\DeclareMathOperator{\Cov}{Cov}
\newcommand{\<}{\langle}
\renewcommand{\>}{\rangle}
\newcommand{\Z}{\mathbb{Z}}
\newcommand{\R}{\mathbb{R}}
\newcommand{\cU}{\mathscr{U}}
\newcommand{\cF}{\mathscr{F}}
\newcommand{\cT}{\mathcal{T}}
\newcommand{\cI}{\mathcal{I}}
\newcommand{\sD}{\mathscr{D}}
\newcommand{\cD}{\mathcal{D}}
\newcommand{\lip}{\text{\rm Lip}}
\renewcommand{\P}{\mathrm{P}}
\newcommand{\E}{\mathrm{E}}
\newcommand{\1}{\boldsymbol{1}}
\renewcommand{\d}{{\rm d}}
\newcommand{\e}{{\rm e}}
\renewcommand{\geq}{\geqslant}
\renewcommand{\leq}{\leqslant}
\renewcommand{\ge}{\geqslant}
\renewcommand{\le}{\leqslant}
\author{Davar Khoshnevisan\\University of Utah
	\and Kunwoo Kim\\POSTECH
	\and Carl Mueller\\University of Rochester
	\and Shang-Yuan Shiu\\National Central University}
\title{\bf Dissipation in parabolic SPDEs\thanks{
	Research supported in part by the NSF grant DMS-1307470 [D.K.], 
	the NRF grant 2017R1C1B1005436 and the TJ Park Science Fellowship [K.K],
        a Simons grant [C.M.], and MOST grant MOST103-2628-M-008-003-MY4 
         [S.-Y. S.].
	Parts of this research were funded by the NSF grant DMS-1440140
	while three of the authors [D.K., K.K., and C.M.]
	were in residence at the Mathematical Sciences Research
	Institute at UC Berkeley in Fall of 2015. Three of us
 	[D.K., K.K., and C.M.] wish to also thank the Banff International 
	Research Center for financial support under the Research in Teams Program.  
}}
\date{Version: March  19, 2019}
\newtheorem{stat}{Statement}[section]
\newtheorem{proposition}[stat]{Proposition}
\newtheorem*{prop}{Proposition}
\newtheorem{theorem}[stat]{Theorem}
\newtheorem{lemma}[stat]{Lemma}
\theoremstyle{definition}
\newtheorem{remark}[stat]{Remark}
\numberwithin{equation}{section}
\begin{document}
\maketitle
\begin{abstract} 
The study of intermittency for the parabolic Anderson problem usually 
focuses on the moments of the solution which can describe the high peaks  in the probability space. In this paper we set up the equation on a finite spatial interval, and study the other part of intermittency, i.e., the part of the probability space on which the solution is close to zero.  This set has probability very close to one, and we show that on this set, the supremum of the solution over space is close to 0.  As a consequence, we find that almost surely the spatial supremum of the solution tends to zero exponentially fast as time increases. We also show that if the noise term is very large, then the probability of the set on which the supremum of the solution is very small has a very high probability.  \\[.1in]

\noindent{\it Keywords:} Intermittency, stochastic partial differential 
equations, white noise, dissipation.\\

	\noindent{\it \noindent AMS 2010 subject classification:}
	Primary: 60H15,  Secondary: 35R60.
\end{abstract}


\section{Introduction, background, and main results}

Consider the solution $u$ to the parabolic stochastic PDE (SPDE, for short),
\begin{equation}\label{SHE}
	\partial_t u =  \partial^2_x u + \sigma(u)\xi
\end{equation}
where $u=u(t\,,x)$, $t>0$,  $x$ lies in the torus $\mathbb{T}:=[-1\,,1]$,
$\sigma:\R\to\R$ is non-random and Lipschitz continuous, and 
$\xi=\xi(t\,,x)$ denotes space-time white noise. The initial 
profile $u_0(x):=u(0,x)$ is assumed to be non-random, and to satisfy
\begin{equation}\label{u_0>0}
	0<\inf_{x\in\mathbb{T}} u_0(x) \le\sup_{x\in\mathbb{T}}u_0(x) <\infty.
\end{equation}
The  Laplace operator $\partial^2_x$ in \eqref{SHE} is endowed with
periodic boundary conditions on $\mathbb{T}$.  

According to the standard theory of SPDEs,  there exists a unique almost 
surely continuous random field $u$ that satisfies
\[
	\sup_{t\in(0,T)}\sup_{x\in\mathbb{T}}
	\E\left(|u(t\,,x)|^k\right)<\infty
	\qquad\text{for all $T>0$ and $k\ge2$},
\]
that solves \eqref{SHE}; see \cite{Dalang,cbms,Walsh}. See also \S\ref{sec:mild} below
for further details. 

In addition, we suppose
that there exist two real numbers $\lip_\sigma\ge{\rm L}_\sigma>0$ such that
\begin{equation}\label{cone}
	{\rm L}_\sigma\le \left| \frac{\sigma(a)}{a}\right|\le \lip_\sigma
	\qquad\text{for every }a\in\R\setminus\{0\}.\footnote{Clearly, 
	\eqref{cone} is equivalent to the
	condition that $\sigma(0) = 0$ and $\inf_{a\neq0}|\sigma(a)/a|>0.$
	Therefore, we can always choose $\lip_\sigma$ to be the Lipschitz
	constant of $\sigma$. This remark also justifies 	the notation for $\lip_\sigma$ in \eqref{cone}.}
\end{equation} 
Because the cone condition \eqref{cone} implies that $\sigma(0)=0$, 
the positivity principle for SPDEs implies that
\[
	\P\left\{ u(t\,,x)>0\text { for every $t\ge0$ and $x\in\mathbb{T}$}\right\}=1;
\]
see \cite{Mueller1}.

One of the interesting properties of \eqref{SHE} is that its solution
is \emph{intermittent} in the sense of \cite{BC, FK}. More precisely, intermittency (or moment intermittency) can be defined  as the property that 
\begin{equation}\label{g:gg:g}
	k\mapsto\frac{\overline\gamma(k)}{k}
	\quad\text{and/or}\quad
	k\mapsto\frac{\underline\gamma(k)}{k}\quad\text{is strictly increasing on 
	$[2\,,\infty)$},
\end{equation}
where $\underline\gamma,
\overline\gamma:[2\,,\infty)\to[-\infty\,,\infty]$ are given by 
\begin{equation*}
	\underline\gamma(k) :=\liminf_{t\to\infty} \frac1t 
	\inf_{x\in\mathbb{T}}\log\E\left( |u(t\,,x)|^k\right)
	\quad\text{and}\quad
	\overline\gamma(k) := \limsup_{t\to\infty} \frac1t 
	\sup_{x\in\mathbb{T}}\log\E\left( |u(t\,,x)|^k\right).
\end{equation*}
Here $\underline\gamma$ and $\overline \gamma$ are called \emph{lower and upper moment Lyapunov exponents} respectively. 

As a result of Jensen's inequality, it is easy to see that both
$k\mapsto \underline\gamma(k)/k$ and 
$k\mapsto \overline\gamma(k)/k$ are monotonically nondecreasing. So the
defining feature of intermittency is the \emph{strictness}
of this monotonicity. Indeed, Jensen's inequality for moments is strict iff 
the random variable is not constant over the probability space. 
In the setting of this paper,  intermittency is implied by the following, more easy-to-check, 
\emph{weak intermittency}
condition:
\begin{equation}\label{WI}
	0<\underline{\gamma}(k) \le \overline{\gamma}(k)<\infty
	\quad\text{for all $k\ge2$};
\end{equation}
see \cite{FK} for the relation between \eqref{g:gg:g} and \eqref{WI} and  also see \cite{FN, KK1, KK2, Nualart, Xie} for the moments and  weak intermittency of the solution $u$ to \eqref{SHE} on bounded intervals with various boundary conditions.   We have set things up so that \eqref{WI} is in fact
equivalent to the strict monotonicity of both
$k\mapsto \underline{\gamma}(k)/k$
and
$k\mapsto \overline{\gamma}(k)/k$.

In order to see intuitively how the moments give information about the peaks
of the solution, suppose $\underline\gamma=\overline\gamma$, and 
call their common value $\gamma$. This means roughly that, for every $k\ge2$,
\begin{equation}\label{interm:wish}
	\E\left( |u(t\,,x)|^k\right)\approx \e^{t\gamma(k)}\qquad\text{when
	$t\gg1$}.
\end{equation}
Because $k\mapsto \gamma(k)/k$ is strictly increasing on $[2\,,\infty)$,
there exist constants $2\le k_1<k_2<\cdots$, all strictly increasing, 
and events $A_1(t),A_2(t),\ldots$ (one for every $t>0$), and constants $C_1,C_2,\ldots>0$
such that:
\begin{enumerate}
	\item[({\bf I.1})] $\P(A_n(t))\le \exp(-C_nt)$ for all $n\ge1$ and $t\gg1$; and
	\item[({\bf I.2})] For all $n\ge1$ and $t\gg1$,	
		$\E\left(|u(t\,,x)|^{k_n}\right) \approx
		\E\left(|u(t\,,x)|^{k_n}\,;\, A_n(t)\right).$
\end{enumerate}
Indeed, by \eqref{g:gg:g} we can find for every $n\ge 1$ real numbers $a_n$
such that
\begin{equation}\label{interm:Molch}
	\frac{\gamma(k_{n-1})}{k_{n-1}} <a_n<
	\frac{\gamma(k_n)}{k_n},
\end{equation}
then set
\[
	A_n(t) := \left\{\omega\in\Omega:\
	\e^{a_n t} \le |u(t\,,x)(\omega)| \right\},
\]
and finally apply Chebyshev's inequality to deduce ({\bf I.1}):
\begin{align*}
	\P\left( A_n(t)\right) &\le \exp\left( -a_{n}k_{n-1}t\right)
		\E\left(|u(t\,,x)|^{k_{n-1}}\right)\\
	&\lessapprox\exp\left( -a_{n}k_{n-1}t + \gamma(k_{n-1})t\right)
		\qquad[\text{see }\eqref{interm:wish}]\\
	&\lessapprox\exp(-C_nt)\ \text{for some $C_n>0$}\
		[\text{see \eqref{interm:Molch}}].
\end{align*}
We deduce ({\bf I.2}) by noticing that
\begin{align*}
	\E\left( |u(t\,,x)|^{k_n}\,;\, \left[
		A_n(t)\right]^c\right) &\le \exp\left( k_na_nt\right)\\
	&\ll \exp\left( \gamma(k_n)t\right)\qquad[\text{see \eqref{interm:Molch}}]\\
	&\approx \E\left(|u(t\,,x)|^{k_n}\right)\quad[\text{see
		\eqref{interm:wish}}].
\end{align*}
From this simple heuristic about the Lyapunov exponents, we learn a good deal about the high peaks
of $u$, namely, that:
\begin{enumerate}
	\item The moments of the solution grow exponentially rapidly  as $t\to\infty$,
		and nearly all of the contribution to the $k_n$-th moment of $u(t\,,x)$
		comes from a small part [$A_n(t)$] of the probability space where
		$u(t\,,x)$ is unduly large; and
	\item The $k_1$-th, $k_2$-th, \dots 
		moments of $u(t\,,x)$ are influenced by decreasing 
		small parts of the underlying probability space.
\end{enumerate}
In other words, the high peaks tend to appear at large times, and
they tend to be highly localized in the probability space. This picture describes one part of ``physical intermittency'' in probability space where physical intermittency usually refers to the property that the solution $u$ tends to develop ``\emph{tall peaks},'' 
``\emph{distributed over
small islands},'' and ``\emph{separated by large areas where $u$ is small} (voids)'' (see \cite{BC,CM,CMS2002,GD,GT,ZMRS1988,ZMRS1985,ZMS,ZTPS}). 

The main goal of the present paper is to study the part of physical intermittency
that does \emph{not} seem to be a natural consequence of conditions
such as \eqref{g:gg:g} or \eqref{WI}. Namely, we currently propose to analyze the
``\emph{voids}'' (the event where $u$ is small). One of the key steps toward this goal is the following
result, which is the counterpart to \eqref{WI}.

\begin{theorem}\label{th:interm1}
There exist $t_0\geq 1$, an event $B(t)$ for every $t\geq t_0$, and constant $c>0$ which is independent of $t$ such that for every $k\geq 2$, there exist $c_{1,k},c_{2,k}>0$ such that: 
\begin{enumerate}
	\item $\P(B(t))\ge 1 - c\,\exp(-c t)$ for all $t\ge t_0$; and
	\item For all $t\ge t_0$,
		\[
			c_{2,k}\e^{-c_{1,k}t}
			\le \E\left( \inf_{x\in\mathbb{T}}
			    |u(t\,,x)|^k\,;\, B(t)\right)
			\le \E\left( \sup_{x\in\mathbb{T}}
			    |u(t\,,x)|^k\,;\, B(t)\right)
			\le c_{1,k}\e^{-c_{2,k}t}.
		\]
\end{enumerate}
\end{theorem}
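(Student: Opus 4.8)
The plan is to build the event $B(t)$ out of a global ``smallness'' event governed by an $L^2(\mathbb{T})$-energy estimate, and to obtain the two-sided moment bounds on $B(t)$ by combining a supermartingale-type (or Gronwall-type) dissipation estimate with the positivity principle already quoted in the excerpt.

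First I would establish the \emph{dissipation engine}: let $E(t) := \|u(t\,,\cdot)\|_{L^2(\mathbb{T})}^2$ and, more generally, track moments of $E(t)$. Using the mild formulation and the heat semigroup $P_t$ on the torus, the key point is that the Dirichlet form $\int_{\mathbb{T}}|\partial_x u|^2$ contributes a spectral-gap dissipation: because $\mathbb{T}=[-1,1]$ is compact, the nonzero Neumann/periodic eigenvalues of $-\partial_x^2$ are bounded below by $\lambda_1>0$, so the deterministic heat flow contracts the mean-zero part of $u$ at rate $\e^{-\lambda_1 t}$, while the conserved total mass $\int_{\mathbb{T}}u(t,x)\,\d x$ is itself a (local) martingale of bounded $L^2$ oscillation per unit time scaled by $\lip_\sigma$. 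Applying It\^o's formula to $E(t)$ (or to $\e^{k(\cdot)}$ of a suitable functional) one gets
\[
    \E\bigl(E(t)^{k/2}\bigr) \le C_k\,\e^{-c_{2,k} t}
\]
provided $\lip_\sigma$ is not too large relative to $\lambda_1$; in the general Lipschitz case one localizes on the event that the total-mass martingale and the energy have not made an atypically large excursion, which is exactly what will define $B(t)$. Concretely, set
\[
    B(t) := \Bigl\{ \textstyle\sup_{s\in[t/2,\,t]} E(s) \le \e^{-c t} \Bigr\}
\]
(perhaps intersected with a comparable control on $\int_{\mathbb{T}} u\,\d x$), and prove $\P(B(t))\ge 1-c\e^{-ct}$ by a maximal/exponential inequality for the energy supermartingale together with Chebyshev — this uses only that $\E(E(t)^p)$ decays exponentially for each fixed $p$, which follows from \eqref{WI}-type estimates run in reverse. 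Then upgrade the $L^2$-in-space smallness on $B(t)$ to $L^\infty$-in-space smallness: by the smoothing property of the stochastic heat equation (writing $u(t,x)=(P_{t/2}u(t/2,\cdot))(x) + \text{stochastic integral over }[t/2,t]$), a Sobolev/heat-kernel bound converts $\sup_{s\in[t/2,t]}\|u(s,\cdot)\|_{L^2}^2 \le \e^{-ct}$ into $\E(\sup_x |u(t,x)|^k\,;\,B(t)) \le c_{1,k}\e^{-c_{2,k}t}$ after adjusting constants; this is where one pays a factor from the Burkholder–Davis–Gundy and Kolmogorov-continuity machinery, but only polynomially in $t$, which is absorbed into the exponential.

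For the matching \emph{lower} bound $c_{2,k}\e^{-c_{1,k}t}\le \E(\inf_x|u(t,x)|^k\,;\,B(t))$, I would argue as follows: the total mass $M(t):=\int_{\mathbb{T}}u(t,x)\,\d x$ is a nonnegative (by the positivity principle) local martingale started from $M(0)\asymp 1$, hence it converges a.s.\ and, being $L^2$-bounded on $B(t)$, it stays bounded below by a constant on an event of probability bounded away from $0$; intersecting with $B(t)$ and using $\inf_x u(t,x) \ge \frac12\int_{\mathbb{T}}u(t,x)\,\d x - \text{osc}_x u(t,\cdot)$ together with the oscillation control (again from the energy bound plus heat smoothing, which forces $u(t,\cdot)$ to be nearly spatially constant on $B(t)$ for large $t$) gives $\inf_x u(t,x) \gtrsim \e^{-c_{1,k}t}$ on a further sub-event of $B(t)$ of the right probability. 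One then integrates $|u|^k$ over that sub-event. The arithmetic here is: on $B(t)$, $u(t,\cdot)$ is within $\e^{-ct/2}$ of its spatial average in sup-norm, and that average, while it may decay, cannot decay faster than exponentially because $M(t)$ is a martingale and $\E(M(t))=\E(M(0))$ — so it is only the localization to $B(t)$ (where we have cut off the rare large values, not the typical small ones) that produces the mild exponential loss $\e^{-c_{1,k}t}$ rather than a constant.

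The main obstacle I anticipate is making the ``$u$ is nearly spatially constant on $B(t)$'' step quantitative and uniform in $k$: one needs the heat smoothing to beat the noise input on the time window $[t/2,t]$ \emph{conditionally} on the energy being small, and the stochastic integral term is not independent of the event $B(t)$. The clean way around this is to avoid conditioning altogether — instead prove the unconditional moment decay $\sup_x\E(|u(t,x)|^k)\le C_k\e^{-c_k t}$ directly (this is the genuinely new analytic input, and is plausible precisely because on the bounded torus the spectral gap dominates the noise for the $L^\infty$ norm, unlike on $\R$ where \eqref{WI} holds with positive exponents) and then read off Theorem \ref{th:interm1} almost formally: the upper bound in (2) is then immediate even without restricting to $B(t)$, $B(t)$ itself is defined by a Chebyshev cutoff on $\sup_x|u(t,x)|$, and the lower bound in (2) comes from the positivity principle plus the fact that $\E(\inf_x|u(t,x)|^k)$ cannot itself decay super-exponentially (compare with $\E(M(t)^k)\ge \E(M(t))^k = \E(M(0))^k$). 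So the real work is the new estimate $\sup_x\E(|u(t,x)|^k)\to 0$ exponentially, and the delicate point there is controlling the constant's dependence on $k$ and on the gap between $\lip_\sigma$ and $\lambda_1=\pi^2/4$ (or whatever the periodic gap on $[-1,1]$ is), likely via a Gronwall argument on $t\mapsto \sup_x\E(|u(t,x)|^k)$ using the explicit torus heat kernel and the Lipschitz bound $|\sigma(u)|\le\lip_\sigma|u|$.
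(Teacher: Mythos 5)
Your proposal rests on a ``dissipation engine'' that is false in the setting of this paper. You propose to show that $\E\bigl(\|u(t,\cdot)\|_{L^2}^2\bigr)$, and ultimately $\sup_x\E(|u(t,x)|^k)$, decay exponentially in $t$. But the cone condition \eqref{cone} with ${\rm L}_\sigma>0$ forces weak intermittency \eqref{WI}: the second and higher moments \emph{grow} exponentially (this is the premise of the whole paper, and is why the restriction to $B(t)$ in part (2) of the theorem is not removable). Concretely, $M_t=\int_{\mathbb{T}}u(t,x)\,\d x$ is a martingale, so $\E(M_t)=M_0$ and, by Cauchy--Schwarz, $\E\bigl(\|u(t,\cdot)\|_{L^2}^2\bigr)\ge \tfrac12\E(M_t^2)\ge\tfrac12 M_0^2>0$ for all $t$; there is no spectral-gap contraction to exploit, because the constant mode has eigenvalue zero and the It\^o correction $\int[\sigma(u)]^2\ge {\rm L}_\sigma^2\int u^2$ pushes the energy up. Your hedge ``provided $\lip_\sigma$ is not too large relative to $\lambda_1$'' is not available: the theorem carries no such hypothesis. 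The paper instead exploits that the $L^1$ norm is an \emph{exact positive martingale} $M_t=M_0\exp(N_t-\tfrac12\<N\>_t)$ with $\<N\>_t\ge \tfrac12\lambda^2{\rm L}_\sigma^2 t$ (Proposition \ref{pr:M:small}), so that $M_t$ decays exponentially pathwise off an event of exponentially small probability; this defines $A_1(t)$. The conversion from $L^1$-smallness to $L^\infty$-smallness is then done by the $L^1/L^\infty$ interpolation inequality (Propositions \ref{pr:improved} and \ref{pr:improved:x}) applied \emph{conditionally} via the Markov property at time $t$ --- precisely the conditioning issue you flag as an obstacle and then try to avoid by proving the (false) unconditional decay.

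Your lower-bound argument is also internally inconsistent. You want a sub-event of $B(t)$ on which the total mass stays bounded below by a constant; but on any reasonable $B(t)$ the sup-norm of $u(t,\cdot)$ is exponentially small, hence so is $M_t\le 2\sup_x u(t,x)$, so that sub-event is empty for large $t$. The comparison $\E(M_t^k)\ge M_0^k$ says nothing about $\E(\inf_x|u(t,x)|^k\,;B(t))$, since the unrestricted moments are carried by the rare tall peaks that $B(t)$ excludes. The paper's lower bound requires a separate pathwise mechanism: stopping times $T_n$ at which $\inf_x u$ drops by a factor of $\e$, the strong Markov property, a comparison theorem, and a scaling of $\sigma$ to show $\P(T_{n+1}-T_n>\tau\mid\cF_{T_n})\ge\tfrac12$, whence (via the dependent-Bernoulli large-deviations Lemma \ref{lem:LD}) $\inf_{x}\inf_{s\le t}u(s,x)\ge \e^{-4t/\tau}\underline{u}_0$ on an event $A_2(t)$ with $\P(A_2(t))\ge 1-\e^{-t/4\tau}$; the theorem's $B(t)$ is $A_1(t)\cap A_2(t)$. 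None of this machinery appears in your proposal, so both halves of part (2) remain unproved.
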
 (The proof of this is in Section 8.)

In other words, we see from Theorem \ref{th:interm1} the following
property which contrasts with the earlier discussion about
moment intermittency and its consequences: 
For large values of $t$, only a tiny part of the probability space contributes to the 
moments of $u(t\,,x)$.  
In some sense, this property and moment intermittency
give us a complete mathematical description of the 	``physical intermittency''
of the solution $u$ in probability space.

In this connection, let us also mention a more precise
result. The following 
is a non-trivial pathwise variation of Theorem \ref{th:interm1},
which gives precise bounds on the a.s.\ dissipation of the solution to
\eqref{SHE}, viewed as the solution to a semi-linear heat-flow problem in 
the random environment $\xi$.
	
\begin{theorem}\label{th:decay:t}
	With probability one, 
	\[
		-\infty<\liminf_{t\to\infty} \frac1t \log \inf_{x\in\mathbb{T}}u(t\,,x)
		\le \limsup_{t\to\infty}\frac1t \log\sup_{x\in\mathbb{T}} u(t\,,x)<0.
	\]
	In particular, the positive random variable
	$\sup_{x\in\mathbb{T}}u(t\,,x)$ converges 
	a.s.\ to zero [fast] as $t\to\infty$.
\end{theorem}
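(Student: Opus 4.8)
The plan is to bootstrap from the moment control in Theorem \ref{th:interm1} to an almost sure statement via a Borel--Cantelli argument along an integer sequence of times, followed by a chaining/continuity argument to fill in the gaps between consecutive integer times. For the \emph{upper bound} $\limsup_{t\to\infty} t^{-1}\log\sup_x u(t,x)<0$, first I would fix a large even integer $k$ and use part (2) of Theorem \ref{th:interm1} together with part (1): writing $M(t):=\sup_{x\in\mathbb T}u(t,x)$, we have on the event $B(t)$ the moment bound $\E(M(t)^k; B(t))\le c_{1,k}\e^{-c_{2,k}t}$, while $\P(B(t)^c)\le c\,\e^{-ct}$ decays exponentially. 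Applying Chebyshev on $B(t)$ gives $\P\{M(t)\ge \e^{-\beta t}\}\le \P(B(t)^c)+c_{1,k}\e^{(k\beta - c_{2,k})t}$ for any $\beta>0$; choosing $\beta<c_{2,k}/k$ (and also $\beta<c$) makes the right-hand side summable over $t=n\in\N$, so by Borel--Cantelli, a.s.\ $M(n)\le \e^{-\beta n}$ for all large $n$. This handles integer times; it is here that the hypotheses of the theorem are really used.

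To pass from integer times to all real times I would use the regularity theory for \eqref{SHE}: standard moment estimates (as in \cite{Walsh,cbms,Dalang}) give, for $s,t\in[n,n+1]$ and $x,y\in\mathbb T$, bounds of the form $\E(|u(t,x)-u(s,y)|^k)\le C_k(|t-s|^{k/4}+|x-y|^{k/2})\sup_{r\in[0,n+1]}\sup_z\E(|u(r,z)|^k)$, and one must control that last supremum. The cleanest route is to replace $u$ on the time-window $[n,n+1]$ by the solution $v_n$ of \eqref{SHE} started at time $n$ from the (random) datum $u(n,\cdot)$; by the Markov property and a Grönwall-type $L^k$ estimate, $\sup_{t\in[n,n+1]}\sup_x\E(|v_n(t,x)|^k\mid \mathcal F_n)\le C_k\,\|u(n,\cdot)\|_{L^\infty(\mathbb T)}^k$, and on the good event $\|u(n,\cdot)\|_{L^\infty}\le\e^{-\beta n}$. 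Then a Kolmogorov-continuity / maximal-inequality argument inside each unit window, combined with Borel--Cantelli again, upgrades $M(n)\le \e^{-\beta n}$ to $\sup_{t\in[n,n+1]}M(t)\le \e^{-\beta' n}$ for some slightly smaller $\beta'>0$, giving the desired $\limsup_{t\to\infty}t^{-1}\log M(t)\le -\beta'<0$ a.s.

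For the \emph{lower bound} $\liminf_{t\to\infty}t^{-1}\log\inf_x u(t,x)>-\infty$, I would argue symmetrically using the left-hand inequality in part (2) of Theorem \ref{th:interm1}, but now the direction of Chebyshev is reversed, so a second-moment (Paley--Zygmund) argument is the natural tool: from $\E(\inf_x u(t,x)^k;B(t))\ge c_{2,k}\e^{-c_{1,k}t}$ together with an upper bound on $\E(\sup_x u(t,x)^{2k};B(t))$ (again from part (2)), Paley--Zygmund yields $\P\{\inf_x u(t,x)\ge \tfrac12 c\,\e^{-c_{1,k}t}\}\ge$ a quantity bounded below, say, by some $\delta>0$ — but a fixed positive lower bound is not enough for Borel--Cantelli. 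To get an a.s.\ statement I would instead exploit independence across well-separated time blocks: using the Markov property of $u$, the events $\{\inf_x u(n,\cdot)\ge \e^{-C n}\}$ for $n$ along a sparse subsequence are, after conditioning, "almost independent," so a conditional Borel--Cantelli (Lévy's extension) argument shows infinitely many of them occur a.s.; combined with the positivity principle $u>0$ everywhere and a short-time lower bound propagating the estimate across the intervening gaps, this gives $\inf_x u(t,x)\ge \e^{-Ct}$ for all large $t$ on a full-probability set. The main obstacle is exactly this last point: converting the one-sided moment lower bound into an almost sure lower bound, since Chebyshev/Borel--Cantelli only controls upper deviations directly; handling it requires care with the block-independence structure and with propagating a lower bound on $u$ through a unit time interval without it collapsing. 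The upper-bound half, by contrast, is essentially a routine (if technical) Borel--Cantelli-plus-continuity exercise once Theorem \ref{th:interm1} is in hand.
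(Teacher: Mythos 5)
Your upper-bound half is essentially sound and close in spirit to what the paper does: the paper controls $\sup_{x}\sup_{s\in[t+1,(1+\varepsilon^3)(t+1)]}u(s,x;\lambda)$ on the high-probability event $\mathbf{A}(t;\lambda)=\{\|u(t,\cdot)\|_{L^1}\le\|u_0\|_{L^1}\e^{-\lambda^2\Lp_\sigma^2t/8}\}$ via the conditional form of the $L^1/L^\infty$ interpolation (Proposition \ref{pr:improved}) and then runs Borel--Cantelli over exponentially spaced time blocks. One structural caveat: the paper derives Theorem \ref{th:interm1} \emph{from} the proof of Theorem \ref{th:decay:t} (its proof in Section 8 reuses \eqref{oh:boy} and \eqref{Ltail}), so taking Theorem \ref{th:interm1} as the input is circular within the paper's logic; you would need to go back to Propositions \ref{pr:M:small} and \ref{pr:improved} directly, which is what your argument implicitly does anyway.

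The genuine gap is in the lower bound, and your own diagnosis of the obstacle is correct but your proposed fix does not overcome it. To get $\liminf_{t\to\infty}t^{-1}\log\inf_x u(t,x)>-\infty$ a.s.\ you need an \emph{eventually-always} statement: for some $C$, all but finitely many of the bad events $\{\inf_x\inf_{t\le T_N} u(t,x)<\e^{-CT_N}\}$ must fail, which via the first Borel--Cantelli lemma requires a summable (in practice exponentially small) \emph{upper} bound on their probabilities. A Paley--Zygmund bound gives only a probability bounded below by a constant, and a conditional second Borel--Cantelli/L\'evy-extension argument along sparse blocks yields only that the good events occur \emph{infinitely often} -- that controls a limsup along a subsequence, not the liminf over all $t$, and no amount of "almost independence" converts it into the required tail estimate. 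The paper's mechanism is different and is the essential missing idea: define stopping times $T_0=0$ and $T_{n+1}=\inf\{t>T_n:\inf_x u(t,x)<\e^{-1}\inf_x u(T_n,x)\}$, use the strong Markov property, a comparison theorem, and the scaling observation that $w^{(n)}/w^{(n)}_0(0)$ solves the same SPDE with a nonlinearity $\sigma^{(n)}$ having the \emph{same} Lipschitz constant, to prove $\P(T_{n+1}-T_n>\tau\mid\cF_{T_n})\ge\tfrac12$ for a fixed $\tau>0$ uniformly in $n$; then Lemma \ref{lem:LD} (large deviations for dependent Bernoulli variables) gives $\P\{T_n\le\tau n/4\}\le\e^{-n/16}$, i.e.\ the exponentially small tail
\[
\P\Bigl\{\inf_{x\in\mathbb{T}}\inf_{0\le t\le\tau n/4}u(t,x)\le\e^{-n}\underline{u}_0\Bigr\}\le\e^{-n/16},
\]
which is summable and closes the Borel--Cantelli argument. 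Note also that this construction automatically handles the propagation of the lower bound between sampling times (since the event concerns $\inf_{t\le\tau n/4}$), a point your sketch leaves open. Without something playing the role of this stopping-time/renewal structure, the lower-bound half of your proposal does not go through.
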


Our analysis of Theorem \ref{th:interm1} and Theorem \ref{th:decay:t} hinges on a novel $L^1/L^\infty$
interpolation inequality, see Proposition \ref{pr:improved},
which is interesting in its own right.  Roughly speaking, we can 
control the supremum of our solution by its $L^1$ norm, and we 
can show using martingale arguments that, with high probability, 
the $L^1$ norm declines exponentially.  

Our analysis has other consequences too. 
For example, we can describe
the system \eqref{SHE} in the ``high-noise''
setting. That is, consider the SPDE \eqref{SHE} where we replace
$\sigma(u)$ by $\lambda\sigma(u)$ for a large constant $\lambda>0$,
as follows:
\begin{equation}\label{SHE-lambda}
	\partial_t u(t\,,x\,;\lambda) = \tfrac12 \partial^2_x u
	(t\,,x\,;\lambda) + \lambda\sigma(u(t\,,x\,;\lambda))\xi(t\,,x),
\end{equation}
with periodic boundary conditions on $[-1\,,1]$ and initial value $u_0$,
as before.
In other words, we simply replace the function $\sigma$ by
$\lambda\sigma$, and add $\lambda$ to the notation for $u$
to help keep better track of this change. Since $\lambda\sigma$ is also Lipschitz continuous
and satisfies \eqref{cone}, all of this is merely recording a change in the notation.

Now we can state a
result about the large-noise behavior of
the solution to \eqref{SHE}, equivalently the large-$\lambda$ behavior
of the solution to \eqref{SHE-lambda}. Roughly speaking, the following theorem states
that if the level $\lambda$ of the noise is high then voids take over
rapidly, with very high probability. More precisely, we have

\begin{theorem}[Large-noise regime]\label{th:lambda:UB}
	For every $t>0$,
	\[
		\limsup_{\lambda\uparrow\infty}\frac{1}{\lambda^2}
		\log \P\left\{ \sup_{x\in\mathbb{T}}u(t\,,x\,;\lambda) > 
		\exp\left(-\frac{{\rm L}_\sigma^2\lambda^2 t}{64}
		\right)\right\}\le-\frac{{\rm L}_\sigma^2t}{64}.
	\]
	In particular, for every $t>0$ fixed,  the positive random variable
	$\sup_{x\in\mathbb{T}}u(t\,,x\,;\lambda)$ converges 
	in probability to zero [fast] as $\lambda\to\infty$.
\end{theorem}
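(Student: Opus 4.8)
The plan is to reduce the statement about $\sup_{x}u(t\,,x\,;\lambda)$ to one about the total mass $M_\lambda(t):=\int_{\mathbb{T}}u(t\,,x\,;\lambda)\,\d x$. Integrating \eqref{SHE-lambda} over the torus kills the Laplacian term (periodic boundary conditions), so that
\[
	M_\lambda(t)=\int_{\mathbb{T}}u_0(x)\,\d x
	+\lambda\int_0^t\!\!\int_{\mathbb{T}}\sigma\big(u(s\,,y\,;\lambda)\big)\,\xi(\d s\,\d y),
\]
which is a strictly positive continuous martingale (by the positivity principle and the uniform moment bounds) whose initial value $M_\lambda(0)=\int_{\mathbb{T}}u_0$ is a fixed number in $(0\,,\infty)$ thanks to \eqref{u_0>0}. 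The point of passing to $M_\lambda$ is that its quadratic variation is forced to be large: by the cone condition \eqref{cone} followed by the Cauchy--Schwarz inequality on $\mathbb{T}=[-1\,,1]$,
\[
	\<M_\lambda\>_t=\lambda^2\!\int_0^t\!\!\int_{\mathbb{T}}\sigma\big(u(s\,,y\,;\lambda)\big)^2\,\d y\,\d s
	\ge{\rm L}_\sigma^2\lambda^2\!\int_0^t\!\!\int_{\mathbb{T}}u(s\,,y\,;\lambda)^2\,\d y\,\d s
	\ge\frac{{\rm L}_\sigma^2\lambda^2}{2}\int_0^t M_\lambda(s)^2\,\d s.
\]

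\emph{Step 1: the total mass is exponentially small with overwhelming probability.} Because $M_\lambda>0$ we may form $L(t):=\int_0^t M_\lambda(s)^{-1}\,\d M_\lambda(s)$; It\^o's formula gives $L(t)=\log\!\big(M_\lambda(t)/M_\lambda(0)\big)+\tfrac12\<L\>_t$, and the previous display shows $\<L\>_t=\int_0^t M_\lambda(s)^{-2}\,\d\<M_\lambda\>_s\ge A:=\tfrac12{\rm L}_\sigma^2\lambda^2 t$. For each $\theta\in(0\,,1)$ the Dol\'eans--Dade exponential $\exp\!\big(\theta L(t)-\tfrac12\theta^2\<L\>_t\big)=\big(M_\lambda(t)/M_\lambda(0)\big)^\theta\exp\!\big(\tfrac12\theta(1-\theta)\<L\>_t\big)$ is a nonnegative supermartingale of mean $\le1$; combining this with $\<L\>_t\ge A$ and Markov's inequality yields, for every $\gamma>0$,
\[
	\P\big\{M_\lambda(t)>\e^{-\gamma}\big\}\le
	\exp\!\Big(\theta\gamma+\theta\log M_\lambda(0)-\tfrac12\theta(1-\theta)A\Big),
\]
and minimizing over $\theta\in(0\,,1)$ gives $\P\{M_\lambda(t)>\e^{-\gamma}\}\le\exp\!\big(-(A/2-\gamma-\log M_\lambda(0))^2/(2A)\big)$ whenever the bracket is positive. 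Choosing $\gamma=\tfrac{{\rm L}_\sigma^2\lambda^2t}{64}=A/32$, the right-hand exponent equals $-\tfrac{225}{2048}A$ up to lower-order terms, hence is $\le-A/32=-\tfrac{{\rm L}_\sigma^2\lambda^2t}{64}$ for all large $\lambda$ --- in fact with a strict surplus, which is the slack that Step 2 will consume.

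\emph{Step 2: passing from $L^1$ to $L^\infty$.} Here Proposition~\ref{pr:improved} is used: it bounds $\sup_{x\in\mathbb{T}}u(t\,,x\,;\lambda)$ by the $L^1(\mathbb{T})$ norm of $u$ over a short preceding time window, at the cost of a factor that is at worst polynomial in $\lambda$ and $t$ (and, if the inequality involves an auxiliary regularity term, up to an event whose complement has probability at most $\exp(-c\lambda^2 t)$ for a large constant $c$, which is harmless). On the scale $\e^{-\mathrm{const}\cdot\lambda^2t}$ a polynomial factor is negligible, so Proposition~\ref{pr:improved} together with Step 1 --- and a union bound over the short window, again costing only a polynomial factor --- turns ``$M_\lambda$ small'' into ``$\sup_x u(t\,,\cdot\,;\lambda)$ small'' at the same exponential rate. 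Requiring the threshold inside the probability to agree with the rate of the bound forces a single constant, which the Step 1 computation pins at $\tfrac{{\rm L}_\sigma^2}{64}$; the final convergence-in-probability claim is then immediate, since $\e^{-{\rm L}_\sigma^2\lambda^2t/64}\downarrow0$ as $\lambda\uparrow\infty$ for every fixed $t>0$.

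\emph{Main obstacle.} The real work is Step 2; Step 1 is essentially classical once one notices the quadratic-variation lower bound. The naive approach --- estimating the stochastic-convolution term of the mild formulation by its moments --- is hopeless here, because after the substitution $\sigma\mapsto\lambda\sigma$ the moments of $u(t\,,x\,;\lambda)$ grow at least like $\exp(\mathrm{const}_k\,\lambda^2 t)$, far too fast to beat the threshold $\e^{-{\rm L}_\sigma^2\lambda^2t/64}$. That is exactly why the refined $L^1/L^\infty$ interpolation inequality of Proposition~\ref{pr:improved} is indispensable, and the delicate point is to run it --- together with the martingale estimate of Step 1 --- with $\lambda\sigma$ in place of $\sigma$, keeping track of every $\lambda$-dependence and checking that the loss it introduces does not erode the constant $\tfrac1{64}$.
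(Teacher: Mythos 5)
Your overall architecture is the paper's: an exponential-martingale estimate showing that the total mass $\|u(t,\cdot\,;\lambda)\|_{L^1}$ is below $\exp(-{\rm L}_\sigma^2\lambda^2 t/8)$ except on an event of probability at most $\exp(-{\rm L}_\sigma^2\lambda^2 t/64)$, followed by the $L^1/L^\infty$ interpolation to pass to the supremum. Your Step 1 is correct and complete; the Dol\'eans--Dade supermartingale computation is a clean alternative to the paper's route through Lemma \ref{lem:MG} (time change plus the reflection principle) and recovers exactly the $\varepsilon=\nicefrac12$ case of Proposition \ref{pr:M:small} that the theorem needs.

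Step 2, however, is where the theorem actually lives, and it contains a genuine gap. Proposition \ref{pr:improved} (or \ref{pr:improved:x}) does not cost ``a factor at worst polynomial in $\lambda$ and $t$'': its prefactor is $\exp(C_2 k^3\lambda^4 t/\varepsilon^2)$, which for a fixed interpolation exponent $\varepsilon$ and a window of fixed length grows like $e^{\mathrm{const}\cdot\lambda^4 t}$ and therefore overwhelms the gain $e^{-{\rm L}_\sigma^2\lambda^2 t/8}$ from Step 1. The argument only closes because one (i) applies the interpolation over a window of length $h$ starting at time $t$, conditionally on $\cF_t$ via the Markov property (the proposition is stated for deterministic initial data, so this conditioning step is not optional); (ii) is then left with a term $\E\bigl(\|u(t,\cdot\,;\lambda)\|_{L^\infty}^{2\varepsilon}\bigr)$ that requires Jensen's inequality plus a second application of the moment bound, contributing a further factor $e^{C\varepsilon\lambda^4 t}$; and (iii) chooses $\varepsilon\asymp\lambda^{-2}$ and $h\asymp\lambda^{-6}t$ so that both exponents $C\lambda^4 h/\varepsilon^2$ and $C\varepsilon\lambda^4 t$ collapse to ${\rm L}_\sigma^2\lambda^2 t/32$ each, leaving a net rate $-{\rm L}_\sigma^2\lambda^2 t/16$ before Chebyshev. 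None of this bookkeeping appears in your proposal, and it cannot be dismissed as negligible on the scale $e^{-c\lambda^2 t}$: with the wrong choice of $\varepsilon$ or $h$ the positive exponent dominates and the bound is vacuous. (Smaller inaccuracies point the same way: there is no ``union bound over the short window'' in the correct argument, the interpolation inequality is a pure moment bound with no attached exceptional event, and one must at the end replace $t$ by $t-h$ to state the conclusion at time $t$ rather than $t+h$.)
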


We conclude the Introduction by setting forth some notation that will be
used throughout the paper.

In order to simplify some of the formulas, we distinguish between the spaces
$L^k(\mathbb{T})$ and $L^k(\P)$ by writing the former as 
\[
	L^k := L^k(\mathbb{T})\qquad [1\le k<\infty].
\]
Thus, for example, if $f\in L^k$ for some $1\le k<\infty$, then
$\|f\|_{L^k} = [ \int_{-1}^1 |f(x)|^k\,\d x]^{1/k}.$
We will abuse notation slightly and write
$\|f\|_{L^\infty}:= \sup_{x\in\mathbb{T}}|f(x)|,$
in place of the more customary
essential supremum. 
The $L^k(\P)$-norm
of a random variable $Z\in L^k(\P)$ is denoted by
$\|Z\|_k := \{ \E\left(|Z|^k\right)\}^{1/k}$
for all $1\le k<\infty$.


\section{The mild solution}\label{sec:mild}
Consider the SPDEs \eqref{SHE} and \eqref{SHE-lambda}. Because
$u(t\,,x)=u(t\,,x\,;1)$, it suffices to consider only the SPDE
\eqref{SHE-lambda} for a general $\lambda>0$. We shall do so tacitly
from here on.

Let $W=\{W(t\,,x)\}_{t\ge0,x\in\mathbb{T}}$ denote a two-parameter
Brownian sheet; that is, $W$ is a two-parameter, centered, generalized Gaussian
random field with 
\[
	\Cov\left[ W(t\,,x) \,,\,W(s\,,y)\right] = \min(s\,,t)\min(x\,,y)
	\qquad\text{for all $s,t\ge0$ and $x,y\in\mathbb{T}$}.
\]
It is well known (see \cite[Theorem 1.1]{Walsh}) that $W$ has continuous 
trajectories (up to a modification).  Therefore,
\[
	\xi(t\,,x) = \partial_t\partial_x W(t\,,x)
\]
exists as a generalized random function. This $\xi$ is space-time
white noise, and was mentioned already in the Introduction.

Let $(\tau\,;x\,,y)\mapsto p_{\tau}(x\,,y)$ denote the fundamental
solution to the heat operator $\partial_t -\partial_x^2$ on $(0\,,\infty)\times\mathbb{T}$
with periodic boundary conditions and initial data $p_0(x,y)=\delta(x-y)$, 
where $\delta$ is the Dirac delta function. That is,
\begin{equation}\label{eq:p}
	p_\tau(x\,,y) := \sum_{n=-\infty}^\infty G_\tau(x-y+2n)\qquad
	[\tau>0,\, x,y\in\mathbb{T}],
\end{equation}
where $G$ is the heat kernel in free space; that is,
\begin{equation}\label{eq:G}
	G_\tau(a) := (4\pi\tau)^{-1/2}\exp\left( - \frac{a^2}{4\tau}\right)
	\qquad\text{for all $\tau>0$ and $a\in\R$.}
\end{equation}
Also, let $\{P_t\}_{t\ge0}$ denote the corresponding heat 
semigroup. That is, $P_0f:=f$ for every measurable and bounded function 
$f:\mathbb{T}\to\R_+$,  and
\begin{equation}\label{P_tf}
	(P_tf)(x) := \int_{-1}^1 p_t(x\,,y)f(y)\,\d y,
\end{equation}
for all $t>0$ and $x\in\mathbb{T}$.

With the preceding notation in place,
we then follow Walsh \cite[Chapter 3]{Walsh}
and interpret \eqref{SHE-lambda} in mild/integral form as follows:
\begin{equation}\label{mild}
	u(t\,,x\,;\lambda) = (P_tu_0)(x) +  \cI_t(x\,;\lambda),
\end{equation}
where $\mathcal{I}$ is defined pointwise as the Walsh stochastic integral,
\begin{equation}\label{I}
	\cI_t(x\,;\lambda) := \lambda\int_{(0,t)\times\mathbb{T}} 
	p_{t-s}(x\,,y)\sigma(u(s\,,y\,;\lambda))\, W(\d s\,\d y),
\end{equation}
for every $t,\lambda>0$ and $x\in\mathbb{T}$.
As was mentioned in the Introduction (for $\lambda=1$),
it is well known \cite[Th.\ 3.2, p.\  313, and Cor.\ 3.4, p.\ 318]{Walsh} that there exists a unique
weak solution of \eqref{SHE-lambda} that is continuous and satisfies \eqref{mild},
as well as the following moment condition:
\begin{equation}\label{walsh:moments}
	\sup_{\substack{x\in\mathbb{T}\\t\in(0,T]}}
	\E\left(|u(t\,,x\,;\lambda)|^k\right)<\infty
	\qquad[0<T<\infty,\ 1\le k<\infty].
\end{equation}
Moreover, for every $\lambda>0$,
\begin{equation}\label{positive}
	\P\left\{ u(t\,,x\,;\lambda)> 0\ \text{for all $t\ge0$ and $x\in\mathbb{T}$}\right\}=1.
\end{equation}
In the case that $\sigma(z):=\text{const}\cdot z$ for all $z\in\R$, this follows
from Theorem 1 of Mueller \cite{Mueller1}. The general case follows by making
modifications to the proof of that theorem; see the proof of Theorem 1.7 of
Conus et al \cite{CJK}.


\section{The total mass process}

We may integrate both sides of \eqref{mild} $[\d x]$ in order to see that
\begin{equation}\label{M:MG}
	\|u(t\,,\cdot\,;\lambda)\|_{L^1}= \|u_0\|_{L^1} + \lambda
	\int_{(0,t]\times\mathbb{T}} \sigma(u(s\,,y\,;\lambda))\, W(\d s\,\d  y)
	\qquad[t\ge0].
\end{equation}
The interchange of the integrals is justified by an appeal to a stochastic
Fubini theorem \cite[Th.\ 2.6, p.\ 296]{Walsh}. Thus, it follows from \eqref{cone}, 
\eqref{walsh:moments}, \eqref{positive} and \eqref{M:MG} that 
$t\mapsto\|u(t\,,\cdot\,;\lambda)\|_{L^1}$ defines a positive, continuous, $L^2$-martingale.
The following result
ensures that the said martingale decays  exponentially
rapidly at rate not less than a fixed multiple of $\lambda^2$.

\begin{proposition}\label{pr:M:small}
	For every $t,\lambda>0$ and $\varepsilon\in(0\,,1)$,
	\[
		\P\left\{\|u(s\,,\cdot\,;\lambda)\|_{L^1}
		\ge \|u_0\|_{L^1}\exp\left(-\frac{(1-\varepsilon)
		\lambda^2{\rm L}_\sigma^2 s}{4}\right)
		\text{ for some $s\ge t$}\right\} \le 
		\exp\left( -\frac{\varepsilon^2\lambda^2{\rm L}_\sigma^2t}{16}\right).
	\]
\end{proposition}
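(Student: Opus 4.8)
The plan is to work directly with the total-mass process $M(s):=\|u(s\,,\cdot\,;\lambda)\|_{L^1}$, which by \eqref{M:MG} and the remarks following it is a nonnegative, continuous $L^2$-martingale with $M(0)=\|u_0\|_{L^1}$, strictly positive by \eqref{u_0>0}, and with quadratic variation
\[
	\langle M\rangle_s = \lambda^2\int_0^s \bigl\|\sigma\bigl(u(r\,,\cdot\,;\lambda)\bigr)\bigr\|_{L^2}^2\,\d r .
\]
The first and most important step is to turn the lower cone bound in \eqref{cone} into a \emph{self-bounding} lower estimate for $\d\langle M\rangle_s$: since $|\sigma(a)|\ge{\rm L}_\sigma|a|$ and, by the Cauchy--Schwarz inequality on $\mathbb{T}=[-1\,,1]$ (an interval of length $2$), $\|f\|_{L^1}^2\le 2\|f\|_{L^2}^2$, one obtains
\[
	\bigl\|\sigma\bigl(u(r\,,\cdot)\bigr)\bigr\|_{L^2}^2 \ge {\rm L}_\sigma^2\bigl\|u(r\,,\cdot)\bigr\|_{L^2}^2 \ge \tfrac12{\rm L}_\sigma^2 M(r)^2,
	\qquad\text{hence}\qquad
	\d\langle M\rangle_s\ \ge\ \rho\,M(s)^2\,\d s
\]
with $\rho:=\tfrac12\lambda^2{\rm L}_\sigma^2$. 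Everything after this is a fairly routine martingale estimate.

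Next I would fix $\beta\in(0\,,1)$ and apply Itô's formula to $s\mapsto M(s)^\beta$; this is legitimate because $M$ is continuous and a.s.\ strictly positive by \eqref{positive}, so one may localize along the stopping times $T_n:=\inf\{s\ge0:\ M(s)\le 1/n\}$, on which $x\mapsto x^\beta$ is smooth. The drift of $M^\beta$ equals $\tfrac12\beta(\beta-1)M_s^{\beta-2}\,\d\langle M\rangle_s$, which by the previous display is $\le-\kappa M_s^\beta\,\d s$ with $\kappa:=\tfrac12\rho\beta(1-\beta)>0$. Hence $X(s):=\e^{\kappa s}M(s)^\beta$ has nonpositive drift, so it is a nonnegative local supermartingale; since $M(s)\in\bigcap_{k\ge1}L^k(\P)$ by \eqref{walsh:moments}, it is in fact a genuine nonnegative supermartingale with $X(0)=\|u_0\|_{L^1}^\beta$.

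The last step is the deviation bound. Assume $\beta<\varepsilon$. On the event that $M(s)\ge\|u_0\|_{L^1}\exp(-(1-\varepsilon)\lambda^2{\rm L}_\sigma^2s/4)=\|u_0\|_{L^1}\exp(-(1-\varepsilon)\rho s/2)$ for some $s\ge t$, raising to the $\beta$-th power and multiplying by $\e^{\kappa s}$ yields, at that value of $s$,
\[
	X(s)\ \ge\ \|u_0\|_{L^1}^\beta\,\exp\!\Bigl[\bigl(\kappa-\tfrac12(1-\varepsilon)\rho\beta\bigr)s\Bigr]
	\ =\ X(0)\,\exp\!\Bigl[\tfrac12\rho\beta(\varepsilon-\beta)\,s\Bigr]
	\ \ge\ X(0)\,\exp\!\Bigl[\tfrac12\rho\beta(\varepsilon-\beta)\,t\Bigr],
\]
because $\kappa-\tfrac12(1-\varepsilon)\rho\beta=\tfrac12\rho\beta(\varepsilon-\beta)>0$. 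Thus this event is contained in $\bigl\{\sup_{s\ge0}X(s)\ge X(0)\exp[\tfrac12\rho\beta(\varepsilon-\beta)t]\bigr\}$, whose probability is at most $\exp[-\tfrac12\rho\beta(\varepsilon-\beta)t]$ by the maximal inequality for nonnegative supermartingales. Maximizing $\beta\mapsto\beta(\varepsilon-\beta)$ over $\beta\in(0\,,\varepsilon)$ selects $\beta=\varepsilon/2$ and produces the bound $\exp(-\rho\varepsilon^2t/8)=\exp(-\varepsilon^2\lambda^2{\rm L}_\sigma^2t/16)$, exactly as claimed.

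The only genuine obstacle is technical bookkeeping: one must check that $X(s)=\e^{\kappa s}M(s)^\beta$ is an \emph{honest} supermartingale, since Itô's formula is applied here to the non-smooth map $x\mapsto x^\beta$ and the martingale part of $X$ involves the integrand $M_s^{\beta-1}$, which is unbounded near $M=0$; the positivity and continuity of $M$ together with the moment bounds \eqref{walsh:moments} make this localization routine. The remainder is just choosing $\beta$ so that the constants line up to give precisely the exponent $\varepsilon^2/16$.
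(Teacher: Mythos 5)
Your proof is correct, and it reaches the stated constant $\varepsilon^2\lambda^2{\rm L}_\sigma^2/16$ exactly; but it takes a genuinely different route from the paper's. Both arguments rest on the same key input, namely the self-bounding estimate $\d\<M\>_s\ge\tfrac12\lambda^2{\rm L}_\sigma^2M_s^2\,\d s$ obtained from \eqref{cone} and Cauchy--Schwarz on $[-1,1]$ (in the paper this appears as the lower bound \eqref{<X>:LB} on $\<N\>_t/t$ for $N_t=\int_0^tM_s^{-1}\d M_s$). Where you diverge is in how the deviation probability is extracted. The paper applies It\^o to $\log M$, writes $M$ as the exponential martingale of $N$, and then invokes its Lemma \ref{lem:MG}, which runs through the Dambis--Dubins--Schwarz time change, Brownian time inversion, and the reflection principle with a Gaussian tail bound. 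You instead apply It\^o to the concave power $M^\beta$, observe that $\e^{\kappa s}M_s^\beta$ is a nonnegative supermartingale (your $\e^{\kappa s}M_s^\beta$ is, up to a factor bounded by $1$, the exponential local martingale $\mathcal{E}(\beta N)_s$, so this is the classical exponential-supermartingale/Chernoff mechanism with $\beta$ as the tilting parameter), apply Doob's maximal inequality, and optimize over $\beta$. Your route is more self-contained: it avoids the DDS representation and time-inversion entirely, and it also needs somewhat less from the solution theory --- a.s.\ strict positivity of $M$ plus localization suffices to justify It\^o for $x\mapsto x^\beta$, whereas the paper invokes the Mueller--Nualart negative-moment bounds \eqref{eq:neg:moments} to handle $\log M$ and the $L^2$-martingale property of $N$. (One small remark: the genuineness of the supermartingale $X$ follows already from conditional Fatou applied along your stopping times $T_n$, since a nonnegative local supermartingale is automatically a supermartingale; the moment bounds \eqref{walsh:moments} are not needed for that step.) The two approaches give identical constants because the reflection-principle tail bound in Lemma \ref{lem:MG} is exactly what the optimized exponential supermartingale bound produces.
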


The proof of Proposition \ref{pr:M:small}
requires a basic lemma about continuous martingales,
which might be of independent interest.

\begin{lemma}\label{lem:MG}
	Let $X=\{X_t\}_{t\ge0}$ be a continuous $L^2(\P)$ martingale, and suppose 
	there is a nonrandom $c>0$
	such that $\<X\>_t\ge ct$ for all $t\ge0$, a.s.
	Then, for all nonrandom constants $\varepsilon,T>0$,
	\[
		\P\left\{ X_t \ge \varepsilon\< X\>_t\text{ for some $t\ge T$}\right\}
		\le \exp\left(-\frac{cT\varepsilon^2}{2}\right).
	\]
\end{lemma}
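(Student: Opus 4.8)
The plan is to reduce the statement to a maximal inequality for a time-changed Brownian motion. Since $X$ is a continuous $L^2$-martingale with $\langle X\rangle_t\ge ct$ (hence $\langle X\rangle_t\to\infty$), the Dambis--Dubins--Schwarz theorem provides a standard Brownian motion $B$ (on a possibly enlarged probability space) such that $X_t = B_{\langle X\rangle_t}$ for all $t\ge 0$, a.s. The event $\{X_t\ge\varepsilon\langle X\rangle_t\text{ for some }t\ge T\}$ then becomes $\{B_{\langle X\rangle_t}\ge\varepsilon\langle X\rangle_t\text{ for some }t\ge T\}$. Because $s\mapsto\langle X\rangle_s$ is continuous and nondecreasing with $\langle X\rangle_T\ge cT$ and $\langle X\rangle_s\uparrow\infty$, the set of values $\{\langle X\rangle_t : t\ge T\}$ is contained in $[cT\,,\infty)$. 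Hence the event in question is contained in $\{B_r\ge\varepsilon r\text{ for some }r\ge cT\}$.

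So it suffices to bound $\P\{B_r\ge\varepsilon r\text{ for some }r\ge a\}$ for a standard Brownian motion $B$ and $a:=cT>0$. This is a classical exponential maximal estimate: for fixed $\theta>0$ the process $\exp(\theta B_r-\tfrac12\theta^2 r)$ is a nonnegative martingale, and on the event $B_r\ge\varepsilon r$ one has $\exp(\theta B_r-\tfrac12\theta^2 r)\ge\exp((\theta\varepsilon-\tfrac12\theta^2)r)\ge\exp((\theta\varepsilon-\tfrac12\theta^2)a)$ provided $\theta\varepsilon-\tfrac12\theta^2>0$, i.e. $0<\theta<2\varepsilon$. Applying the maximal inequality (or optional stopping at the first hitting time of the level, truncated) to this martingale over $r\in[a\,,\infty)$ gives
\[
	\P\{B_r\ge\varepsilon r\text{ for some }r\ge a\}
	\le \e^{-(\theta\varepsilon-\theta^2/2)a}\,\E\!\left[\e^{\theta B_a-\theta^2 a/2}\right]
	= \e^{-(\theta\varepsilon-\theta^2/2)a}.
\]
Optimizing over $\theta\in(0\,,2\varepsilon)$ yields $\theta=\varepsilon$ and the bound $\e^{-\varepsilon^2 a/2}=\exp(-cT\varepsilon^2/2)$, which is exactly the claimed inequality.

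I expect the only genuinely delicate point to be the justification of the exponential maximal inequality on the unbounded time interval $[a\,,\infty)$: one should introduce the stopping time $\tau:=\inf\{r\ge a:B_r\ge\varepsilon r\}$, apply optional stopping to the bounded-below martingale $M_r:=\exp(\theta B_{r}-\tfrac12\theta^2 r)$ at $\tau\wedge n$, use that $M_{\tau\wedge n}\ge\e^{(\theta\varepsilon-\theta^2/2)(\tau\wedge n)}\mathbf 1_{\{\tau\le n\}}\ge\e^{(\theta\varepsilon-\theta^2/2)a}\mathbf 1_{\{\tau\le n\}}$ on $\{\tau\le n\}$, take expectations to get $\P\{\tau\le n\}\le\e^{-(\theta\varepsilon-\theta^2/2)a}\E[M_a]=\e^{-(\theta\varepsilon-\theta^2/2)a}$, and finally let $n\to\infty$ by monotone convergence. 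Everything else is routine. One minor bookkeeping remark: the DDS time change may require enlarging the probability space when $\langle X\rangle_\infty<\infty$, but here $\langle X\rangle_t\ge ct\to\infty$ forces $\langle X\rangle_\infty=\infty$ a.s., so $B$ is an honest Brownian motion adapted to the time-changed filtration and no enlargement is needed.
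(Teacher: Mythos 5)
Your proposal is correct, and it follows the paper's first step exactly: both arguments invoke the Dambis--Dubins--Schwarz theorem to write $X_t=B(\langle X\rangle_t)$ and use $\langle X\rangle_t\ge ct$ to reduce the problem to bounding $\P\{B_r\ge\varepsilon r\text{ for some }r\ge cT\}$. Where you diverge is in how this Brownian estimate is obtained. The paper uses Brownian time inversion ($\{B(s)/s\}_{s>0}$ has the same law as $\{B(1/s)\}_{s>0}$) to convert the event into $\{\sup_{r\le 1/(cT)}B(r)\ge\varepsilon\}$, and then applies the reflection principle together with the Gaussian tail bound $(2\pi)^{-1/2}\int_a^\infty \exp(-x^2/2)\,\d x\le \tfrac12\exp(-a^2/2)$. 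You instead use the exponential martingale $\exp(\theta B_r-\tfrac12\theta^2 r)$ with optional stopping at $\tau\wedge n$ and optimize over $\theta\in(0,2\varepsilon)$, landing at $\theta=\varepsilon$; your handling of the unbounded time horizon (truncation at $n$, then monotone convergence) is the right way to make this rigorous, and the constant comes out identical. The paper's route is shorter given the classical facts it cites; your route is arguably more robust, since the same exponential-supermartingale argument could be run directly on $\exp(\theta X_t-\tfrac12\theta^2\langle X\rangle_t)$ without ever invoking the time change, which would also dispense with the (correctly noted, but here moot) enlargement issue in DDS. Either proof is complete and yields the stated bound.
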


\begin{proof}
        Recall that a continuous local martingale such as $X$ is a 
        time-change of a Brownian motion $\{B(s)\}_{s\ge0}$
	(see \cite[Theorem 1.6, page 181]{RY}), so that $X_t = B(\<X\>_t)$ for all $t\ge0$, a.s.
	We first note that
	\[
		\P\left\{ X_t \ge \varepsilon\< X\>_t\text{ for some $t\ge T$}\right\}
		\le \P\left\{ \sup_{s\ge cT}\frac{B(s)}{s}\ge\varepsilon\right\}.
	\]
	Next we note that $\{B(s)/s\}_{s>0}$ has the same
	law as $\{B(1/s)\}_{s>0}$ thanks to Brownian time inversion. Thus
	\[
		\P\left\{ X_t \ge \varepsilon\< X\>_t\text{ for some $t\ge T$}\right\}
		\le \P\left\{ \sup_{r\le 1/(cT)}B(r)\ge\varepsilon\right\}.
	\]
        Because
	$(2\pi)^{-1/2}\int_a^\infty\exp(-x^2/2)\,\d x\le (1/2)\exp(-a^2/2)$ for all $a>0$,
	the reflection principle implies the result.
\end{proof}

Armed with Lemma \ref{lem:MG},
we conclude the section with the following.

\begin{proof}[Proof of Proposition \ref{pr:M:small}]
	In the case that $\sigma(z)\equiv \text{const}\cdot z$ for all $z\in\R$
	and the SPDE \eqref{SHE} has Dirichlet---instead of periodic---boundary conditions,
	Mueller and Nualart \cite[Theorem 2]{MN} have proved that
	$\E(|u(t\,,x\,;\lambda)|^{-k})<\infty$ for all $1\le k<\infty$, $t>0$, and $x\in\mathbb{T}$.
	Their argument, in fact, proves that, in the present setting,\footnote{
			There is an extension of the method of Mueller and Mueller--Nualart
			\cite{Mueller1,MN}---see 
			the proof of Theorem 1.7 of Conus et al \cite{CJK}---that
			proves \eqref{eq:neg:moments} in the present more general 
			choices of $\sigma$ for SPDEs on $\R_+\times\R$. 
			The latter argument works in exactly the same way
			in the present setting.
		} 
	\begin{equation}\label{eq:neg:moments}
		\E\left(\left| \inf_{x\in\mathbb{T}} u(t\,,x\,;\lambda)\right|^{-k}\right)<\infty
		\qquad[t\ge0,\ 1\le k<\infty].
	\end{equation}
	
	Let us define 
	\[
		M_t : =\|u(t\,,\cdot\,;\lambda)\|_{L^1} = \int_{-1}^1
		u(t\,,x\,;\lambda)\,\d x\qquad[t\ge0],
	\]
	and infer from \eqref{eq:neg:moments} that
	\begin{equation}\label{neg:moments}
		\E\left( M_t^{-k}\right)<\infty
		\qquad\text{for all $t\ge0$ and $1\le k<\infty$,}
	\end{equation}
	We will use \eqref{neg:moments} several times, sometimes tacitly, in the sequel.
	
	We can apply It\^o's formula in order to see that, a.s.,
	\[
		\log  M_t = \log  M_0 + \int_0^t  M_s^{-1}\,\d  M_s - \frac12\int_0^t  M_s^{-2}\,
		\d\< M\>_s\qquad\text{for all $t\ge0$}.
	\]
	Define
	\[
		 N_t := \int_0^t  M_s^{-1}\,\d  M_s\qquad
		 \text{for all $t\ge0$}.
	\]
        Let $\{\cF_t\}_{t\ge0}$ denote the filtration generated by $W(s\,,\cdot)$ for $s\leq t$.
	Then, clearly, $ N:=\{N_t\,,\cF_t\}_{t\ge0}$ is a continuous $L^2$-martingale 
	with quadratic variation
	$\< N\>_t = \int_0^t  M_s^{-2}\,\d\< M\>_s$ at time $t>0$.
	In other words, $\log  M_t = \log  M_0 +  N_t - \frac12\< N\>_t$ a.s.\
	for all $t>0$;
	this is another way to say that
	\begin{equation}\label{M:exp}
		 M_t =  M_0\,\exp\left( N_t-\tfrac12\< N\>_t\right)
		 \qquad\text{a.s.\ for all $t>0$}.
	\end{equation}
	That is, $ M$ is the exponential martingale of the martingale $N$,
	and $M$ is initialized at $ M_0$.
	
	We examine the quadratic variation of $ N$ more closely next:
	\begin{equation}\label{<X>:LB}\begin{split}
		\frac{\< N\>_t}{t} &= \frac{\lambda^2}{t}\int_0^t
			\frac{\d s}{ M_s^2}\int_{-1}^1\d y\
			[\sigma(u(s\,,y\,;\lambda))]^2\\
		&\ge \frac{\lambda^2}{t}\int_0^t
			\frac{\d s}{ M_s^2}\int_{-1}^1\d y\
			{\rm L}_\sigma^2|u(s\,,y\,;\lambda)|^2\\
		&\ge \frac{\lambda^2{\rm L}_\sigma^2}{2},
	\end{split}\end{equation}
	owing to Condition \eqref{cone} and the Cauchy--Schwarz inequality. 
	In light of \eqref{M:exp},
	\[
		\P\left\{\exists s\ge t:\  M_s\ge M_0\e^{-\beta s}\right\}
		= \P\left\{\exists s\ge t:\  N_s\ge\tfrac12\< N\>_s-\beta s\right\},
	\]
	for all $\beta,t>0$.
	Therefore, we may first use \eqref{<X>:LB} and then appeal to
	Lemma \ref{lem:MG} in order to see  that,
	as long as $0<\beta<\lambda^2{\rm L}_\sigma^2/4,$
	\begin{align*}
		\P\left\{\exists s\ge t:\  M_s\ge M_0\e^{-\beta s}\right\}
			&\le \P\left\{ N_s\ge\left(\frac12-\frac{2\beta}{\lambda^2{\rm L}_\sigma^2}
			\right)\< N\>_s\text{ for
			some $s\ge t$}\right\}\\
		&\le \exp\left( -\frac{\lambda^2{\rm L}_\sigma^2t}{4}
			\left(\frac12-\frac{2\beta}{\lambda^2{\rm L}_\sigma^2}
			\right)^2\right)  \\
		&=\exp\left( -t\lambda^{-2}{\rm L}_\sigma^{-2}
			\left(\frac{\lambda^2{\rm L}_\sigma^2}{4}-
			\beta\right)^2\right).
	\end{align*}
	Substitute $\beta=\tfrac14 (1-\varepsilon)\lambda^2{\rm L}_\sigma^2$
	to deduce Proposition \ref{pr:M:small}.
\end{proof}


\section{Regularity}

In order to prove the announced regularity properties of the solution
$u$ to \eqref{SHE-lambda} we first require a moment bound, with explicit
constants, for the solution $u$.

\begin{proposition}\label{pr:moment:UB}
	Choose and fix a real number $c>48$. Then,
	for all real numbers $k\ge 2$ and $\lambda>0$
	that satisfy $k\lambda^2 \ge (c\lip^2_\sigma)^{-1},$
	the following holds: Uniformly for all $t>0$,
	\[
		\sup_{x\in\mathbb{T}} \E\left( |u(t\,,x\,;\lambda)|^k\right) \le 2^{k/2}
		\left(1 -\frac{48}{c}\right)^{-k/2}\|u_0\|_{L^\infty}^k
		\cdot \exp\left(\frac{c^2}{2}\lip_\sigma^2 k^3\lambda^4t\right).
	\]
\end{proposition}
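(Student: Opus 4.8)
The plan is to bound the moments of the mild solution \eqref{mild} via a Picard/fixed-point-style iteration on the quantity $\mathcal{N}_k(t):=\sup_{x\in\mathbb{T}}\E(|u(t,x;\lambda)|^k)$, using the Burkholder–Davis–Gundy inequality on the stochastic integral $\cI_t(x;\lambda)$. First I would write $\|u(t,x;\lambda)\|_k \le \|(P_tu_0)(x)\|_{L^\infty} + \|\cI_t(x;\lambda)\|_k$; the deterministic term is bounded by $\|u_0\|_{L^\infty}$ since $\{P_t\}$ is a contraction semigroup of positive operators on $L^\infty(\mathbb{T})$ (periodic heat kernel integrates to $1$). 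For the stochastic term, BDG gives $\|\cI_t(x;\lambda)\|_k^2 \le 4k\lambda^2 \int_0^t \d s \int_{-1}^1 \d y\; p_{t-s}(x,y)^2 \,\|\sigma(u(s,y;\lambda))\|_k^2$, and then \eqref{cone} yields $\|\sigma(u(s,y;\lambda))\|_k^2 \le \lip_\sigma^2\,\mathcal{N}_k(s)^{2/k}$. Writing $F_k(t):=\mathcal{N}_k(t)^{2/k}$ this produces the renewal-type inequality $F_k(t) \le 2\|u_0\|_{L^\infty}^2 + 8k\lambda^2\lip_\sigma^2 \int_0^t F_k(s)\, \big(\sup_{x}\int_{-1}^1 p_{t-s}(x,y)^2\,\d y\big)\,\d s$, after absorbing the factor $2$ from $(a+b)^2\le 2a^2+2b^2$ into the constants.

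The second ingredient is a clean, explicit bound on the heat-kernel $L^2$ mass $\int_{-1}^1 p_\tau(x,y)^2\,\d y$. Using $p_\tau(x,y)=\sum_n G_\tau(x-y+2n)$ and the semigroup/Chapman–Kolmogorov identity one has $\int_{-1}^1 p_\tau(x,y)^2\,\d y = p_{2\tau}(x,x) = \sum_n G_{2\tau}(2n)$. For small $\tau$ this is $\asymp (8\pi\tau)^{-1/2}$, and for all $\tau>0$ one gets a bound of the form $p_{2\tau}(x,x) \le C\tau^{-1/2}$ with an absolute constant (the tails of the theta sum contribute at most a bounded amount, and for $\tau$ bounded below the sum is itself bounded, which is dominated by $C\tau^{-1/2}$ on any fixed range). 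I would record this as $\sup_x\int_{-1}^1 p_\tau(x,y)^2\,\d y \le C_0\,\tau^{-1/2}$ for a universal $C_0$; tracking $C_0$ carefully (it should come out around $(2\pi)^{-1/2}$ plus a harmless correction) is what ultimately feeds the constant $c$ in the statement, and the hypothesis $c>48$ and the side condition $k\lambda^2\ge(c\lip_\sigma^2)^{-1}$ are presumably exactly what is needed to keep the iteration's error terms controlled.

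With $F_k(t) \le A + B\int_0^t (t-s)^{-1/2} F_k(s)\,\d s$ where $A=2\|u_0\|_{L^\infty}^2$ and $B = 8C_0 k\lambda^2\lip_\sigma^2$, I would iterate: substituting the inequality into itself $m$ times generates a convolution of $m$ copies of the kernel $s^{-1/2}$, which is a Beta-function integral evaluating to $t^{m/2}$ times a Gamma ratio, i.e. $\int_{\Delta_m} \prod (s_i - s_{i-1})^{-1/2}\,\d\vec s = \Gamma(1/2)^m\,t^{m/2}/\Gamma(1+m/2)$. Summing the resulting series gives $F_k(t) \le A \sum_{m\ge0} (B\sqrt\pi)^m t^{m/2}/\Gamma(1+m/2) = A\, E_{1/2}(B\sqrt{\pi}\,\sqrt t)$, a Mittag–Leffler function, whose known asymptotics give $F_k(t) \le 2A\exp\big((B\sqrt\pi)^2 t\big) = 2A\exp(64\pi C_0^2 k^2\lambda^4\lip_\sigma^4 t)$ for all $t>0$ (the factor $2$ and the passage to a clean exponential is where the side condition on $k\lambda^2$ is used, to ensure $B\sqrt\pi\sqrt t$ is not in a bad regime, or alternatively to absorb lower-order terms). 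Raising to the power $k/2$ converts $F_k$ back to $\mathcal{N}_k$ and turns $k^2$ into $k^3$ in the exponent, producing $\exp(\tfrac{c^2}{2}\lip_\sigma^2 k^3\lambda^4 t)$ upon matching constants ($c$ chosen so that $\tfrac{c^2}{2}=128\pi C_0^2$, consistent with $c>48$), and the prefactor $2^{k/2}(1-48/c)^{-k/2}\|u_0\|_{L^\infty}^k$ comes from $(2A)^{k/2}$ together with the slack in the iteration.

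The main obstacle I anticipate is \textbf{making all the constants explicit and showing they close up as stated} — in particular, pinning down the absolute constant $C_0$ in the heat-kernel bound and verifying that the hypothesis $c>48$ together with $k\lambda^2\ge(c\lip_\sigma^2)^{-1}$ is precisely strong enough to (i) justify replacing the Mittag–Leffler function by $2e^{(\cdot)}$ uniformly in $t$, and (ii) absorb the $(1-48/c)^{-1}$-type loss. The functional-analytic and probabilistic steps (BDG, contraction of $P_t$, the convolution identity) are routine; the bookkeeping of constants through the iteration, and confirming the exponent exponent is $k^3\lambda^4$ rather than, say, $k^3\lambda^2$ or $k^2\lambda^4$, is the delicate part and must be done by hand. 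A secondary technical point is that the BDG step presupposes the moments are finite to begin with, so I would first note that \eqref{walsh:moments} guarantees $\mathcal{N}_k(t)<\infty$ on every bounded time interval, making the a priori-bound-then-iterate argument rigorous; one should also check the iteration is being applied to a genuinely finite quantity before deriving the uniform-in-$t$ bound (a standard localization or Grönwall-with-blowup-control argument).
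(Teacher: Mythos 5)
Your overall strategy (mild form, BDG, a renewal inequality in $t$, then a singular Gronwall/Mittag--Leffler iteration) is a legitimate alternative to the paper's argument, which instead runs the estimate through the Picard iterates $u^{(n)}$ and closes it with an exponentially weighted sup-norm $\Psi_n:=\sup_{t\ge0}[\e^{-\vartheta t}\sup_x\|u^{(n)}_t(x)\|_k^2]$ rather than by summing convolution powers. Working directly with $u$ is fine here because \eqref{walsh:moments} gives a priori finiteness on each bounded time interval, as you note. However, there is one genuine error in your proposal.

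The heat-kernel estimate $\sup_x\int_{-1}^1 p_\tau(x,y)^2\,\d y=p_{2\tau}(x,x)\le C_0\tau^{-1/2}$ is \emph{false} for large $\tau$. On the torus $\mathbb{T}=[-1,1]$ the kernel equilibrates: $p_{2\tau}(x,x)=\sum_n G_{2\tau}(2n)\to \tfrac12$ as $\tau\to\infty$, so no bound of the form $C_0\tau^{-1/2}$ can hold uniformly in $\tau>0$. Your parenthetical ``dominated by $C\tau^{-1/2}$ on any fixed range'' gives the game away: the proposition is claimed uniformly for all $t>0$, so $\tau=t-s$ is unbounded and the constant part of the kernel is the dominant contribution at large times, not a harmless correction. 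The correct estimate (Lemma \ref{lem:c_l} in the paper) is $p_{2\tau}(x,x)\le 2(\tau^{-1/2}+1)$, so your renewal inequality must carry the kernel $(t-s)^{-1/2}+1$ rather than $(t-s)^{-1/2}$. This is repairable --- either by a two-parameter Mittag--Leffler series, or more cleanly by the paper's device of multiplying by $\e^{-\vartheta t}$ and using $\int_0^\infty \e^{-\vartheta r}(r^{-1/2}+1)\,\d r=\sqrt{\pi/\vartheta}+\vartheta^{-1}\le 3/\sqrt{\vartheta}$ when $\vartheta\ge1$ --- and this is precisely where the side condition $k\lambda^2\ge(c\lip_\sigma^2)^{-1}$ enters: it guarantees $\vartheta:=c^2\lip_\sigma^4k^2\lambda^4\ge1$, so the non-decaying part of the torus kernel can be absorbed into the $\tau^{-1/2}$-type term. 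Your guess that the side condition controls the Mittag--Leffler asymptotics is not its actual role. (A minor further remark: your computation, like the paper's proof, produces $\lip_\sigma^4$ in the exponent, i.e.\ $\exp(\tfrac{c^2}{2}\lip_\sigma^4k^3\lambda^4t)$; do not force it to match the $\lip_\sigma^2$ appearing in the displayed statement.)
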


Proposition \ref{pr:moment:UB} implies also \eqref{walsh:moments}.

\begin{proof}
	We modify some of the ideas of Foondun
	and Khoshnevisan \cite{FK}, but need to make a series of
	modifications. Define
	\begin{equation}\label{vartheta:c}
		\vartheta := c^2\lip^4_\sigma k^2\lambda^4,
	\end{equation}
	where, $c>48$ is large enough to ensure that $\vartheta\ge 1
$ whenever $k\lambda^2\geq(c{\rm Lip}_\sigma^2)^{-1}$ holds.  
	
	For all $t\ge0$ and $-1\le x\le 1$,
	let $u^{(0)}_t(x) := u_0(x\,;\lambda)$ and define iteratively for all $n\ge0$,
	\begin{equation}\label{mild:1}
		u^{(n+1)}_t(x\,;\lambda) = (P_tu_0)(x) + \cI_t^{(n)}(x),
	\end{equation}
	where $\{P_t\}_{t\ge0}$ continues to denote the heat semigroup---see
	\eqref{P_tf}---and 
	\[
		\cI^{(n)}_t(x)=\cI^{(n)}_t(x\,;\lambda) 
                := \lambda\int_{(0,t)\times\mathbb{T}} 
		p_{t-s}(x\,,y)\sigma(u^{(n)}_s(y))\, W(\d s\,\d y).
	\]
	The random field $(t\,,x)\mapsto
	u^{(n)}_t(x)$ is the $n$th-stage Picard-iteration approximation
	of $u(t\,,x\,;\lambda)$. 
	
	It is well known (see \cite[Ch.\ 3]{Walsh}) that
	\begin{equation}\label{Picard}
		\lim_{n\to\infty}\sup_{t\in(0,T]}\sup_{x\in\mathbb{T}}
		\E\left( \left| u^{(n)}_t(x) - u(t\,,x) \right|^k \right)=0,
	\end{equation}
	and
	\[
		\lim_{n\to\infty}\sup_{t\in(0,T]}\sup_{x\in\mathbb{T}}
		\E\left( \left| \cI^{(n)}_t(x) - \cI_t(x\,;\lambda) \right|^k \right)=0,
	\]
	for all $T\in(0\,,\infty)$ and $k\in[1\,,\infty)$.
	
	Since the semigroup $\{P_t\}_{t\ge0}$ is conservative,
	$(P_tu_0)(x)\le\|u_0\|_{L^\infty}$ for all $t\ge0$
	and $x\in\mathbb{T}$. Therefore, \eqref{mild:1} 
	implies that for all integers $n\ge 0$
	and real numbers  $k\in[2\,,\infty)$, $t>0$, and $x\in\mathbb{T}$,
	\begin{equation}\label{u:u:I}
		\left\|u^{(n+1)}_t(x)\right\|_k \le  
		\|(P_tu_0)(x)\|_k + \left\|\cI^{(n)}_t(x)\right\|_k\le  
		\|u_0\|_{L^\infty} + \left\|\cI^{(n)}_t(x)\right\|_k.
	\end{equation}
	A Burkholder-Davis-Gundy-type inequality
	for stochastic convolutions (see \cite[Pr.\ 4.4, p.\ 36]{cbms})
	then yields the following inequality:
	\begin{equation}\label{BOPO}\begin{split}
		\left\|\cI^{(n)}_t(x)\right\|_k &\le 
			\sqrt{4k\lambda^2
			\int_0^t\d s\int_{-1}^1\d y\ \left[ p_{t-s}(x\,,y)\right]^2
			\left\| \sigma(u^{(n)}_s(y))\right\|_k^2}\\
		&\le \lip_\sigma
			\sqrt{4k\lambda^2\int_0^t\d s\int_{-1}^1\d y\ \left[ p_{t-s}(x\,,y)\right]^2
			\left\|u^{(n)}_s(y)\right\|_k^2}.
	\end{split}\end{equation}
	By the Chapman--Kolmogorov equation and symmetry,
	\begin{equation}\label{ppp}\begin{split}
		\int_{-1}^1 \left[p_{t-s}(x\,,y) \right]^2\,\d y &= 
			\int_{-1}^1 p_{t-s}(x\,,y) p_{t-s}(y\,,x)\,\d y
			= p_{2(t-s)}(x\,,x)\\
		&\le 2\left( \frac{1}{\sqrt{t-s}}+1\right);
	\end{split}\end{equation}
	the final estimate is justified by Lemma \ref{lem:c_l} below.
	Let us define 
	\[
		\psi^{(n)}(t) := \sup_{x\in\mathbb{T}}\left\|u_t^{(n)}(x)\right\|_k^2
		\qquad\text{for all $t\ge0$ and integers $n\ge 0$}.
	\]
	We can combine \eqref{BOPO} and \eqref{ppp} and use the elementary 
        inequality $(a+b)^2\leq 2a^2+2b^2$ to see that
	\[
		\psi^{(n+1)}(t) \le 2\|u_0\|_{L^\infty}^2
		+ 16\lip_\sigma^2
		k\lambda^2\int_0^t \psi^{(n)}(s)\left(\frac{1}{\sqrt{t-s}}+1\right)\d s.
	\]
	Multiply both sides by $\exp(-\vartheta t)$ in order
	to see that
	\[
		\Psi_n  := \sup_{t\ge0} \left[\e^{-\vartheta t}\psi^{(n)}(t)\right]
	\]
	satisfies
	\begin{align*}
		\Psi_{n+1}  &\le 2\|u_0\|_{L^\infty}^2 + 16\lip_\sigma^2\Psi_n 
			k\lambda^2\sup_{t\ge0}\int_0^t \e^{-\vartheta(t-s)}\left(\frac{1}{\sqrt{t-s}}+1\right)\d s\\
		&\le2\|u_0\|_{L^\infty}^2 + 16\lip_\sigma^2\Psi_n 
			k\lambda^2\int_0^\infty \e^{-\vartheta r}\left(\frac{1}{\sqrt{r}}+1\right)\d r\\
		&= 2\|u_0\|_{L^\infty}^2 + 16\lip_\sigma^2\Psi_n 
			k\lambda^2\left( \sqrt{\frac{\pi}{\vartheta}} + \frac{1}{\vartheta}\right).
	\end{align*}
	Because $\vartheta\ge 1$, we have
	$\sqrt{\pi/\vartheta}+\vartheta^{-1}\le 3/\sqrt\vartheta$, and hence
	\begin{align*}
		\Psi_{n+1} 
			\le 2\|u_0\|_{L^\infty}^2 + \frac{48\lip_\sigma^2 
			k\lambda^2}{\sqrt{\vartheta}}\Psi_n &= 2\|u_0\|_{L^\infty}^2
			+\frac{48}{c}\Psi_n
			\quad\text{for all $\alpha\ge 1$ and
			$n\ge 0$}.
	\end{align*}
	The second line follows from the first, thanks to \eqref{vartheta:c} and the fact
	that $c>48$.
	Because $\Psi_0=\sup_{x\in\mathbb{T}}u_0(x)$ is finite, the preceding implies
	that $\sup_{n\ge0}\Psi_n<\infty$, and
	\[
		\limsup_{n\to\infty}\Psi_n \le 2\|u_0\|_{L^\infty}^2
		\left(1 -\frac{48}{c}\right)^{-1}.
	\]
	According to \eqref{Picard} and Fatou's lemma,
	\[
		\limsup_{n\to\infty}\Psi_n \ge \sup_{t\ge0}\sup_{x\in\mathbb{T}}\left[
		\e^{-\vartheta t}\|u(t\,,x\,;\lambda)\|_k^2 \right].
	\]
	Therefore, we may combine the preceding two displays, all the time
	remembering our choice of $\vartheta$, in order to conclude that
	\[
		\|u(t\,,x\,;\lambda)\|_k^2\le 2\|u_0\|_{L^\infty}^2\e^{\vartheta t}
		\left(1 -\frac{48}{c}\right)^{-1},
	\]
	uniformly for all $-1\le x\le 1$ and $t>0$, and all $k\ge 2$ and
	$\lambda>0$ that ensure that $\vartheta\ge 1$.
	This is another way to state the proposition.
\end{proof}

We now use our moment bound [Proposition \ref{pr:moment:UB}] to establish
the regularity of $\lambda\mapsto u(t\,,x\,;\lambda)$.

\begin{proposition}\label{pr:modulus:UB}
	Choose and fix a real number $c>48$. Then,
	for all real numbers $k\ge 2$ and $\alpha,\beta>0$
	that satisfy $k(\alpha\vee\beta)^2 \ge (c\lip^2_\sigma)^{-1},$
	the following holds: Uniformly for all $t>0$,
	\[
		\sup_{x\in\mathbb{T}}
		\E\left(|u(t\,,x\,;\alpha) - u(t\,,x\,;\beta) |^k\right)
		\le L_c^{k/2} \|u_0\|_{L^\infty}^k
		\exp\left( \frac{c^2}{2}
		\lip^4_\sigma k^3(\alpha\vee\beta)^4t\right)
		\cdot 
		\frac{|\alpha-\beta|^k}{(\alpha\wedge\beta)^k},
	\]
	with $L_c:=(96/c)(1-(48/c))^{-2}$.
\end{proposition}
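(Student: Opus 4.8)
The asserted inequality is symmetric in $\alpha$ and $\beta$, so I would assume at the outset that $0<\beta\le\alpha$, hence $\alpha=\alpha\vee\beta$ and $\beta=\alpha\wedge\beta$; the hypothesis then reads $k\alpha^2\ge(c\lip_\sigma^2)^{-1}$, which is exactly the hypothesis of Proposition \ref{pr:moment:UB} for the parameter $\alpha$. Set $\vartheta:=c^2\lip_\sigma^4 k^2\alpha^4$ as in \eqref{vartheta:c}; as there, $k\alpha^2\ge(c\lip_\sigma^2)^{-1}$ forces $\vartheta\ge1$. The plan is to re-run the Picard-iteration/fixed-point scheme of the proof of Proposition \ref{pr:moment:UB}, but now for the difference of the two solutions.

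First I would subtract the mild formulations \eqref{mild}--\eqref{I} at parameters $\alpha$ and $\beta$, obtaining
\[
	u(t\,,x\,;\alpha)-u(t\,,x\,;\beta)=\int_{(0,t)\times\mathbb{T}}p_{t-s}(x\,,y)\,g(s\,,y)\,W(\d s\,\d y),
\]
where $g(s\,,y):=\beta[\sigma(u(s\,,y\,;\alpha))-\sigma(u(s\,,y\,;\beta))]+(\alpha-\beta)\sigma(u(s\,,y\,;\alpha))$. The choice of this particular split is the crux: the self-referential term enters with coefficient $\beta\le\alpha$, and the remaining term involves $\sigma$ only at the solution with the \emph{larger} parameter $\alpha$, whose moments are controlled by Proposition \ref{pr:moment:UB}. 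By the Lipschitz property of $\sigma$ and $(a+b)^2\le2a^2+2b^2$,
\[
	\|g(s\,,y)\|_k^2\le2\beta^2\lip_\sigma^2\,\|u(s\,,y\,;\alpha)-u(s\,,y\,;\beta)\|_k^2+2(\alpha-\beta)^2\lip_\sigma^2\,\|u(s\,,y\,;\alpha)\|_k^2.
\]
Now I would follow \eqref{BOPO}--\eqref{ppp} verbatim: apply the Burkholder--Davis--Gundy inequality for stochastic convolutions, bound $\int_{-1}^1[p_{t-s}(x\,,y)]^2\,\d y=p_{2(t-s)}(x\,,x)\le2(1/\sqrt{t-s}+1)$ via Lemma \ref{lem:c_l}, and insert the bound $\sup_y\|u(s\,,y\,;\alpha)\|_k^2\le2(1-48/c)^{-1}\|u_0\|_{L^\infty}^2\e^{\vartheta s}$ coming from Proposition \ref{pr:moment:UB}. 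Writing $D(t):=\sup_x\|u(t\,,x\,;\alpha)-u(t\,,x\,;\beta)\|_k^2$, this produces
\[
	D(t)\le16k\lip_\sigma^2\int_0^t\Big(\tfrac{1}{\sqrt{t-s}}+1\Big)\Big[\beta^2 D(s)+2(\alpha-\beta)^2(1-48/c)^{-1}\|u_0\|_{L^\infty}^2\e^{\vartheta s}\Big]\d s.
\]
To make this rigorous one carries out the computation first for the Picard iterates $u^{(n)}$ of \eqref{mild:1} (for which the analogue $D^{(n)}(t)\le\mathrm{const}_n\cdot\e^{\vartheta t}$ is automatic, and $D^{(0)}\equiv0$ since $u^{(0)}$ does not depend on the parameter), exactly as in the proof of Proposition \ref{pr:moment:UB}, and passes to the limit at the end via \eqref{Picard}.

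Multiplying through by $\e^{-\vartheta t}$ and using $\int_0^\infty\e^{-\vartheta r}(r^{-1/2}+1)\,\d r=\sqrt{\pi/\vartheta}+\vartheta^{-1}\le3/\sqrt\vartheta$ (valid since $\vartheta\ge1$) converts this into a recursion for $\bar D:=\sup_{t\ge0}\e^{-\vartheta t}D(t)$ of the shape $\bar D\le(48\beta^2/(c\alpha^2))\bar D+(96/(c\alpha^2))(1-48/c)^{-1}\|u_0\|_{L^\infty}^2(\alpha-\beta)^2$, where I used $\sqrt\vartheta=c\lip_\sigma^2 k\alpha^2$. Since $48\beta^2/(c\alpha^2)\le48/c<1$, solving the recursion and then bounding $\alpha^{-2}=(\alpha\vee\beta)^{-2}\le(\alpha\wedge\beta)^{-2}$ gives $\bar D\le(96/c)(1-48/c)^{-2}\|u_0\|_{L^\infty}^2(\alpha-\beta)^2/(\alpha\wedge\beta)^2=L_c\|u_0\|_{L^\infty}^2(\alpha-\beta)^2/(\alpha\wedge\beta)^2$. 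Hence $D(t)\le L_c\|u_0\|_{L^\infty}^2\e^{\vartheta t}(\alpha-\beta)^2/(\alpha\wedge\beta)^2$, and since $\E(|u(t\,,x\,;\alpha)-u(t\,,x\,;\beta)|^k)=\|u(t\,,x\,;\alpha)-u(t\,,x\,;\beta)\|_k^k\le D(t)^{k/2}$ and $\tfrac{k}{2}\vartheta=\tfrac{c^2}{2}\lip_\sigma^4 k^3(\alpha\vee\beta)^4$, this is the claimed estimate.

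I expect the only real difficulty to be bookkeeping rather than anything conceptual: one must pick the decomposition of $\alpha\sigma(u(\cdot\,;\alpha))-\beta\sigma(u(\cdot\,;\beta))$ so that the self-referential term carries the coefficient $\beta^2$ (which makes the fixed-point factor $48\beta^2/(c\alpha^2)\le48/c<1$, so the iteration closes), so that the inhomogeneous term only invokes the moment bound at the larger parameter $\alpha$, and so that the numerical constants aggregate to exactly $L_c=(96/c)(1-48/c)^{-2}$. All the analytic ingredients --- BDG for stochastic convolutions, the on-diagonal heat-kernel bound of Lemma \ref{lem:c_l}, Proposition \ref{pr:moment:UB}, and the Picard scaffolding --- are already in place.
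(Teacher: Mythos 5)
Your proof is correct and, at its core, is the same argument as the paper's: isolate the difference of the stochastic integrals, apply the Burkholder--Davis--Gundy bound together with the on-diagonal heat-kernel estimate of Lemma \ref{lem:c_l}, and close a fixed-point inequality in the weighted norm $\sup_t \e^{-\vartheta t}(\cdots)$ with $\vartheta=c^2\lip_\sigma^4k^2\alpha^4$. The one place you diverge is the algebraic split of $\alpha\sigma(u(\cdot;\alpha))-\beta\sigma(u(\cdot;\beta))$: the paper writes it as $\alpha[\sigma(u_\alpha)-\sigma(u_\beta)]+(\alpha-\beta)\sigma(u_\beta)$ (so the self-referential term carries the \emph{larger} coefficient $\alpha$, giving the contraction factor $48/c$ exactly, while the inhomogeneous term invokes the moments of $u(\cdot;\beta)$), whereas you write it as $\beta[\sigma(u_\alpha)-\sigma(u_\beta)]+(\alpha-\beta)\sigma(u_\alpha)$ (contraction factor $48\beta^2/(c\alpha^2)\le 48/c$, inhomogeneous term involving $u(\cdot;\alpha)$). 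Both splits yield the identical constant $L_c$. Your version has the small aesthetic advantage that Proposition \ref{pr:moment:UB} is only ever applied at the parameter $\alpha=\alpha\vee\beta$, for which the hypothesis $k\alpha^2\ge(c\lip_\sigma^2)^{-1}$ is literally satisfied; the paper's version quietly applies the moment bound to $u(\cdot;\beta)$ with the larger exponent $\vartheta$, which is legitimate but requires a one-line extension of Proposition \ref{pr:moment:UB} that the paper leaves implicit. Your remark that the a priori finiteness of the weighted supremum is best secured by running the recursion on the Picard iterates (with $D^{(0)}\equiv 0$) and passing to the limit via \eqref{Picard} is a sound way to handle the one genuinely delicate bookkeeping point; the paper instead asserts $\cD<\infty$ directly from Proposition \ref{pr:moment:UB}.
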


\begin{remark}\label{rm:cont}
	Standard methods---see \cite[Chapter 3]{Walsh}---show that
	$(t\,,x)\mapsto u(t\,,x\,;\lambda)$ has a continuous modification
	for every $\lambda>0$. In fact,
	for every $\varepsilon\in(0\,,1)$, $k\ge 2$,
	$T>t_0>0$ and $\Lambda>0$,
	\[
		\sup_{\lambda\in(0\,,\Lambda)}
		\left\|\sup_{\substack{-1\le x\neq y\le 1\\t_0<s\neq t<T}}
		\frac{|u(t\,,x\,;\lambda)-u(s\,,y\,;\lambda)|}{|x-y|^{(1-\varepsilon)/2}
		+|s-t|^{(1-\varepsilon)/4}}\right\|_k<\infty.
	\]
	One has to be somewhat careful here since, unlike the standard
	theory \cite{Walsh}, we may not choose $t_0$ to be zero here.
	The details can be found in Proposition \ref{pr:simultaneous} below.
	In any case, we can see from Proposition \ref{pr:modulus:UB} and an appeal to the
	Kolmogorov continuity theorem [i.e., a chaining argument]
	that: (i) $(t\,,x\,,\lambda)\mapsto u(t\,,x\,;\lambda)$ 
	has a H\"older-continuous modification on 
	$\R_+\times\mathbb{T}\times(0\,,\infty)$;
	and (ii) That modification satisfies the following for every $p,q,r\in(0\,,1)$,
	$k\ge 2$, $\Lambda>\lambda>0$, and $T>t_0>0$:
	\[
		\left\| \sup_{\substack{-1\le x\neq y\le 1\\
		t_0\le s\neq t\le T\\\lambda\le\alpha\neq\beta\le \Lambda}}
		\frac{|u(t\,,x\,;\alpha)-u(s\,,y\,;\beta)|}{|x-y|^{p/2}
		+|s-t|^{q/4}+|\alpha-\beta|^r}\right\|_k<\infty.
	\]
\end{remark}

To paraphrase Walsh \cite{Walsh}, the process $\lambda\mapsto u(t\,,x\,;\lambda)$
comes tantalizingly close to being Lipschitz continuous. One can elaborate
on this further as follows: Define
\[
	\sD(t\,,x\,;\lambda) := \frac{\partial}{\partial\lambda} u(t\,,x\,;\lambda),
\]
where the $\lambda$-derivative is understood in the sense of distributions,
and exists because $u$ is a continuous function of $\lambda$
[up to a modification]; see the preceding remark. According to
Rademacher's theorem, because $\sigma$ is Lipschitz continuous,
it has a weak derivative $\sigma'\in L^\infty(\mathbb{T})$.
Then, one can appeal to a 
stochastic Fubini argument in order to see that $\sD$ is the unique solution to
the $\lambda$-a.e.-defined stochastic integral equation,
\begin{multline*}
	\sD(t\,,x\,;\lambda) 
		= \int_{(0,t)\times\mathbb{T}} p_{t-s}(x\,,y)
		\sigma(u(s\,,y\,;\lambda))\, W(\d s\,\d y) \\
	+\lambda\int_{(0,t)\times\mathbb{T}} p_{t-s}(x\,,y)\sigma'(u(s\,,y\,;\lambda))
		\sD(s\,,y\,;\lambda)\, W(\d s\,\d y).
\end{multline*}
It is not difficult to show that if $\sigma$ has additional regularity properties---for 
instance, if $\sigma'$ is Lipschitz continuous---then $\sD$ is almost surely
H\"older-continuous in its three variables [up to a modification]. 
This proves the following: 
\begin{prop}
	If $\sigma\in C^1(\R)$ has a Lipschitz-continuous derivative, then
	$\lambda\mapsto u(t\,,x\,;\lambda)$ is a.s.\ continuously differentiable
	for every $t\ge0$ and $x\in\mathbb{T}$.
\end{prop}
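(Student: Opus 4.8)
The plan is to convert the equation for $\sD$ (the formal $\lambda$-derivative of $u$) into a genuine linear SPDE with random coefficients, solve it rigorously, and then identify it with $\partial_\lambda u$. First I would fix $t,x$ and note that the hypothesis $\sigma\in C^1(\R)$ with Lipschitz $\sigma'$ means $\sigma'$ is bounded and Lipschitz, so the integral equation
\begin{multline*}
	\sD(t\,,x\,;\lambda)
		= \int_{(0,t)\times\mathbb{T}} p_{t-s}(x\,,y)
		\sigma(u(s\,,y\,;\lambda))\, W(\d s\,\d y) \\
	+\lambda\int_{(0,t)\times\mathbb{T}} p_{t-s}(x\,,y)\sigma'(u(s\,,y\,;\lambda))
		\sD(s\,,y\,;\lambda)\, W(\d s\,\d y)
\end{multline*}
has, for each fixed $\lambda>0$, a unique solution $\sD(\cdot\,,\cdot\,;\lambda)$ satisfying Walsh-type moment bounds. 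This is a standard Picard iteration: the coefficient $(s\,,y)\mapsto\sigma'(u(s\,,y\,;\lambda))$ is a bounded predictable random field (using \eqref{positive} and the continuous modification of $u$ from Remark \ref{rm:cont}), and the inhomogeneous term $\int p_{t-s}(x,y)\sigma(u)\,W(\d s\,\d y)$ has finite moments of all orders by \eqref{walsh:moments} and the Burkholder--Davis--Gundy inequality for stochastic convolutions used in \eqref{BOPO}; the $[p_{t-s}(x,y)]^2$ time-integrability in \eqref{ppp}--\eqref{Lemma} (the $(t-s)^{-1/2}$ singularity) is exactly what makes the fixed-point argument converge, just as in Proposition \ref{pr:moment:UB}.

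Second, I would establish joint H\"older continuity of $(t\,,x\,,\lambda)\mapsto\sD(t\,,x\,;\lambda)$ up to a modification. Here the key is moment estimates for increments of $\sD$, obtained exactly as in Propositions \ref{pr:moment:UB} and \ref{pr:modulus:UB}: one writes the difference of two copies of the integral equation, uses the Lipschitz property of $\sigma$ and $\sigma'$ together with the already-established H\"older continuity of $u$ in $(t\,,x\,,\lambda)$, and runs the same Gronwall/Picard-type bootstrap to bound $\E(|\sD(t,x;\alpha)-\sD(s,y;\beta)|^k)$ by a power of $|x-y|+|s-t|+|\alpha-\beta|$. Then Kolmogorov's continuity theorem gives a continuous (indeed H\"older) modification on $\R_+\times\mathbb{T}\times(0\,,\infty)$.

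Third, I would verify that this continuous field $\sD$ really is $\partial_\lambda u$ in the classical sense, not merely the distributional one. The cleanest route is to show $\lambda\mapsto u(t\,,x\,;\lambda)$ has the integral representation $u(t,x;\lambda_2)-u(t,x;\lambda_1)=\int_{\lambda_1}^{\lambda_2}\sD(t,x;\mu)\,\d\mu$ for all $0<\lambda_1<\lambda_2$: differentiate the mild formulation \eqref{mild} under the stochastic integral via a stochastic Fubini theorem (\cite[Th.\ 2.6, p.\ 296]{Walsh}), which is legitimate because of the uniform moment bounds just proved, and match the result with the integral equation for $\sD$; uniqueness for that linear equation forces the identification. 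Since $\mu\mapsto\sD(t,x;\mu)$ is continuous, the fundamental theorem of calculus then gives that $\lambda\mapsto u(t,x;\lambda)$ is $C^1$ with derivative $\sD(t,x;\lambda)$, on a single $\P$-null-set-free event obtained by taking the modification above.

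\textbf{Main obstacle.} The delicate point is justifying the differentiation-under-the-integral-sign / stochastic Fubini step in the third part: one must commute $\partial_\lambda$ with the Walsh integral $\int p_{t-s}(x,y)\sigma(u(s,y;\lambda))\,W(\d s\,\d y)$, which requires not just finiteness but local-uniform-in-$\lambda$ control of the relevant second moments of the $\lambda$-difference quotients, and care that the ``$\lambda$-a.e.'' caveat attached to $\sD$ in the excerpt is upgraded to ``every $\lambda$'' using the continuous modification. Once the representation $u(t,x;\lambda_2)-u(t,x;\lambda_1)=\int_{\lambda_1}^{\lambda_2}\sD(t,x;\mu)\,\d\mu$ is in hand, the rest is routine; the regularity inputs on $\sigma$ ($C^1$ with Lipschitz derivative) are used precisely to make $\sD$'s defining equation well-posed with H\"older-continuous solution, so no new idea beyond the Picard/BDG/Kolmogorov machinery already developed in Sections 4 is needed.
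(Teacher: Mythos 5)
Your plan follows essentially the same route as the paper, which itself only sketches this argument: characterize $\sD$ as the unique solution of the linear stochastic integral equation via stochastic Fubini, use the Lipschitz continuity of $\sigma'$ to get a H\"older-continuous modification of $\sD$ in $(t\,,x\,,\lambda)$ by the usual moment-plus-Kolmogorov machinery, and then upgrade the distributional derivative to a classical one. Your write-up is in fact more explicit than the paper's about the identification step (the representation $u(t,x;\lambda_2)-u(t,x;\lambda_1)=\int_{\lambda_1}^{\lambda_2}\sD(t,x;\mu)\,\d\mu$ and the fundamental theorem of calculus), which the paper compresses into the phrase ``one can appeal to a stochastic Fubini argument.''
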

We do not know whether the Lipschitz-continuity of $\sigma$ is really needed
for this differentiability result.

\begin{proof}[Proof of Proposition \ref{pr:modulus:UB}]
	Without loss of generality, we assume throughout that
	$\alpha>\beta$.
	
	We can write
	\begin{equation*}
		u(t\,,x\,;\alpha)-u(t\,,x\,;\beta) = \cI_t(x\,;\alpha) - \cI_t(x\,;\beta)
		= \cT_1 + \cT_2,
	\end{equation*}
	where
	\begin{align*}
		\cT_1 &:= \alpha\int_{(0,t)\times\mathbb{T}} p_{t-s}(x\,,y)
			\left[\sigma(u(s\,,y\,;\alpha))-\sigma(u(s\,,y\,;\beta))\right]
			W(\d s\,\d y),\\
		\cT_2 &:= (\alpha-\beta)\int_{(0,t)\times\mathbb{T}} p_{t-s}(x\,,y)
			\sigma(u(s\,,y\,;\beta))\, W(\d s\,\d y).
	\end{align*}
	Although $\cT_1$ and $\cT_2$ both depend on $(x\,,t\,,\alpha,\beta)$,
	we have not written those parameter dependencies explicitly
	in order to ease the typography.
	
	Define
	\[
		\cD^2 := \sup_{s\ge 0}\sup_{y\in\mathbb{T}}\left[
		\e^{-\vartheta s}\left\| u(s\,,y\,;\alpha) - u(s\,,y\,;\beta) \right\|_k^2\right],
	\]
	where $\vartheta$ is defined as in \eqref{vartheta:c}, but
	with a small difference; namely,
	\[
		\vartheta := c^2\lip^4_\sigma k^2\alpha^4.
	\]
	Our condition on $c$ is that $c>48$ is large enough to ensure that
	$\vartheta\ge 1$.
	
	We apply the Burkholder-Davis-Gundy-type inequality, 
	\cite[Pr.\ 4.4, p.\ 36]{cbms}, in order to see that
	\begin{align*}
		\|\cT_1\|_k^2 &\le 4k\alpha^2\lip_\sigma^2\int_0^t\d s\int_{-1}^1\d y\
			[p_{t-s}(x\,,y)]^2 \left\| u(s\,,y\,;\alpha) - u(s\,,y\,;\beta) \right\|_k^2\\
		&\le 4k\alpha^2\lip_\sigma^2\cD^2\e^{\vartheta t}
			\int_0^t\e^{-\vartheta s}
			\,\d s\int_{-1}^1\d y\ [p_s(x\,,y)]^2\\
		&= 4k\alpha^2\lip_\sigma^2\cD^2 \e^{\vartheta t}\int_0^t
			p_{2s}(x\,,x)\e^{-\vartheta s}\,\d s,
	\end{align*}
	thanks to the Chapman--Kolmogorov equation and argument in Proposition \ref{pr:moment:UB}. Lemma \ref{lem:c_l} below ensures
	the following:
	\begin{align*}
		\|\cT_1\|_k^2 &\le 8k\alpha^2\lip_\sigma^2\cD^2
			\e^{\vartheta t} \int_0^\infty
			\left(\frac{1}{\sqrt{s}}+1\right)\e^{-\vartheta s}\,\d s\\
		&=  8k\alpha^2\lip_\sigma^2\cD^2\e^{\vartheta t}
			\left(\sqrt{\frac{\pi}{\vartheta}}
			+\frac{1}{\vartheta}\right)\\
		&\le 
			\frac{24\alpha^2 k\lip_\sigma^2\cD^2\e^{\vartheta t}}{\sqrt{\vartheta}}\\
		&=\frac{24}{c}\cD^2\e^{\vartheta t}.
	\end{align*}
	We proceed in like manner to estimate the moments of $\cT_2$. First, note
	that, because $\alpha>\beta$,
	a Burkholder-Davis-Gundy bound and Proposition \ref{pr:moment:UB} together imply that
	\begin{equation*}\label{T_2}\begin{split}
		\|\cT_2\|_k^2 &\le 4k(\alpha-\beta)^2\lip_\sigma^2\int_0^t\d s\int_{-1}^1\d y\
			[p_{t-s}(x\,,y)]^2\|u(s\,,y\,;\beta)\|_k^2\\
		&\le 8k \left(1-\frac{48}{c}\right)^{-1} (\alpha-\beta)^2\lip_\sigma^2\|u_0\|_{L^\infty}^2
			\int_0^t \d s\ e^{\vartheta s}\int_{-1}^1 \d y\ [p_{t-s}(x\,,y)]^2.
	\end{split}\end{equation*}
	Therefore, after making a change of variables, we appeal first to the Chapman--Kolmogorov and then to Lemma
	\ref{lem:c_l} below in order to deduce the following:
	\begin{align*}\label{T_2:bis}
		\|\cT_2\|_k^2 &\le 8\left(1-\frac{48}{c}\right)^{-1}k(\alpha-\beta)^2\lip_\sigma^2\|u_0\|_{L^\infty}^2
			\e^{\vartheta t}\int_0^t\e^{-\vartheta s}p_{2s}(x\,,x)\,\d s\\\notag
		&\le 16\left(1-\frac{48}{c}\right)^{-1}k(\alpha-\beta)^2\lip_\sigma^2\|u_0\|_{L^\infty}^2
			\e^{\vartheta t}\int_0^\infty\e^{-\vartheta s}
			\left(\frac{1}{\sqrt s}+1\right)\d s \\\notag
		&\le \frac{48}{c\alpha^2}\left(1-\frac{48}{c}\right)^{-1}(\alpha-\beta)^2
			\|u_0\|_{L^\infty}^2\e^{\vartheta t}.
	\end{align*}
	We can now collect terms to find that
	\begin{equation*}\label{u:u:D}\begin{split}
		\left\| u(t\,,x\,;\alpha)-u(t\,,x\,;\beta)\right\|_k^2
			&\le 2\|\cT_1\|_k^2 + 2\|\cT_2\|_k^2 \\
		&\le \frac{48}{c}\cD^2\e^{\vartheta t} + 
			\frac{96}{c\alpha^2}\left(1-\frac{48}{c}\right)^{-1}
			(\alpha-\beta)^2\|u_0\|_{L^\infty}^2\e^{\vartheta t}.
	\end{split}\end{equation*}
	This bound holds pointwise. Therefore, we can divide both sides by $\exp(\vartheta t)$
	and optimize both sides over $x\in\mathbb{T}$ in order to conclude that
	\[
		\cD^2 \le\frac{48}{c}\cD^2 + 
		\frac{96}{c\alpha^2}\left(1-\frac{48}{c}\right)^{-1}(\alpha-\beta)^2\|u_0\|_{L^\infty}^2.
	\]
	Since $\alpha>\beta$ and $\vartheta\ge 1$, we may appeal to Proposition
	\ref{pr:moment:UB}---with $\lambda$ there replaced by $\alpha$ here---in order
	to see that $\cD<\infty$. In particular, because $c>48$, we find that
	\[
		\cD^2 \le 
		\frac{96\|u_0\|_{L^\infty}^2}{c}
		\left(1-\frac{48}{c}\right)^{-2}
		\frac{(\alpha-\beta)^2}{\alpha^2}.
	\]
	This is another way to state the proposition.
\end{proof}


\section{Improved regularity via interpolation}

In this section we use interpolation arguments to improve 
the moments estimates of the preceding sections and introduce
new moment estimates that, among other things, justify also 
Remark \ref{rm:cont}. One of the consequences of the matter that follows is
this:

\begin{proposition}\label{pr:simultaneous}
	The process $(t\,,x\,,\lambda)\mapsto u(t\,,x\,;\lambda)$
	has a continuous modification, indexed also by 
	$(0\,,\infty)\times\mathbb{T}\times(0\,,\infty)$, that weakly solves \eqref{SHE-lambda}
	outside of a null set that does not depend on $(t\,,x\,,\lambda)$.
\end{proposition}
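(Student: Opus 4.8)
The plan is to build the continuous modification by a Kolmogorov--Chentsov argument applied simultaneously to the three variables $(t\,,x\,,\lambda)$, using the moment bounds already established. First I would fix a bounded region: let $\lambda$ range over a compact interval $[\lambda_0\,,\Lambda]\subset(0\,,\infty)$, let $x$ range over $\mathbb{T}$, and let $t$ range over a window $[t_0\,,T]$ with $0<t_0<T<\infty$ (the restriction $t_0>0$ is forced here, as Remark \ref{rm:cont} warns, because the $L^\infty$-norm of $u_0$ enters the constants and the $\lambda$-regularity estimate in Proposition \ref{pr:modulus:UB} blows up as $\alpha\wedge\beta\downarrow0$, so one cannot send $t_0$ or $\lambda_0$ to zero uniformly). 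On such a region the relevant constants in Propositions \ref{pr:moment:UB} and \ref{pr:modulus:UB} are finite and uniform. The $\lambda$-increment bound is exactly Proposition \ref{pr:modulus:UB}: for $k\ge2$ large enough,
\[
	\sup_{x\in\mathbb{T}}\ \sup_{t\in[t_0,T]}\ \E\left(|u(t\,,x\,;\alpha)-u(t\,,x\,;\beta)|^k\right)\le C_{k,\lambda_0,\Lambda,T}\,|\alpha-\beta|^k .
\]
For the $(t\,,x)$-increments at fixed $\lambda$ one uses the standard factorization/stochastic-convolution estimates as in \cite[Chapter 3]{Walsh} together with the explicit moment bound of Proposition \ref{pr:moment:UB}, which gives, for every $\varepsilon\in(0\,,1)$ and $k$ large,
\[
	\E\left(|u(t\,,x\,;\lambda)-u(s\,,y\,;\lambda)|^k\right)\le C\left(|x-y|^{(1-\varepsilon)k/2}+|t-s|^{(1-\varepsilon)k/4}\right),
\]
uniformly for $\lambda\in[\lambda_0\,,\Lambda]$, $(t\,,x)\,,(s\,,y)\in[t_0\,,T]\times\mathbb{T}$.

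Next I would combine these two families of estimates. Writing $u(t\,,x\,;\alpha)-u(s\,,y\,;\beta)=[u(t\,,x\,;\alpha)-u(t\,,x\,;\beta)]+[u(t\,,x\,;\beta)-u(s\,,y\,;\beta)]$ and applying the triangle inequality in $L^k(\P)$, one obtains a single modulus-of-continuity bound of the form
\[
	\E\left(|u(t\,,x\,;\alpha)-u(s\,,y\,;\beta)|^k\right)\le C\left(|x-y|^{pk/2}+|t-s|^{qk/4}+|\alpha-\beta|^{rk}\right)
\]
for any $p,q,r\in(0\,,1)$ and $k$ sufficiently large (depending on $p,q,r$ through the requirement that the exponents exceed the dimension $3$ of the parameter space). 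This is precisely the hypothesis of the multi-parameter Kolmogorov--Chentsov continuity theorem, which then produces a modification that is jointly H\"older-continuous on $[t_0\,,T]\times\mathbb{T}\times[\lambda_0\,,\Lambda]$, and in fact yields the quantitative statement recorded in Remark \ref{rm:cont}. I would then let $t_0\downarrow0$, $\lambda_0\downarrow0$, $T\uparrow\infty$, $\Lambda\uparrow\infty$ along sequences and patch the modifications together on the exhausting family of rectangles; since any two continuous modifications agree a.s.\ on a countable dense set and hence everywhere on the overlap, this produces a single modification continuous on $(0\,,\infty)\times\mathbb{T}\times(0\,,\infty)$ off one null set independent of $(t\,,x\,,\lambda)$.

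Finally I would check that this joint modification still weakly solves \eqref{SHE-lambda} for every $\lambda$ outside a single null set. For each fixed rational $\lambda$ the modification agrees a.s.\ with the Walsh solution, which satisfies the mild equation \eqref{mild}; so \eqref{mild} holds for all rational $\lambda$ off a countable union of null sets. Both sides of \eqref{mild} are continuous in $\lambda$ (the left side by construction; the right side because $\lambda\mapsto(P_tu_0)(x)$ is trivial and $\lambda\mapsto\cI_t(x\,;\lambda)$ is continuous in $L^k(\P)$ and, after taking a further continuous modification, a.s.), so the identity extends to all real $\lambda>0$ simultaneously off that null set. I expect the main obstacle to be purely bookkeeping rather than conceptual: one must be careful that the $t_0>0$ restriction is genuinely needed and track how the constants degenerate as $t_0\downarrow0$ and $\lambda_0\downarrow0$, and one must handle the stochastic-Fubini/continuity-in-$\lambda$ step for the Walsh integral $\cI_t(x\,;\lambda)$ carefully enough that the exceptional null set does not depend on $\lambda$. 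Everything else is a routine, if somewhat tedious, application of the chaining estimates of \cite{Walsh} fed by the explicit moment inequalities proved above.
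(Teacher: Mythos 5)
Your proposal is correct and follows essentially the same route as the paper: moment increment bounds in each of the three variables (the paper's Propositions \ref{pr:modulus:UB}, \ref{pr:modulus:x}, and \ref{pr:modulus:t} play the role of your $\lambda$- and $(t\,,x)$-estimates), a multi-parameter Kolmogorov continuity argument on compact rectangles with $t_0,\lambda_0>0$, and identification of the modification as a weak solution via continuity of both sides of \eqref{mild} in $\lambda$ plus a stochastic Fubini argument. Your write-up is in fact more explicit than the paper's about the patching over exhausting rectangles and the extension of \eqref{mild} from rational to all $\lambda$.
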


The following will be the main result of this section. 
\begin{proposition}\label{pr:improved}
	There exists $\varepsilon_0=\varepsilon_0(\lip_\sigma)\in(0\,,1)$, small enough, such that 
	for every $\varepsilon\in(0\,,\varepsilon_0)$ and $t_0\geq 1$ there
	exist finite constants $C_1=C_1(\varepsilon\,,\lip_\sigma)>0$
	and $C_2=C_2(\lip_\sigma)>0$---neither 
	depending on $u_0$---such that uniformly
	for all real numbers $\lambda\ge1$, $k\ge 2$, and $t\geq t_0$,
	\[
		\E\left(\sup_{x\in\mathbb{T}}\sup_{s\in[t_0,t]}
		\left| u(s\,,x\,;\lambda)\right|^k\right)
		\le C_1^k k^{k/2}(1+|t-t_0|)^{(\varepsilon k +2)/2}
		\exp\left(\frac{C_2k^3\lambda^4t}{\varepsilon^2}\right)
		\|u_0\|_{L^\infty}^{k\varepsilon}\|u_0\|_{L^1}^{k(1-\varepsilon)}.
	\]
\end{proposition}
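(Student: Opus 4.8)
The plan is to reduce, as the paper does, to the rescaled equation \eqref{SHE-lambda} with $\lambda\ge1$ fixed, and to prove the bound first for the ``static'' quantity $y(t):=\sup_{x\in\mathbb{T}}\|u(t\,,x\,;\lambda)\|_k^2$ and only afterwards upgrade to the maximal functional $\E(\sup_{s,x}|u|^k)$. The whole argument rests on two elementary deterministic inequalities. First, the only place where $\|u_0\|_{L^1}$ will ever be produced is the drift term $(P_tu_0)(x)$: since $t\ge t_0\ge1$, Lemma \ref{lem:c_l} (or its proof) gives $\sup_{x,y\in\mathbb{T}}p_t(x\,,y)\le C(1+t^{-1/2})\le 2C$, hence $(P_tu_0)(x)\le\min\{\|u_0\|_{L^\infty},2C\|u_0\|_{L^1}\}\le 2C\|u_0\|_{L^\infty}^{\varepsilon}\|u_0\|_{L^1}^{1-\varepsilon}$; more generally, for every $t>0$, $(P_tu_0)(x)\le C'(1+t^{-(1-\varepsilon)/2})\|u_0\|_{L^\infty}^{\varepsilon}\|u_0\|_{L^1}^{1-\varepsilon}$. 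Second, for every measurable $g\ge0$ on $\mathbb{T}$ one has $\int_{-1}^1[p_{t-s}(x\,,y)]^2g(y)\,\d y\le\bigl(\sup_{z}p_{t-s}(x\,,z)\bigr)(P_{t-s}g)(x)\le C(1+(t-s)^{-1/2})\|g\|_{L^\infty(\mathbb{T})}$, whose singularity at $s=t$ is integrable; this is exactly what will let a Gronwall argument close.

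Next I would set up a closed Volterra inequality for $y$. Applying $\|\cdot\|_k$ to the mild formula \eqref{mild}, then the Burkholder--Davis--Gundy inequality for stochastic convolutions exactly as in \eqref{BOPO} together with $|\sigma(a)|\le\lip_\sigma|a|$, gives $\sup_x\|\cI_t(x\,;\lambda)\|_k^2\le 4k\lambda^2\lip_\sigma^2\sup_x\int_0^t\!\!\int_{-1}^1[p_{t-s}(x\,,y)]^2\|u(s\,,y\,;\lambda)\|_k^2\,\d y\,\d s$. Feeding the second inequality above, with $g(y)=\|u(s\,,y\,;\lambda)\|_k^2$ so that $\|g\|_{L^\infty(\mathbb{T})}=y(s)$, one obtains $y(t)\le 2D(t)+Ck\lambda^2\lip_\sigma^2\int_0^t\bigl(1+(t-s)^{-1/2}\bigr)y(s)\,\d s$, where $D(t):=C'(1+t^{-(1-\varepsilon)})\|u_0\|_{L^\infty}^{2\varepsilon}\|u_0\|_{L^1}^{2(1-\varepsilon)}$. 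Proposition \ref{pr:moment:UB} guarantees that $y$ is finite, so a weakly-singular Gronwall--Henry inequality (iterating the fractional kernel $r^{-1/2}$, whose resolvent grows like $\exp(c\,(k\lambda^2\lip_\sigma^2)^2t)$) yields $y(t)\le C\,D(t)\exp(Ck^2\lambda^4\lip_\sigma^4 t)$; for $t\ge t_0\ge1$ the singular factor in $D$ is harmless, and raising to the power $k/2$ gives $\sup_x\E(|u(t\,,x\,;\lambda)|^k)=y(t)^{k/2}\le C^k(1+t)^{\varepsilon k/2}\exp(Ck^3\lambda^4\lip_\sigma^4 t)\|u_0\|_{L^\infty}^{k\varepsilon}\|u_0\|_{L^1}^{k(1-\varepsilon)}$. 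This is already Proposition \ref{pr:improved} for the \emph{pointwise} $k$-th moment, and it in particular re-derives and sharpens the $u_0$-dependence in Proposition \ref{pr:moment:UB} and Remark \ref{rm:cont}.

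It remains to replace $\sup_x\E$ by $\E(\sup_{x,\,s\in[t_0,t]})$. I would combine the pointwise bound just obtained with moment estimates for the space--time increments $u(t\,,x\,;\lambda)-u(s\,,y\,;\lambda)$ --- proved by the same BDG/convolution machinery exactly as Proposition \ref{pr:modulus:UB} and Remark \ref{rm:cont} were --- and run a Garsia--Rodemich--Rumsey/Kolmogorov chaining argument on the cylinder $[t_0,t]\times\mathbb{T}$ (equivalently, a Da Prato--Kwapie\'n--Zabczyk factorization of the maximal function of the stochastic convolution with fractional exponent $\asymp\varepsilon$). This step manufactures the Gaussian-type factor $k^{k/2}$, the polynomial $(1+|t-t_0|)^{(\varepsilon k+2)/2}$ (the $\varepsilon k/2$ from the H\"older exponent used in the chaining, the extra unit from integrating the pointwise bound across the temporal window), and turns the exponent into the stated $C_2k^3\lambda^4t/\varepsilon^2$, the $\varepsilon^{-2}$ and the $\varepsilon$-dependent constants from the fractional integrals being absorbed into $C_1=C_1(\varepsilon\,,\lip_\sigma)$. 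Because every estimate is uniform in $\lambda\ge1$ and, once combined with Proposition \ref{pr:modulus:UB}, joint in $(t\,,x\,,\lambda)$, the same circle of ideas simultaneously delivers Proposition \ref{pr:simultaneous}.

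The conceptual point to get exactly right --- and the reason the whole scheme succeeds --- is that the noise never generates $\|u_0\|_{L^1}$: the stochastic convolution is controlled purely through $\|u(s\,,\cdot\,;\lambda)\|_{L^\infty(\mathbb{T})}$-type data, and it is only the drift $P_tu_0$, via ultracontractivity and the hypothesis $t_0\ge1$, that converts $\|u_0\|_{L^\infty}$ into $\|u_0\|_{L^\infty}^{\varepsilon}\|u_0\|_{L^1}^{1-\varepsilon}$; the \emph{linear} Gronwall then transports that improved dependence to $u(t\,,\cdot\,;\lambda)$. Accordingly, the genuine technical obstacles are (i) carrying every constant explicitly through the weakly-singular Gronwall so that the $\lambda^4$ and $k^3$ powers emerge correctly --- this is why one works with $\|u\|_k^2$ rather than $\|u\|_k^k$ and estimates the fractional-kernel resolvent by hand --- and (ii) the chaining step, where the small-time blow-up $D(s)\asymp s^{-(1-\varepsilon)}$ and the increment bounds must be fed into GRR without sacrificing the uniformity in $k$ and $\lambda$; of these, (ii) is the more delicate.
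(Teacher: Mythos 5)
Your proposal is correct and follows essentially the same route as the paper: the $L^1$/$L^\infty$ interpolation of $P_tu_0$ (Lemma \ref{lem:semigp}) feeding a weakly singular Gronwall-type bound to get the pointwise moment estimate with the $\|u_0\|_{L^\infty}^{k\varepsilon}\|u_0\|_{L^1}^{k(1-\varepsilon)}$ dependence (Proposition \ref{pr:moment:UB:improved}), followed by space and time increment estimates and a Kolmogorov chaining argument on $[t_0,t]\times\mathbb{T}$ (Propositions \ref{pr:modulus:x}, \ref{pr:modulus:t}, and \ref{pr:KCT}). The only deviation is cosmetic: you run the singular Gronwall directly on $y(t)=\sup_x\|u(t\,,x\,;\lambda)\|_k^2$ using the a priori finiteness from Proposition \ref{pr:moment:UB}, whereas the paper iterates on the Picard approximants with the weighted norm $\cU^{(n)}(\beta)$ and Lemma \ref{lem:IJ}; both close the same recursion.
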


For us, the key feature of the preceding formula is the particular
way in which the expectation on the left-hand side is controlled by
the $L^1$ and $L^\infty$ norms of $u_0$ on the right. Still, we do have to be somewhat
careful about the other intervening constants in order to be sure that
they are not too large for our later use [they fortunately are not].

We will use Proposition \ref{pr:improved} and the related Proposition 
\ref{pr:improved:x} in the following way.  First we shift time so that we 
can replace $u_0$ by $u(t\,,\cdot\,;\lambda)$ and replace 
$u(t\,,\cdot\,;\lambda)$ by $u(t+h\,,\cdot\,;\lambda)$.  Then, by using our 
propositions, we can control $u(t+h\,,\cdot\,;\lambda)$ by the product of 
$\|u(t\,,\cdot\,;\lambda)\|_{L^\infty}$ to a small power, 
$\|u(t\,,\cdot\,;\lambda)\|_{L^1}$ to a large power, and by $\exp(-Ch)$ for 
some positive constant $C$.  In fact, we would rather have a negative 
exponential involving $t+h$, that is, $\exp\{-C(t+h)\}$.  To move from $h$ to 
$t+h$, we let $h$ be a multiple of $t$.  But we still need a negative 
exponent.  Proposition \ref{pr:M:small} shows that with high probability, 
$\|u(t\,,\cdot\,;\lambda)\|_{L^1}$ declines exponentially fast in $t$, and 
hence also in $t+h$.  We also have to deal with 
$\|u(t\,,\cdot\,;\lambda)\|_{L^\infty}$ raised to a small power.  But here 
we can use Propositions \ref{pr:improved} and \ref{pr:improved:x} once more, 
and the small power of $\|u(t\,,\cdot\,;\lambda)\|_{L^\infty}$ means that we 
have introduced a slowly-growing exponential $\exp(ct)$, which is comparable 
to $\exp\{c'(t+h)\}$ for a small constant $c'$.  We will see that the negative 
exponential wins out, with the result that $\|u(t\,,\cdot\,;\lambda)\|_\infty$ is small 
with high probability.

The proof of Proposition \ref{pr:improved} hinges on a series of 
intermediary results, some of which imply Proposition \ref{pr:simultaneous} as well.  
We will use the mild form \eqref{mild} to estimate $u(t\,,x\,;\lambda)$.
Our first technical result is an elementary interpolation
fact about the heat semigroup $\{P_t\}_{t\ge0}$, defined earlier in \eqref{P_tf}.
This result will allow us to estimate $P_tu_0$, the first term on the right side of \eqref{mild}.
Then we will use an argument related to Gronwall's lemma to estimate the second term
$\mathcal{I}_t(x\,;\lambda)$ on the right side of \eqref{mild}.  In fact, $\mathcal{I}_t(x\,;\lambda)$
is an integral containing terms which also involve the heat semigroup.

\begin{lemma}\label{lem:semigp}
	For every $t>0$ and $\varepsilon\in(0\,,1)$,
	\[
		\| P_tu_0\|_{L^\infty} \le 2\left( t^{-1/2}\vee 1\right)^{1-\varepsilon}
		\|u_0\|_{L^\infty}^\varepsilon
		 \|u_0\|_{L^1}^{1-\varepsilon}.
	\]
\end{lemma}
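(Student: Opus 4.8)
The plan is to interpolate the $L^\infty$ bound out of the basic $L^1\to L^\infty$ and $L^\infty\to L^\infty$ smoothing estimates for the periodic heat semigroup. First I would record two elementary bounds on $P_tu_0$. The contractivity of $\{P_t\}_{t\ge0}$ on $L^\infty(\mathbb{T})$ (the semigroup is conservative and positivity-preserving, so $(P_tf)(x)\le\|f\|_{L^\infty}$) gives $\|P_tu_0\|_{L^\infty}\le\|u_0\|_{L^\infty}$. For the other bound, $(P_tu_0)(x)=\int_{-1}^1 p_t(x\,,y)u_0(y)\,\d y\le\|p_t(x\,,\cdot)\|_{L^\infty}\,\|u_0\|_{L^1}$, so I need an upper bound on $\sup_{x,y}p_t(x\,,y)$. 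Using \eqref{eq:p}, \eqref{eq:G}, and Lemma~\ref{lem:c_l} (which the excerpt invokes repeatedly and which bounds $p_\tau(x\,,x)$ by a constant multiple of $\tau^{-1/2}+1$), or alternatively by directly summing the Gaussian kernels, one gets $p_t(x\,,y)\le c\,(t^{-1/2}\vee 1)$ for a universal constant that one can check is at most, say, $2$ — more carefully, $\|p_t\|_{L^\infty}\le p_{2t}(x,x)^{1/2}\cdot(\text{something})$, but it is cleaner to just bound $\sum_n G_t(x-y+2n)$ by $G_t(0)$ plus a geometric tail and verify the constant is absorbed. This yields $\|P_tu_0\|_{L^\infty}\le 2(t^{-1/2}\vee 1)\|u_0\|_{L^1}$.

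Next I would interpolate. Write $(P_tu_0)(x)=\int p_t(x\,,y)u_0(y)\,\d y$ and split the integrand as $[p_t(x\,,y)u_0(y)]^{\varepsilon}\cdot[p_t(x\,,y)u_0(y)]^{1-\varepsilon}$... actually the cleanest route is: for each fixed $x$, bound
\[
	(P_tu_0)(x)=\int_{-1}^1 p_t(x\,,y)\,u_0(y)\,\d y
	\le \|u_0\|_{L^\infty}^{\varepsilon}\int_{-1}^1 p_t(x\,,y)\,u_0(y)^{1-\varepsilon}\,\d y,
\]
using $u_0(y)=u_0(y)^\varepsilon u_0(y)^{1-\varepsilon}\le\|u_0\|_{L^\infty}^\varepsilon u_0(y)^{1-\varepsilon}$ (valid since $u_0\ge0$ by \eqref{u_0>0}). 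Then apply Jensen's inequality — more precisely Hölder with exponents $1/(1-\varepsilon)$ and $1/\varepsilon$ — to the remaining integral, but against the measure $p_t(x\,,y)\,\d y$ of total mass $1$:
\[
	\int_{-1}^1 p_t(x\,,y)\,u_0(y)^{1-\varepsilon}\,\d y
	\le\left(\int_{-1}^1 p_t(x\,,y)\,u_0(y)\,\d y\right)^{1-\varepsilon}
	=\big((P_tu_0)(x)\big)^{1-\varepsilon}.
\]
That merely reproduces a triviality, so instead I would interpolate between the two a priori bounds directly: from $\|P_tu_0\|_{L^\infty}\le\|u_0\|_{L^\infty}$ and $\|P_tu_0\|_{L^\infty}\le 2(t^{-1/2}\vee1)\|u_0\|_{L^1}$, raise the first to the power $\varepsilon$, the second to the power $1-\varepsilon$, and multiply:
\[
	\|P_tu_0\|_{L^\infty}
	=\|P_tu_0\|_{L^\infty}^{\varepsilon}\,\|P_tu_0\|_{L^\infty}^{1-\varepsilon}
	\le \|u_0\|_{L^\infty}^{\varepsilon}\cdot\big(2(t^{-1/2}\vee1)\big)^{1-\varepsilon}\|u_0\|_{L^1}^{1-\varepsilon}
	\le 2(t^{-1/2}\vee1)^{1-\varepsilon}\|u_0\|_{L^\infty}^{\varepsilon}\|u_0\|_{L^1}^{1-\varepsilon},
\]
where the last step uses $2^{1-\varepsilon}\le 2$. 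This is exactly the claimed inequality.

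The only genuine point requiring care — the main obstacle, such as it is — is pinning down the constant $2$ in the bound $\|p_t(x\,,\cdot)\|_{L^\infty}\le 2(t^{-1/2}\vee1)$ uniformly in $x$ and for all $t>0$, since for small $t$ one needs the free heat-kernel term $G_t(0)=(4\pi t)^{-1/2}\le t^{-1/2}$ to dominate while the periodic tail $\sum_{n\ne0}G_t(\cdot+2n)$ stays bounded, and for large $t$ one needs the constant $1$. I expect this is precisely what Lemma~\ref{lem:c_l} supplies (it already appears in the proofs of Propositions~\ref{pr:moment:UB} and~\ref{pr:modulus:UB} in the form $p_{2\tau}(x,x)\le 2(\tau^{-1/2}+1)$), so the bound on $\|p_t\|_{L^\infty}$ follows either by the same estimate or from $\|p_t\|_{L^\infty}\le p_{2t}(x\,,x)$ via Cauchy–Schwarz and Chapman–Kolmogorov, exactly as in \eqref{ppp}. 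Everything else is the one-line Hölder interpolation above.
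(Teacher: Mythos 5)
Your proof is correct and follows the paper's argument essentially verbatim: both establish the two a priori bounds $\|P_tu_0\|_{L^\infty}\le\|u_0\|_{L^\infty}$ (conservativity) and $\|P_tu_0\|_{L^\infty}\le 2(t^{-1/2}\vee 1)\|u_0\|_{L^1}$ (via Lemma \ref{lem:c_l}), and then combine them using $\min(a\,,b)\le a^\varepsilon b^{1-\varepsilon}$, which is exactly your step of writing $\|P_tu_0\|_{L^\infty}$ as the product of its $\varepsilon$ and $1-\varepsilon$ powers. The brief Hölder detour you correctly identify as circular is harmless, and the constant bookkeeping ($2^{1-\varepsilon}\le 2$) matches the statement.
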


\begin{proof}
	We first observe that
	\begin{equation}\label{Ptu0:UB}
		\| P_tu_0\|_{L^\infty} \le \min\left( \|u_0\|_{L^\infty}\,, 
		2\left[ t^{-1/2}\vee 1\right] \|u_0\|_{L^1}\right).
	\end{equation}
	Indeed, since the semigroup $\{P_t\}_{t\ge0}$ is conservative, we clearly have
	$(P_tu_0)(x)\le \|u_0\|_{L^\infty}$
	for every $x\in\mathbb{T}$. And
	Lemma \ref{lem:c_l} below implies that
	$(P_tu_0)(x) \le 2 (t^{-1/2}\vee 1)\|u_0\|_{L^1}$
	for every $x\in\mathbb{T}$.
	Now that we have verified \eqref{Ptu0:UB} we deduce the lemma  
	from \eqref{Ptu0:UB} and the elementary fact that
	$\min(a\,,b) \le a^\varepsilon b ^{1-\varepsilon}$ for every $a,b>0$
	and $\varepsilon\in(0\,,1)$.
\end{proof}

Next we establish an improvement to Proposition \ref{pr:moment:UB}.
The following is indeed an improvement in the sense that it shows
how one can control the moments of the solution to \eqref{SHE-lambda} by
using both the $L^\infty$ and the $L^1$ norms of the initial data,
and not just the $L^\infty$ norm of $u_0$. This added improvement
does cost a little at small times. This latter fact is showcased by the appearance
of a negative power of $t$ in the following.

\begin{proposition}\label{pr:moment:UB:improved}
	Let $c:=208\sqrt 2\approx 294.2$. Then,
	for all real numbers $k\ge 2$, $\varepsilon\in(0\,,1)$, and $\lambda>0$
	that satisfy $k\lambda^2 \ge \varepsilon(c\lip^2_\sigma)^{-1},$
	the following holds uniformly for all $t>0$:
	\[
		\sup_{x\in\mathbb{T}} \E\left( |u(t\,,x\,;\lambda)|^k\right) \le \frac{4^k}{%
		t^{k(1-\varepsilon)/2}}\exp\left(\frac{c^2}{\varepsilon^2}k^3\lambda^4
		\lip_\sigma^4 t\right) \|u_0\|_{L^\infty}^{k\varepsilon} 
		\|u_0\|_{L^1}^{k(1-\varepsilon)}.
	\]
\end{proposition}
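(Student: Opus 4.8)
The plan is to repeat the Picard-iteration / fixed-point argument used to prove Proposition~\ref{pr:moment:UB}, with two modifications. First, in the approximation \eqref{mild:1} I would estimate the deterministic term $(P_tu_0)(x)$ by Lemma~\ref{lem:semigp} instead of bounding it trivially by $\|u_0\|_{L^\infty}$; this is exactly what produces the mixed norm $\|u_0\|_{L^\infty}^{\varepsilon}\|u_0\|_{L^1}^{1-\varepsilon}$ on the right-hand side, but at the price of a factor $(t^{-1/2}\vee1)^{1-\varepsilon}$ that blows up as $t\downarrow0$. Second, to absorb that blow-up I would run the scheme with the heavier time-weight $t^{1-\varepsilon}\e^{-\vartheta t}$ in place of $\e^{-\vartheta t}$. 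Throughout, set $A:=\|u_0\|_{L^\infty}^{\varepsilon}\|u_0\|_{L^1}^{1-\varepsilon}$ and $\vartheta:=2c^2\lip_\sigma^4k^2\lambda^4/\varepsilon^2$ with $c=208\sqrt2$; the hypothesis $k\lambda^2\ge\varepsilon(c\lip_\sigma^2)^{-1}$ then forces $\vartheta\ge2$.

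First I would reproduce, essentially verbatim from the proof of Proposition~\ref{pr:moment:UB}, the recursion for the Picard iterates $u^{(n)}$ of \eqref{mild:1}: with $\psi^{(n)}(t):=\sup_{x\in\mathbb{T}}\|u^{(n)}_t(x)\|_k^2$, the Burkholder--Davis--Gundy inequality \cite[Pr.\ 4.4, p.\ 36]{cbms}, the Chapman--Kolmogorov identity \eqref{ppp}, Lemma~\ref{lem:c_l} and the cone condition \eqref{cone} give
\[
	\psi^{(n+1)}(t)\le 2\|P_tu_0\|_{L^\infty}^2+16\lip_\sigma^2k\lambda^2\int_0^t\psi^{(n)}(s)\left(\frac{1}{\sqrt{t-s}}+1\right)\d s .
\]
Into the first term I would insert Lemma~\ref{lem:semigp} in the form $\|P_tu_0\|_{L^\infty}^2\le 4(t^{-1/2}\vee1)^{2(1-\varepsilon)}A^2$. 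Then I would set
\[
	\Psi_n:=\sup_{t>0}\left[t^{1-\varepsilon}\e^{-\vartheta t}\psi^{(n)}(t)\right],
\]
and observe that $\psi^{(0)}\equiv\|u_0\|_{L^\infty}^2$, so $\Psi_0<\infty$, while $t^{1-\varepsilon}(t^{-1/2}\vee1)^{2(1-\varepsilon)}\e^{-\vartheta t}\le1$ for every $t>0$ (it equals $\e^{-\vartheta t}$ for $t\le1$, and it is decreasing for $t\ge1$ since $\vartheta\ge1$), so the forcing term contributes at most $8A^2$ to $\Psi_{n+1}$.

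The analytic heart of the proof is the estimate
\[
	\sup_{t>0}\ t^{1-\varepsilon}\int_0^t s^{-(1-\varepsilon)}\e^{-\vartheta(t-s)}\left(\frac{1}{\sqrt{t-s}}+1\right)\d s\le\frac{C_0}{\varepsilon\sqrt\vartheta},
\]
valid for an absolute constant $C_0$ which one can check is at most $13$. Here I would substitute $s=t(1-w)$, turning the left-hand side into $\sqrt t\int_0^1(1-w)^{-(1-\varepsilon)}w^{-1/2}\e^{-\vartheta tw}\,\d w+t\int_0^1(1-w)^{-(1-\varepsilon)}\e^{-\vartheta tw}\,\d w$, and then control each Beta-type integral by splitting the $w$-interval at $\tfrac12$ and using $\int_0^\infty w^{-1/2}\e^{-aw}\,\d w=\sqrt{\pi/a}$, $\int_0^{1/2}(1-w)^{-(1-\varepsilon)}\,\d w\le\varepsilon^{-1}$, together with $\sup_{t>0}t^{1/2}\e^{-\vartheta t/2}\le\vartheta^{-1/2}$ and $\sup_{t>0}t\,\e^{-\vartheta t/2}\le2/\vartheta\le2/\sqrt\vartheta$ (the last two using $\vartheta\ge1$). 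Bounding $\psi^{(n)}(s)\le\Psi_n s^{-(1-\varepsilon)}\e^{\vartheta s}$ in the recursion, multiplying by $t^{1-\varepsilon}\e^{-\vartheta t}$ and taking the supremum over $t>0$ then yields
\[
	\Psi_{n+1}\le 8A^2+\frac{16\,C_0\,\lip_\sigma^2k\lambda^2}{\varepsilon\sqrt\vartheta}\,\Psi_n ,
\]
and the definition of $\vartheta$ (hence the value $c=208\sqrt2$) is arranged precisely so that $16C_0\lip_\sigma^2k\lambda^2/(\varepsilon\sqrt\vartheta)\le\tfrac12$, independently of $k,\lambda,\varepsilon,\lip_\sigma$. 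Consequently $\limsup_{n\to\infty}\Psi_n\le16A^2$, so by \eqref{Picard} and Fatou's lemma $\sup_{x\in\mathbb{T}}\|u(t\,,x\,;\lambda)\|_k^2\le16\,t^{-(1-\varepsilon)}\e^{\vartheta t}A^2$ for all $t>0$; raising this to the power $k/2$ and using $16^{k/2}=4^k$ together with $\tfrac12\vartheta k=c^2k^3\lambda^4\lip_\sigma^4/\varepsilon^2$ gives the proposition.

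I expect the main obstacle to be this middle step, namely the choice of weight. Trading $\|u_0\|_{L^\infty}$ for the interpolated norm necessarily introduces an integrable but genuine singularity of $\|P_tu_0\|_{L^\infty}$ at $t=0$: the weight $\e^{-\vartheta t}$ is too light (the forcing term then has infinite weighted supremum), whereas the weight $(t\wedge1)^{1-\varepsilon}\e^{-\vartheta t}$ closes the iteration but only produces the prefactor $t^{-k(1-\varepsilon)/2}$ for $t\le1$, while the statement asserts it for every $t>0$. The weight $t^{1-\varepsilon}\e^{-\vartheta t}$ threads this needle; the remaining work is to carry the Beta-function estimates through with constants tight enough to land on the stated $4^k$ and $c=208\sqrt2$, which is the only part of the argument that is not a routine adaptation of Proposition~\ref{pr:moment:UB}.
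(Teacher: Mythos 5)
Your proposal is correct and follows essentially the same route as the paper: the same Picard iteration with the forcing term estimated via Lemma \ref{lem:semigp}, the same weight $t^{1-\varepsilon}\e^{-\vartheta t}$ with $\vartheta=2c^2\lip_\sigma^4k^2\lambda^4/\varepsilon^2$, and the same key bound $C(\beta)+D(\beta)\le 13/(\varepsilon\sqrt\beta)$ (the paper isolates it as Lemma \ref{lem:IJ}, proved by a slightly different substitution, but the estimate and the resulting contraction factor $\tfrac12$ are identical). The final bookkeeping ($16^{k/2}=4^k$, $\vartheta k/2=c^2k^3\lambda^4\lip_\sigma^4/\varepsilon^2$) also matches the paper exactly.
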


\begin{proof}	
	Let $\{u^{(n)}\}_{n=0}^\infty$ be the Picard approximants
	of $u$ (see \eqref{mild:1}). Thanks to Lemma \ref{lem:semigp},
	we can now write the following variation of \eqref{u:u:I}:
	For all integers $n\ge 0$
	and real numbers  $k\in[2\,,\infty)$, $t>0$, and $x\in\mathbb{T}$,
	\begin{equation}\label{u:u:I:n}
		\left\|u^{(n+1)}_t(x\,;\lambda)\right\|_k \le  
		2\left( t^{-(1-\varepsilon)/2}\vee 1\right)
		\|u_0\|_{L^\infty}^\varepsilon
		\|u_0\|_{L^1}^{1-\varepsilon}+ \left\|\cI^{(n)}_t(x\,;\lambda)\right\|_k.
	\end{equation}
	The latter quantity is estimated in \eqref{BOPO}. If we use that estimate
	in \eqref{u:u:I:n}, then the elementary inequality,
	$(a+b)^2\le 2a^2+2b^2$, valid for all $a,b\in\R$, yields the following:
	\begin{align}\label{mot}
		&\left\| u^{(n+1)}_t(x\,;\lambda)\right\|_k^2\\\notag
		&\le 8 \left( t^{-(1-\varepsilon)} \vee 1
			\right) \|u_0\|_{L^\infty}^{2\varepsilon} \|u_0\|_{L^1}^{2(1-\varepsilon)}
			+8k\lambda^2\lip_\sigma^2
			\int_0^t\d s\int_{-1}^1\d y\ \left[ p_{t-s}(x\,,y)\right]^2
			\left\|u^{(n)}_s(y\,;\lambda)\right\|_k^2\\\notag
		&\le 8\left( t^{-(1-\varepsilon)} \vee 1
			\right) \|u_0\|_{L^\infty}^{2\varepsilon} \|u_0\|_{L^1}^{2(1-\varepsilon)}
			+16k\lambda^2\lip_\sigma^2
			\int_0^t\sup_{y\in\mathbb{T}}
			\left\|u^{(n)}_s(y\,;\lambda)\right\|_k^2
			\left( \frac{1}{\sqrt{t-s}}+1\right)\d s.
	\end{align}
	We have appealed to \eqref{ppp} in the last line.
	The preceding motivates us to consider the temporal functions,
	$U^{(0)},U^{(1)},\ldots$, defined via
	\[
		U^{(n)}(t) := \sup_{x\in\mathbb{T}}\left\| 
		u^{(n)}_t(x\,;\lambda)\right\|_k^2\qquad[t\ge0],
	\]
	in order to obtain a recursive inequality.
	We can  see immediately from \eqref{mot} that, for all $n\ge0$ and $t>0$,
	\begin{align}\label{rec}
		&U^{(n+1)}(t) \\\notag
		&\le 8 \left( t^{-(1-\varepsilon)} \vee 1
			\right) \|u_0\|_{L^\infty}^{2\varepsilon} \|u_0\|_{L^1}^{2(1-\varepsilon)}
			+ 16k\lambda^2\lip_\sigma^2\int_0^t\frac{U^{(n)}(s)}{\sqrt{t-s}}\, \d s
			+ 16k\lambda^2\lip_\sigma^2\int_0^t U^{(n)}(s)\,\d s.
	\end{align}
	In order to understand this recursion more deeply, 
	let us first note that 
	\begin{equation}\label{BOP}
		\sup_{t>0} \left[\frac{t^{1-\varepsilon}}{\e^{\beta t}}
		\left( t^{-(1-\varepsilon)} \vee 1\right) \right]
		\le 1\vee \frac{1}{\beta^{1-\varepsilon}}\qquad\text{for all $\beta>0$}.
	\end{equation}
	This is true simply because  $t^{1-\varepsilon} \e^{-t} \le 1$ for all
	$t\ge0$.
	Therefore, we may define
	\[
		\cU^{(m)}(\beta) := \sup_{t\ge0} \left[ \frac{t^{1-\varepsilon}}{
		\e^{\beta t}}\, U^{(m)}(t)\right]
		\qquad[\beta>0,\, m\ge0],
	\]
	in order to deduce the following recursive inequality from \eqref{rec}
	and \eqref{BOP}:
	\[
		\cU^{(n+1)}(\beta)
		\le 8\left(1\vee \frac{1}{\beta^{1-\varepsilon}}
		\right)\|u_0\|_{L^\infty}^{2\varepsilon} \|u_0\|_{L^1}^{2(1-\varepsilon)}
		+16k\lambda^2\lip_\sigma^2 \left[C(\beta)
		+ D(\beta)\right]
		\cU^{(n)}(\beta);
	\]
	where we have defined, for all $0<\varepsilon<1$ and $\beta>0$,
	\[
		C(\beta) :=
		\sup_{t\ge0}\left[\int_0^t \left(\frac ts\right)^{1-\varepsilon}
		\frac{\e^{-\beta(t-s)}}{\sqrt{t-s}}\,\d s\right]
		\text{ and }
		D(\beta) :=
		\sup_{t\ge0}\left[\int_0^t \left(\frac ts\right)^{1-\varepsilon}
		\e^{-\beta(t-s)}\,\d s\right].
	\]
	It is possible to check [see Lemma \ref{lem:IJ} of
	the appendix] that
	\[
		C(\beta)+ D(\beta)
		\le \frac{13}{\varepsilon\sqrt\beta}\qquad\text{for all
		$\varepsilon\in(0\,,1)$ and $\beta\ge1$}.
	\]
	Thus, we obtain the recursive inequalities,
	\[
		\cU^{(n+1)}(\beta)
		\le 8\|u_0\|_{L^\infty}^{2\varepsilon} \|u_0\|_{L^1}^{2(1-\varepsilon)}
		+\frac{208k\lambda^2\lip_\sigma^2}{\varepsilon\sqrt\beta}
		 \cU^{(n)}(\beta),
	\]
	valid for all integers $n\ge0$, and reals $\beta\ge1$ and $\varepsilon\in(0\,,1)$.
	We can replace $\beta$ with
	\[
		\beta_* := \frac{173056 k^2\lambda^4
		\lip_\sigma^4}{\varepsilon^2},
	\]
	in order to see that for all $n\ge0$,
	\begin{equation}\label{boop}
		\cU^{(n+1)}(\beta_*) \le 
		 8\|u_0\|_{L^\infty}^{2\varepsilon} \|u_0\|_{L^1}^{2(1-\varepsilon)}+ 
		\tfrac12\cU^{(n)}(\beta_*),
	\end{equation}
	for all $n\ge0$,
	provided that $\beta_*\ge 1$.
	Note that
	\[
		\cU^{(0)}(\beta) =  \sup_{t\ge0} \left[ t^{1-\varepsilon}
		\e^{-\beta t}\right]\|u_0\|_{L^\infty}^2\le\|u_0\|_{L^\infty}^2,
	\]
	for every $\varepsilon\in(0\,,1)$ and $\beta\ge1$. In particular, 
	\eqref{boop} implies that: (i)
	$\sup_{n\ge 0}\cU^{(n)}(\beta_*) <\infty$; 
	and (ii) For all $n\ge0$, and provided that $\beta_*\ge 1$, 
	\[
		\limsup_{n\to\infty}\,\cU^{(n+1)}(\beta_*) 
		\le 16\|u_0\|_{L^\infty}^{2\varepsilon} \|u_0\|_{L^1}^{2(1-\varepsilon)},
	\]
	The left-hand side is greater than or equal to $t^{1-\varepsilon}\e^{-\beta_* t}
	\| u(t\,,x\,;\lambda)  \|_k^2$ uniformly for all $t>0$. This is
	thanks to Fatou's lemma and \eqref{Picard}. Therefore,
	for all $x\in\mathbb{T}$ and $t\ge0$,
	\[
		\left\| u(t\,,x\,;\lambda) \right\|_k^2 \le 16 t^{-1+\varepsilon}\e^{\beta_*t}\cdot
		\|u_0\|_{L^\infty}^{2\varepsilon} \|u_0\|_{L^1}^{2(1-\varepsilon)},
	\]
	provided that $k$ is large enough to ensure that
	$\beta_*\ge 1$. This is  equivalent to the assertion of the proposition.
\end{proof}

For our next technical result, let us recall the random field $\cI$ from \eqref{I}. 

\begin{lemma}\label{lem:modulus:x}
	Let $c:=208\sqrt 2\approx 294.2$. 
	For every $\varepsilon\in(0\,,1)$ and $\delta\in(0\,,1)$ there
	exists a finite constant $C=C(\varepsilon\,,\delta\,,\lip_\sigma)>0$---not 
	depending on $u_0$---such that uniformly
	for all real numbers $\lambda>0$, $x,y\in\mathbb{T}$, $k\ge 2$, and $t>0$
	that satisfy $k\lambda^2 \ge \varepsilon(c\lip^2_\sigma)^{-1},$
	\[
		\E\left(\left| \cI_t(x\,;\lambda)-\cI_t(y\,;\lambda) \right|^k\right)
		\le C^kk^{k/2}|x-y|^{\delta k/2} \max\left\{1\,, \frac{1}{t^{k/2}} \right\}
		\exp\left(\frac{c^2}{\varepsilon^2}
		k^3\lambda^4\lip_\sigma^4t\right)
		\|u_0\|_{L^\infty}^{k\varepsilon}\|u_0\|_{L^1}^{k(1-\varepsilon)}.
	\]
\end{lemma}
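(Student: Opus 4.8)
The plan is to run the argument used for the term $\cT_2$ in the proof of Proposition \ref{pr:modulus:UB}, with the single heat kernel $p_{t-s}(x\,,\cdot)$ replaced everywhere by the spatial difference $p_{t-s}(x\,,\cdot)-p_{t-s}(y\,,\cdot)$, and with Proposition \ref{pr:moment:UB:improved} playing the role that Proposition \ref{pr:moment:UB} played there. Concretely, I would use the precise form of the moment bound read off from the proof of Proposition \ref{pr:moment:UB:improved}, namely $\|u(s\,,z\,;\lambda)\|_k^2\le 16\,s^{-(1-\varepsilon)}\e^{\beta_* s}\|u_0\|_{L^\infty}^{2\varepsilon}\|u_0\|_{L^1}^{2(1-\varepsilon)}$ with $\beta_*:=2c^2\varepsilon^{-2}k^2\lambda^4\lip_\sigma^4$, valid for all $s>0$ once $k\lambda^2\ge\varepsilon(c\lip_\sigma^2)^{-1}$. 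Starting from \eqref{I} one writes $\cI_t(x\,;\lambda)-\cI_t(y\,;\lambda)=\lambda\int_{(0,t)\times\mathbb{T}}[p_{t-s}(x\,,z)-p_{t-s}(y\,,z)]\sigma(u(s\,,z\,;\lambda))\,W(\d s\,\d z)$, applies the Burkholder--Davis--Gundy-type bound for stochastic convolutions \cite[Pr.\ 4.4, p.\ 36]{cbms} together with the cone bound $|\sigma(a)|\le\lip_\sigma|a|$ of \eqref{cone}, and then inserts the moment estimate. This reduces everything to the deterministic inequality
\[
	\bigl\|\cI_t(x\,;\lambda)-\cI_t(y\,;\lambda)\bigr\|_k^2\le 64\,k\lambda^2\lip_\sigma^2\|u_0\|_{L^\infty}^{2\varepsilon}\|u_0\|_{L^1}^{2(1-\varepsilon)}\cdot\int_0^t s^{-(1-\varepsilon)}\e^{\beta_* s}\,\Phi_{t-s}(x\,,y)\,\d s,
\]
where $\Phi_\tau(x\,,y):=\int_{-1}^1[p_\tau(x\,,z)-p_\tau(y\,,z)]^2\,\d z$.

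The analytic heart of the matter is a spatial modulus-of-continuity estimate for the periodic heat kernel, the spatial companion of Lemma \ref{lem:c_l}: for each $\delta\in(0\,,1)$ there is $C_\delta$ with $\Phi_\tau(x\,,y)\le C_\delta\,|x-y|^{\delta}\,\tau^{-(1+\delta)/2}$ for every $\tau>0$ and $x,y\in\mathbb{T}$. I would derive this from the eigenfunction expansion $p_\tau(x\,,y)=\tfrac12\sum_{n\in\Z}\e^{-\pi^2 n^2\tau}\cos(\pi n(x-y))$: Parseval gives $\Phi_\tau(x\,,y)=\sum_{n\neq0}\e^{-2\pi^2 n^2\tau}(1-\cos(\pi n(x-y)))$ (the constant mode drops out), after which $1-\cos(\pi n(x-y))\le\min\{2\,,\tfrac12\pi^2 n^2|x-y|^2\}\le C\,|n|^{\delta}|x-y|^{\delta}$ reduces the claim to the elementary bound $\sum_{n\neq0}|n|^{\delta}\e^{-2\pi^2 n^2\tau}\le C_\delta\,\tau^{-(1+\delta)/2}$, which follows by comparison with $\int_\R|\xi|^{\delta}\e^{-2\pi^2\xi^2\tau}\,\d\xi$ (the residual discrete correction being $O(\e^{-2\pi^2\tau})$, hence harmless uniformly in $\tau>0$).

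Feeding this estimate into the $s$-integral, substituting $r=t-s$, factoring out $\e^{\beta_* t}$, and splitting at $r=t/2$ --- exactly the bookkeeping already carried out after \eqref{BOPO} and in Lemma \ref{lem:IJ} --- I would bound the near-diagonal part by $\int_0^\infty r^{-(1+\delta)/2}\e^{-\beta_* r}\,\d r=\Gamma(\tfrac{1-\delta}{2})\beta_*^{-(1-\delta)/2}$ and the complementary part by $\e^{-\beta_* t/2}\le C_\delta(\beta_* t)^{-(1-\delta)/2}$ (legitimate since $\beta_*\ge1$), reaching
\[
	\int_0^t s^{-(1-\varepsilon)}\e^{\beta_* s}\,\Phi_{t-s}(x\,,y)\,\d s\le C_{\varepsilon,\delta}\,|x-y|^{\delta}\,\beta_*^{-(1-\delta)/2}\,\e^{\beta_* t}\max\bigl\{1\,,t^{-(1-\varepsilon)}\bigr\}.
\]

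Finally one collects constants and raises to the $k/2$-th power. The exponential becomes $\exp(\beta_* t\,k/2)=\exp(c^2\varepsilon^{-2}k^3\lambda^4\lip_\sigma^4 t)$ \emph{exactly}, because $\beta_* k/2=c^2\varepsilon^{-2}k^3\lambda^4\lip_\sigma^4$ with $c=208\sqrt2$; the factor $|x-y|^{\delta k/2}$ and the $\|u_0\|$-factors appear as required; the temporal factor $\max\{1\,,t^{-(1-\varepsilon)k/2}\}$ is dominated by $\max\{1\,,t^{-k/2}\}$ since $1-\varepsilon<1$; and the remaining prefactor $64\,k\lambda^2\lip_\sigma^2\,\beta_*^{-(1-\delta)/2}$ must be reorganized into $C^kk^{k/2}$. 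I expect this last step of constant-tracking, together with the heat-kernel estimate $\Phi_\tau\le C_\delta|x-y|^\delta\tau^{-(1+\delta)/2}$, to be the only real obstacle: the splitting of the time integral has to be arranged so that the negative power $\beta_*^{-(1-\delta)/2}$ absorbs the factor $\lambda^2$ produced by the stochastic-integral step and so that nothing leaks into the exponential rate --- everything else is a routine repetition of the computations already performed for Proposition \ref{pr:modulus:UB}.
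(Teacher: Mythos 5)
Your architecture is essentially the paper's: the BDG-type bound for the stochastic convolution, the cone condition, the moment bound of Proposition \ref{pr:moment:UB:improved}, a spatial modulus estimate for $\Phi_\tau(x,y):=\int_{-1}^1[p_\tau(x,z)-p_\tau(y,z)]^2\,\d z$, and then a time integration. Two steps differ in method but not in substance. For the kernel estimate, the paper proves $\Phi_\tau(x,y)\le C|x-y|^\delta/(\tau^{(\delta+1)/2}\wedge\tau^{\delta/2})$ (Lemma \ref{lem:p-p:x:2}) by writing $\Phi_\tau=p_{2\tau}(x,x)+p_{2\tau}(y,y)-2p_{2\tau}(x,y)$ and estimating the Gaussian summands of \eqref{eq:p} in real space (Lemma \ref{lem:p-p:x:1}), then interpolating against the trivial $L^2$ bound; your Parseval computation is correct and yields the marginally cleaner $\tau^{-(1+\delta)/2}$ for all $\tau$, which is just as good. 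For the time integral, the paper does \emph{not} keep the exponential inside: it bounds $\e^{\beta_*s}\le\e^{\beta_*t}$ immediately and then estimates $\int_0^t s^{-(1-\varepsilon)}\Phi_{t-s}\,\d s\le C|x-y|^\delta\max\{1,1/t\}$ by splitting at $t/2$, whereas you retain $\e^{-\beta_* r}$ to harvest the factor $\beta_*^{-(1-\delta)/2}$, in the style of the proofs of Propositions \ref{pr:moment:UB} and \ref{pr:moment:UB:improved}.

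The one step that does not close is exactly the one you flag, and it is worth being precise about why. Since $\beta_*\asymp\varepsilon^{-2}(k\lambda^2\lip_\sigma^2)^2$, you get $k\lambda^2\lip_\sigma^2\cdot\beta_*^{-(1-\delta)/2}\asymp\varepsilon^{1-\delta}(k\lambda^2\lip_\sigma^2)^{\delta}$: the negative power of $\beta_*$ cancels only $\lambda^{2(1-\delta)}$ of the $\lambda^2$ produced by the BDG step, never all of it (that would need $\delta\le0$). After raising to the power $k/2$ your bound is therefore the stated one multiplied by roughly $(Ck\lambda^2\lip_\sigma^2)^{\delta k/2}$, and the residual $\lambda^{\delta k}$ is not of the form $C(\varepsilon,\delta,\lip_\sigma)^k k^{k/2}$, since the lemma demands uniformity over all $\lambda>0$. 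You should know that the paper's own proof is no better here: it passes from $4k\lambda^2\lip_\sigma^2\cdot 16$ to ``$64k\lip_\sigma^2$'' at the corresponding line, silently discarding the same $\lambda^2$. If one insists on the statement exactly as written, the honest repair is to absorb $\lambda^k$ into $\max\{1,t^{-k/2}\}\exp(C'k\lambda^4t)$ by distinguishing $t\le\lambda^{-2}$ from $t\ge\lambda^{-2}$ (using $\e^x\ge x$ in the second case), at the price of a slightly larger numerical constant in the exponential rate; otherwise one simply carries a power of $\lambda$ in the prefactor, which is harmless in the applications because the lemma is always used alongside a factor $\e^{-c\lambda^2t}$ with $t$ bounded away from zero. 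Everything else in your proposal checks out.
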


\begin{proof}
	We apply the Burkholder-Davis-Gundy-type inequality, as in the proof of Proposition \ref{pr:modulus:UB},
	in order to see that
	\begin{align*}
		&\left\| \cI_t(x\,;\lambda) - \cI_t(y\,;\lambda)\right\|_k^2
			\le 4k\lambda^2\int_0^t\d s\int_{-1}^1\d z\ 
			\left[ p_{t-s}(x\,,z) - p_{t-s}(y\,,z)\right]^2
			\left\| \sigma(u(s\,,z\,;\lambda))\right\|_k^2\\
		&\le 4k\lambda^2\lip_\sigma^2\int_0^t\d s\int_{-1}^1\d z\ 
			\left[ p_{t-s}(x\,,z) - p_{t-s}(y\,,z)\right]^2
			\left\| u(s\,,z\,;\lambda)\right\|_k^2\\
		&\le 64k\lip_\sigma^2\exp\left(\frac{2c^2}{\varepsilon^2}
			k^2\lambda^4\lip_\sigma^4t\right)
			\|u_0\|_{L^\infty}^{2\varepsilon}
			\|u_0\|_{L^1}^{2(1-\varepsilon)}\cdot\int_0^t
			\frac{\d s}{s^{1-\varepsilon}}\int_{-1}^1\d z\ 
			\left[ p_{t-s}(x\,,z) - p_{t-s}(y\,,z)\right]^2.
	\end{align*}
	The final inequality is a consequence of Proposition 
	\ref{pr:moment:UB:improved}, which is why we need the condition
	$k\lambda^2 \ge \varepsilon(c\lip^2_\sigma)^{-1}$.
	Apply Lemma \ref{lem:p-p:x:2}
	to see that
	\begin{align*}
		\int_0^t\frac{\d s}{s^{1-\varepsilon}}\int_{-1}^1\d z\ 
			\left[ p_{t-s}(x\,,z) - p_{t-s}(y\,,z)\right]^2
			&\le C'|x-y|^{\delta}\int_0^t
			\frac{\d s}{s^{1-\varepsilon}\times \left( 
			(t-s)^{(\delta+1)/2}\wedge(t-s)^{\delta/2}\right)}\\
		&\le C''\frac{|x-y|^\delta}{t^{\delta/2-\varepsilon}} 
			\max\left\{1\,, 1/\sqrt{t} \right\}\\
		&\le C''|x-y|^\delta\max\left\{1\,, 1/t \right\},
	\end{align*}
	where $C'$ and $C''$ are finite constants that depend only on 
	$\varepsilon$ and $\delta$. 
	The second inequality can be obtained by split the integral into 
	$\int_0^{t/2}\cdots\, \d s$ and $\int_{t/2}^{t}\cdots\,\d s$.
	The first integral is less than 
	\[
		\int_{0}^{t/2}\frac{\d s}{s^{1-\varepsilon}
		\min\left((t/2)^{(\delta+1)/2}\,, (t/2)^{\delta/2}\right)},
	\]
	and the second one is less than the same bound by a similar argument.
	The last inequality above comes from the fact that 
	$(\delta/2)-\varepsilon+(1/2)<1$ for all $\delta \in (0\,,1)$ and 
	$\varepsilon \in (0\,,1)$, so that $(1/t)^{\delta/2-\varepsilon+1/2} \leq 1/t$ for $t<1$. 
	We combine the preceding two displays to conclude the proof of the proposition.
\end{proof}

We can combine Lemmas \ref{lem:P_th(x)-P_th(y)} and \ref{lem:modulus:x} 
together with \eqref{mild} in order to  deduce the following.

\begin{proposition}\label{pr:modulus:x}
	Let $c:=208\sqrt 2\approx 294.2$. 
	For every $\varepsilon\in(0\,,1)$ and $\delta\in(0\,,1)$ there
	exists a finite constant $C=C(\varepsilon\,,\delta\,,\lip_\sigma)>0$---not 
	depending on $u_0$---such that uniformly
	for all real numbers $\lambda>0$, $x,y\in\mathbb{T}$, $k\ge 2$, and $t>0$
	that satisfy $k\lambda^2 \ge \varepsilon(c\lip^2_\sigma)^{-1},$
	\begin{align*}
		&\E\left(\left| u(t\,,x\,;\lambda) - u(t\,,y\,;\lambda) \right|^k\right)\\
		&\le C^kk^{k/2}\left(|x-y|^{\delta k/2}+
			|x-y|^{\varepsilon k/2}\right) \max\left\{1\,, \frac{1}{t^{k/2}}\right\}  
			\exp\left(\frac{c^2}{\varepsilon^2}
			k^3\lambda^4\lip_\sigma^4t\right)
			\|u_0\|_{L^\infty}^{k\varepsilon}\|u_0\|_{L^1}^{k(1-\varepsilon)}.
	\end{align*}
\end{proposition}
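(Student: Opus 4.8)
The plan is to work directly from the mild formulation \eqref{mild}, which yields the pointwise decomposition
\[
	u(t\,,x\,;\lambda) - u(t\,,y\,;\lambda)
	= \left[(P_tu_0)(x) - (P_tu_0)(y)\right]
	+ \left[\cI_t(x\,;\lambda) - \cI_t(y\,;\lambda)\right].
\]
Raising the absolute value to the $k$-th power, taking expectations, and using $|a+b|^k\le 2^{k-1}(|a|^k+|b|^k)$, it suffices to bound each of the two terms separately; the resulting loss $2^{k-1}$ is harmless since it can be absorbed into the $C^k$ factor.

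For the stochastic term, Lemma \ref{lem:modulus:x} already supplies exactly the $|x-y|^{\delta k/2}$ contribution, together with the factor $\max\{1\,,t^{-k/2}\}$, the exponential $\exp(c^2\varepsilon^{-2}k^3\lambda^4\lip_\sigma^4 t)$, and the $\|u_0\|_{L^\infty}^{k\varepsilon}\|u_0\|_{L^1}^{k(1-\varepsilon)}$ product; this is precisely the source of the hypothesis $k\lambda^2\ge\varepsilon(c\lip_\sigma^2)^{-1}$, which is inherited from Proposition \ref{pr:moment:UB:improved}. For the deterministic term I would invoke Lemma \ref{lem:P_th(x)-P_th(y)}, which controls the spatial modulus of continuity of $P_tu_0$ by the $L^\infty$ and $L^1$ norms of $u_0$ and produces the $|x-y|^{\varepsilon k/2}$ contribution (with a negative power of $t$ no worse than $\max\{1\,,t^{-k/2}\}$). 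Since the deterministic part carries no exponential-in-$t$ growth, it is trivially dominated by the same expression that bounds the stochastic part, after at most enlarging the constant $C=C(\varepsilon\,,\delta\,,\lip_\sigma)$.

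Combining the two bounds and collecting the worse of the $t$-powers and of the constants then gives the stated inequality, with both Hölder exponents $\delta k/2$ and $\varepsilon k/2$ appearing on the right-hand side. The only point requiring care—hence the ``main obstacle''—is bookkeeping rather than any new idea: one must check that the power of $t$ emerging from Lemma \ref{lem:P_th(x)-P_th(y)} is genuinely dominated by $\max\{1\,,t^{-k/2}\}$ uniformly in $t$, and that the final constant $C$ stays independent of $u_0$, $\lambda$, and $k$, with the full $k$-dependence isolated as the explicit $C^k k^{k/2}$ prefactor.
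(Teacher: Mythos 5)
Your proposal is correct and is exactly the paper's argument: the paper proves this proposition by combining Lemma \ref{lem:modulus:x} (for the stochastic convolution, yielding the $|x-y|^{\delta k/2}$ term) with Lemma \ref{lem:P_th(x)-P_th(y)} (for $P_tu_0$, yielding the $|x-y|^{\varepsilon k/2}$ term) via the mild form \eqref{mild}. The bookkeeping you flag works out just as you describe, since the deterministic bound carries the factor $\max\{1,t^{-1/2}\}^k=\max\{1,t^{-k/2}\}$ and is dominated by the stochastic bound after enlarging $C$.
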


The preceding is a moment continuity result about $x\mapsto u(t\,,x\,;\lambda)$.
The following matches that result with a moment continuity estimate for
$t\mapsto u(t\,,x\,;\lambda)$.

\begin{proposition}\label{pr:modulus:t}
	Let $c:=208\sqrt 2\approx 294.2$. 
	For all real numbers $k\ge 2$, $\lambda>0$, 
	$\delta\in(0\,,\nicefrac14)$, $\varepsilon\in(0\,,1)$, and 
	$t>2\delta$ that satisfy $k\lambda^2 \ge \varepsilon(c\lip^2_\sigma)^{-1},$
	\begin{align*}
		\E\left(\left|u(t+\delta\, ,x\,;\lambda) - u(t\,,x\,;\lambda)\right|^k\right)
		\le 74^k \left[\frac{\delta^\varepsilon}{t^{1+\varepsilon}}+  
		\frac{\lambda^2 k\sqrt{\delta}\, \lip_\sigma^2}{%
		\varepsilon t^{1-\varepsilon}}\right]^{k/2}
		\exp\left(\frac{c^2}{\varepsilon^2}k^3\lambda^4
		\lip_\sigma^4 t\right) \cdot
		\|u_0\|_{L^\infty}^{k\varepsilon} 
		\|u_0\|_{L^1}^{k(1-\varepsilon)}.
	\end{align*}
\end{proposition}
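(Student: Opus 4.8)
The plan is to decompose the temporal increment $u(t+\delta\,,x\,;\lambda)-u(t\,,x\,;\lambda)$ using the mild formulation \eqref{mild}, and to control each resulting term using the improved moment bound from Proposition \ref{pr:moment:UB:improved}. Writing out \eqref{mild} at times $t+\delta$ and $t$, the difference splits into three pieces: the ``semigroup'' term $(P_{t+\delta}u_0)(x)-(P_tu_0)(x)$; the ``new noise'' term $\lambda\int_{(t,t+\delta)\times\mathbb{T}}p_{t+\delta-s}(x\,,y)\sigma(u(s\,,y\,;\lambda))\,W(\d s\,\d y)$ coming from the fresh increment of the noise on the time window $(t\,,t+\delta)$; and the ``old noise'' term $\lambda\int_{(0,t)\times\mathbb{T}}[p_{t+\delta-s}(x\,,y)-p_{t-s}(x\,,y)]\sigma(u(s\,,y\,;\lambda))\,W(\d s\,\d y)$, where the change-of-kernel over the common time interval enters. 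I would estimate the $k$-th moments of each of these three pieces separately, then combine via $(a+b+c)^2\le 3(a^2+b^2+c^2)$ (or the analogous $L^k(\P)$ triangle inequality squared).

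The key estimates are as follows. For the semigroup term I would use a standard parabolic bound of the form $\|P_{t+\delta}u_0-P_tu_0\|_{L^\infty}\le C\,\delta^{\varepsilon/2}t^{-(1+\varepsilon)/2}\|u_0\|_{L^\infty}^{\varepsilon}\|u_0\|_{L^1}^{1-\varepsilon}$, obtained by interpolating between the trivial $L^\infty$ bound and a $\delta/t$-type bound on $\|(P_\delta-\mathrm{Id})P_t u_0\|_{L^\infty}$, in the same spirit as Lemma \ref{lem:semigp}; this explains the $\delta^\varepsilon/t^{1+\varepsilon}$ summand inside the bracket. For the two stochastic terms I would apply the Burkholder--Davis--Gundy-type inequality \cite[Pr.\ 4.4, p.\ 36]{cbms} exactly as in the proofs of Proposition \ref{pr:modulus:UB} and Lemma \ref{lem:modulus:x}. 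For the new-noise term this produces $4k\lambda^2\lip_\sigma^2\int_t^{t+\delta}\d s\int_{-1}^1\d y\,[p_{t+\delta-s}(x\,,y)]^2\|u(s\,,y\,;\lambda)\|_k^2$; bounding $\int[p_r(x\,,y)]^2\,\d y=p_{2r}(x\,,x)\le 2(r^{-1/2}+1)$ via Lemma \ref{lem:c_l}, pulling out the moment bound of Proposition \ref{pr:moment:UB:improved} (which forces the hypothesis $k\lambda^2\ge\varepsilon(c\lip_\sigma^2)^{-1}$ and the growth factor $\exp(c^2\varepsilon^{-2}k^3\lambda^4\lip_\sigma^4 t)$), and integrating $\int_0^\delta r^{-1/2}\,\d r=2\sqrt\delta$ yields a contribution of order $\lambda^2 k\sqrt\delta\,\lip_\sigma^2\cdot t^{-(1-\varepsilon)}$ times the norms of $u_0$; this is the second summand in the bracket. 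For the old-noise term, BDG gives $4k\lambda^2\lip_\sigma^2\int_0^t\d s\int_{-1}^1\d y\,[p_{t+\delta-s}(x\,,y)-p_{t-s}(x\,,y)]^2\|u(s\,,y\,;\lambda)\|_k^2$, and here I would invoke a kernel-increment estimate (the temporal analogue of Lemma \ref{lem:p-p:x:2}, of the type $\int_{-1}^1[p_{r+\delta}(x\,,y)-p_r(x\,,y)]^2\,\d y\le C\,\delta^{1/2}(r^{-1}\wedge r^{-3/2})$, or a $\delta^{2\theta}$-Hölder-in-time version with $\theta<1/4$) together again with Proposition \ref{pr:moment:UB:improved}; a careful integration, splitting $\int_0^t$ into $\int_0^{t/2}$ and $\int_{t/2}^t$ as in the proof of Lemma \ref{lem:modulus:x}, shows this term is dominated by the same two summands already present, so it does not enlarge the final bound.

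Once the three contributions are in hand, I would add them, factor out $\exp(c^2\varepsilon^{-2}k^3\lambda^4\lip_\sigma^4 t)\|u_0\|_{L^\infty}^{2\varepsilon}\|u_0\|_{L^1}^{2(1-\varepsilon)}$, and bound the sum of the remaining numerical prefactors by (something like) $74^2$ raised to the appropriate power, using $t>2\delta$ and $\delta<1/4$ to absorb lower-order pieces (e.g.\ to compare $\delta^{1/2}$ against $\delta^\varepsilon$ and $\sqrt\delta$, and to discard the ``$+1$'' terms coming from the torus heat-kernel bound since on the relevant window $r\le\delta<1$). Taking $k/2$-th powers and then $L^k(\P)$-norms gives the stated inequality.

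The main obstacle I anticipate is the old-noise term: one needs a sufficiently sharp $L^2$-in-space estimate for the temporal increment $p_{r+\delta}-p_r$ of the periodic heat kernel that is integrable against $s^{-(1-\varepsilon)}$ near $s=0$ (i.e.\ near $r=t$) and produces exactly a $\sqrt\delta$ (or $\delta^{2\delta}$) gain without generating a power of $t$ worse than $t^{-(1-\varepsilon)}$. Getting the singularity bookkeeping right — so that the old-noise contribution is genuinely subsumed by the explicit bracket $[\delta^\varepsilon t^{-(1+\varepsilon)}+\lambda^2 k\sqrt\delta\,\lip_\sigma^2\varepsilon^{-1}t^{-(1-\varepsilon)}]$ rather than adding a new term — is the delicate step, and it is precisely where the restriction $t>2\delta$ and $\delta\in(0\,,1/4)$ is used.
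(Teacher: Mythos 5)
Your proposal follows essentially the same route as the paper: the same three-term decomposition of the increment (semigroup term via the analogue of Lemma \ref{lem:P_th(x)-P_sh(x)}, new-noise term via BDG, Lemma \ref{lem:c_l} and Proposition \ref{pr:moment:UB:improved}, old-noise term via BDG and a temporal $L^2$-increment bound on the kernel), with the split of the $s$-integral and the hypotheses $t>2\delta$, $\delta<1/4$ entering exactly where you say they do. The only caveat is the parenthetical kernel bound you write for the old-noise term, $C\,\delta^{1/2}\bigl((t-s)^{-1}\wedge(t-s)^{-3/2}\bigr)$: although it is a true consequence of the sharper estimate, it is not integrable near $s=t$ and so cannot be used as written; the paper's Lemma \ref{lem:p-p:t:1} (proved by Poisson summation) gives the form $\sqrt{\pi/(2r)}\,\min\bigl(1\,,\delta/(4r)\bigr)$, which is integrable at $r=0$ and, after splitting at $r\approx\delta$, yields precisely the $\varepsilon^{-1}\sqrt{\delta}\,t^{-(1-\varepsilon)}$ contribution whose necessity you correctly identify as the delicate point.
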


\begin{proof}
	In accord with \eqref{mild} we can write
	\begin{equation}\label{I:T1:T2}
		\left\| \cI_{t+\delta}(x\,;\lambda) - \cI_t(x\,;\lambda)\right\|_k^2
		\le 2\lambda^2\left(\| T_1\|_k^2 + \|T_2 \|_k^2\right),
	\end{equation}
	where
	\begin{align*}
		T_1 & := \int_{(t,t+\delta)\times\mathbb{T}}
			p_{t-s+\delta}(x\,,z)\sigma(u(s\,,z\,;\lambda))
			\, W(\d s\,\d z),\\
		T_2 &:= \int_{(0,t)\times\mathbb{T}} 
			\left[p_{t-s+\delta}(x\,,z)-p_{t-s}(x\,,z)\right]
			\sigma(u(s\,,z\,;\lambda))
			\, W(\d s\,\d z).
	\end{align*}
	Now we apply the Burkholder-Davis-Gundy-type inequality, as in the proof of Proposition \ref{pr:modulus:UB},
	in order to see that
	\begin{align*}
		\|T_1\|_k^2 &\le 4k\lip_\sigma^2
			\int_t^{t+\delta}\d s\int_{-1}^1\d z\
			\left[ p_{t-s+\delta}(x\,,z)\right]^2\left\| u(s\,,z\,;\lambda)\right\|_k^2\\
		&\le \frac{64k \lip_\sigma^2}{%
			t^{1-\varepsilon}}\exp\left(\frac{2c^2}{\varepsilon^2}k^2\lambda^4
			\lip_\sigma^4 t\right) \|u_0\|_{L^\infty}^{2\varepsilon} 
			\|u_0\|_{L^1}^{2(1-\varepsilon)}\int_t^{t+\delta}\d s\int_{-1}^1\d z\
			\left[ p_{t-s+\delta}(x\,,z)\right]^2;
	\end{align*}
	consult Proposition \ref{pr:moment:UB:improved} for the last line.
	We appeal first to the semigroup property of $p_t$ and then to 
	Lemma \ref{lem:c_l} below in order to see from the bound $\delta\in(0\,,1/4)$ that
	\[
		\int_t^{t+\delta}\d s\int_{-1}^1\d z\
		\left[ p_{t-s+\delta}(x\,,z)\right]^2 \le 2\int_0^\delta\left(\frac{1}{\sqrt s}+1\right)
		\d s\le 4\sqrt{\delta}+2\delta<5\sqrt\delta,
	\]
	whence it follows that
	\begin{equation}\label{eq:T1:t}
		\|T_1\|_k^2 
		\le \frac{320k \lip_\sigma^2}{%
		t^{1-\varepsilon}}\exp\left(\frac{2c^2}{\varepsilon^2}k^2\lambda^4
		\lip_\sigma^4 t\right) \sqrt{\delta}\cdot
		\|u_0\|_{L^\infty}^{2\varepsilon} 
		\|u_0\|_{L^1}^{2(1-\varepsilon)}.
	\end{equation}
	Similarly, we have
	\begin{equation}\label{eq:T2:t}\begin{split}
		\|T_2\|_k^2 &\le 4k\lip_\sigma^2\int_0^t\d s\int_{-1}^1\d z\
			\left[p_{t-s+\delta}(x\,,z)-p_{t-s}(x\,,z)\right]^2
			\| u(s\,,z\,;\lambda)\|_k^2\\
		& \le 64k \lip_\sigma^2\exp\left(\frac{2c^2}{\varepsilon^2}k^2\lambda^4
			\lip_\sigma^4 t\right) \|u_0\|_{L^\infty}^{2\varepsilon} 
			\|u_0\|_{L^1}^{2(1-\varepsilon)}\times Q_t(\delta),
	\end{split}\end{equation}
	where
	\[
		Q_t(\delta) := \int_0^t\frac{\d s}{s^{1-\varepsilon}}\int_{-1}^1\d z\
		\left[p_{t-s+\delta}(x\,,z)-p_{t-s}(x\,,z)\right]^2.
	\]
	Lemma \ref{lem:p-p:t:1} below tells us that
	\begin{align*}
		Q_t(\delta)&\le \sqrt{\frac{\pi}{2}}\int_0^t
			\frac{\d s}{s^{1-\varepsilon}\sqrt{t-s}}\min\left(1\,,\frac{\delta}{t-s}\right)\\
		&=  \sqrt{\frac{\pi}{2}} t^{\varepsilon-(1/2)}\int_0^1
			\frac{\d r}{r^{1-\varepsilon}\sqrt{1-r}}\min\left(1\,,\frac{\delta/t}{1-r}\right).
	\end{align*}
	If $t>2\delta$, then we write
	\[
		Q_t(\delta) \le \sqrt{\frac{\pi}{2}} \delta 
		t^{\varepsilon-(3/2)}\int_0^{1-(\delta/t)}
		\frac{\d r}{r^{1-\varepsilon}(1-r)^{3/2}}
		+ \sqrt{\frac{\pi}{2}} t^{\varepsilon-(1/2)}\int_{1-(\delta/t)}^1
		\frac{\d r}{r^{1-\varepsilon}\sqrt{1-r}}.
	\]
	We can write the first integral as
	\begin{align*}
		\int_0^{1/2}\frac{\d r}{r^{1-\varepsilon}(1-r)^{3/2}} +
			\int_{1/2}^{1-(\delta/t)}\frac{\d r}{r^{1-\varepsilon}(1-r)^{3/2}}
			&\le \frac{2^{(3/2)-\varepsilon}}{\varepsilon} + 
			2^{1-\varepsilon}\int_{\delta/t}^\infty
			\frac{\d r}{r^{3/2}}\\
		&= \frac{2^{(3/2)-\varepsilon}}{\varepsilon} + 2^{2-\varepsilon}
			(\delta / t)^{-1/2}\\
		&\le \frac{8}{\varepsilon}(\delta / t )^{-1/2}.
	\end{align*}
	[We have used the bound $t>2\delta$ in the last line.]
	And the second integral is bounded from above by
	\[
		\left( 1-\frac\delta t\right)^{-1+\varepsilon}\int_0^{\delta /t}\frac{\d r}{\sqrt r}
		= 2\left( 1-\frac\delta t\right)^{-1+\varepsilon}\sqrt{\delta/t}
		< 4\sqrt{\delta/t}.
	\]
	This yields
	\[
		Q_t(\delta) \le
		\frac{16}{\varepsilon}\cdot
		\frac{\sqrt{\delta}}{t^{1-\varepsilon}}.
	\]
	We may apply this inequality in \eqref{eq:T2:t} in order to see that
	\[
		\|T_2\|_k^2 \le \frac{1024k \lip_\sigma^2}{\varepsilon\cdot t^{1-\varepsilon}}
		\exp\left(\frac{2c^2}{\varepsilon^2}k^2\lambda^4
		\lip_\sigma^4 t\right)\sqrt\delta\cdot \|u_0\|_{L^\infty}^{2\varepsilon} 
		\|u_0\|_{L^1}^{2(1-\varepsilon)}.
	\]
	We combine this with \eqref{eq:T1:t} and \eqref{I:T1:T2} in order to deduce
	the following:
	\[
		\left\| \cI_{t+\delta}(x\,;\lambda) - \cI_t(x\,;\lambda)\right\|_k^2
		\le \lambda^2 \frac{2688k \lip_\sigma^2}{%
		t^{1-\varepsilon}}\exp\left(\frac{2c^2}{\varepsilon^2}k^2\lambda^4
		\lip_\sigma^4 t\right) \frac{\sqrt{\delta}}{\varepsilon}\cdot
		\|u_0\|_{L^\infty}^{2\varepsilon} 
		\|u_0\|_{L^1}^{2(1-\varepsilon)}.
	\]
	This and the argument in Lemma \ref{lem:P_th(x)-P_sh(x)} below together yield
	\begin{align*}
		\| u(t+\delta\,,x\,;\lambda) - u(t\,,x\,;\lambda)\|_k^2 \le
			2 \| (P_{t+\delta} u_0)(x) - (P_t u_0)(x)\|_k^2
			+ 2\left\| \cI_{t+\delta}(x\,;\lambda) - \cI_t(x\,;\lambda)\right\|_k^2&\\
		\le \left[\frac{8\delta^\varepsilon}{t^{1+\varepsilon}}
			+ \frac{5376k\sqrt{\delta}\, \lambda^2 \lip_\sigma^2}{%
			\varepsilon t^{1-\varepsilon}}\exp\left(\frac{2c^2}{\varepsilon^2}k^2\lambda^4
			\lip_\sigma^4 t\right)\right]\cdot
			\|u_0\|_{L^\infty}^{2\varepsilon} 
			\|u_0\|_{L^1}^{2(1-\varepsilon)}.&
	\end{align*}
	This easily implies the result.
\end{proof}

Before we derive Proposition \ref{pr:improved}---the main result
of this section---we pause and quickly establish Proposition \ref{pr:simultaneous}.

\begin{proof}[Proof of Proposition \ref{pr:simultaneous}]
	We can combine Propositions \ref{pr:modulus:UB}, \ref{pr:modulus:x},
	and \ref{pr:modulus:t} together with the Kolmogorov continuity theorem
	in order to see that $(t\,,x\,,\lambda)\mapsto u(t\,,x\,;\lambda)$ has
	a continuous modification on $(0\,,\infty)\times\R\times(0\,,\infty)$.
	The proofs of Propositions 
	\ref{pr:modulus:UB}, \ref{pr:modulus:x},
	and \ref{pr:modulus:t} also imply, implicitly, the fact that two quantities on the right-hand
	side of \eqref{mild}---viewed as random functions of $(t\,,x\,,\lambda)$---have 
	continuous modifications on $(0\,,\infty)\times\R\times(0\,,\infty)$. 
	It follows that \eqref{mild} holds for all $(t\,,x\,,\lambda)\in(0\,,\infty)\times\R\times(0\,,\infty)$
	off a single null set. This and a stochastic Fubini argument together imply the result.
\end{proof}

We are finally ready to prove Proposition \ref{pr:improved}. Before we
commence, however, it might be helpful to explicitly state the
following well-known chaining argument \cite{DKMNX}. It might help to recall
that an \emph{upright box} in $\R^N$ has the form
$\prod_{i=1}^N[a_i\,,b_i]$, where $a_i\le b_i$ are real numbers
[$1\le i\le N$].

\begin{proposition}\label{pr:KCT}
	Suppose $\{X(t)\}_{t\in T}$ is a real-valued stochastic process,
	where $T$ is a bounded upright box in $\R^N$ for some $N\ge 1$. Suppose
	also that there exists $Q\in(0\,,\infty)$ such that for every integer $K\ge 2$
	\[
		B_K := \E\left(|X(s)|^K\right)<\infty
		\quad\text{for some }s\in T\text{ and}\quad
		C_K := \sup_{\substack{s,t\in T\\s\neq t}}
		\E\left(\frac{|X(t)-X(s)|^K}{|t-s|^{KQ}}\right) <\infty,
	\]
	where $|\tau|$ denotes any one of the $\ell^p$-norms on $\tau\in\R^N$
	$[0<p<\infty]$. Then, there exists a finite constant $D$---%
	depending also on the diameter of $T$, $N$, $Q$ and $K$ with $QK>N$,
	---such that
	\[
		\E\Bigg(\sup_{\substack{s,t\in T\\s\neq t}}
		|X(t)-X(s)|^K\Bigg) \le D^K C_K\quad\text{and hence}\quad
		\E\left(\sup_{t\in T}|X(t)|^K\right) \le 2^K(B_K+D^K C_K).
	\]
\end{proposition}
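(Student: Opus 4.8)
The plan is to run the classical dyadic chaining argument (the Kolmogorov continuity theorem). First I would reduce to the case $T=[0\,,1]^N$ by an affine change of coordinates; this rescaling multiplies $C_K$ by at most a fixed power of the diameter of $T$, which is harmless since that factor is absorbed into the final constant $D$. For $n\ge0$ let $D_n$ be the set of points of $[0\,,1]^N$ whose coordinates are integer multiples of $2^{-n}$, call $s,t\in D_n$ \emph{neighbors} when every coordinate of $s-t$ lies in $\{-2^{-n},0,2^{-n}\}$, and set $D_\infty:=\bigcup_{n\ge0}D_n$, which is dense in $T$. The number of neighbor pairs in $D_n$ is at most $3^N(2^n+1)^N\le c(N)2^{nN}$, while $|s-t|\le\sqrt N\,2^{-n}$ for every such pair; hence, writing $M_n:=\max\{|X(t)-X(s)|:\ s,t\in D_n\text{ are neighbors}\}$, the hypothesis gives
\[
	\E\left(M_n^K\right)\le\sum_{\text{neighbor pairs in }D_n}\E\left(|X(t)-X(s)|^K\right)
	\le c(N)2^{nN}\cdot C_K\,N^{KQ/2}2^{-nKQ}=c(N)N^{KQ/2}C_K\,2^{-n(KQ-N)}.
\]

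Next I would prove the telescoping bound $\sup_{s,t\in D_\infty}|X(t)-X(s)|\le(N+2)\sum_{n\ge0}M_n$. The idea is that each $s\in D_n$ has a \emph{parent} $p(s)\in D_{n-1}$, obtained by rounding every coordinate down to the nearest multiple of $2^{-(n-1)}$, and $p(s)$ is a neighbor of $s$ inside $D_n$; iterating $s\mapsto p(s)$ brings any point of $D_n$ to a corner of the box at total cost $\sum_{n\ge1}M_n$, and any two corners are joined by at most $N$ neighbor steps inside $D_0$ (flip one coordinate at a time), at cost $NM_0$. Concatenating the chains of two arbitrary points of $D_\infty$ gives the claimed bound. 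Taking $L^K(\P)$-norms, Minkowski's inequality together with the estimate for $\E(M_n^K)$ yields
\[
	\left\|\sup_{s,t\in D_\infty}|X(t)-X(s)|\right\|_K\le(N+2)\sum_{n\ge0}\|M_n\|_K
	\le(N+2)\,c(N)^{1/K}N^{Q/2}C_K^{1/K}\sum_{n\ge0}2^{-n(Q-N/K)},
\]
and the last series converges precisely because $QK>N$; raising to the $K$-th power produces the first inequality of the proposition with an explicit $D=D(N,Q,K,\operatorname{diam}T)$.

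Then I would pass from the dense set $D_\infty$ to all of $T$. Since $\sum_n\|M_n\|_K<\infty$ forces $M_n\to0$ a.s., the restriction of $X$ to $D_\infty$ is a.s.\ uniformly continuous and therefore extends to a continuous random field $\widetilde X$ on $T$; the bound $\E(|X(t)-X(s)|^K)\le C_K|t-s|^{KQ}$ shows that $X$ is continuous in $L^K(\P)$, hence in probability, so $\widetilde X$ is a modification of $X$ and $\sup_{s,t\in T}|\widetilde X(t)-\widetilde X(s)|=\sup_{s,t\in D_\infty}|X(t)-X(s)|$, which proves the first inequality (interpreted for the continuous version). For the second inequality, fix $s_0\in T$ with $\E(|X(s_0)|^K)=B_K$; then $\sup_{t\in T}|\widetilde X(t)|\le|X(s_0)|+\sup_{s,t\in T}|\widetilde X(t)-\widetilde X(s)|$, and combining $(a+b)^K\le2^{K-1}(a^K+b^K)$ with the first inequality gives $\E(\sup_{t\in T}|\widetilde X(t)|^K)\le2^{K-1}(B_K+D^KC_K)\le2^K(B_K+D^KC_K)$.

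I expect the main obstacle to be bookkeeping rather than conceptual: getting the combinatorial count of neighbor pairs and the telescoping constant to come out so that the resulting series has ratio $2^{-(Q-N/K)}$ — summable exactly under the stated condition $QK>N$ — and tracking how $\operatorname{diam}T$ enters $D$ through the initial rescaling. The one genuinely substantive point is the passage from $D_\infty$ to $T$: one must observe that the same moment hypothesis that drives the chaining also gives $L^K$-continuity, so that the estimate for the continuous modification is what the statement is really asserting.
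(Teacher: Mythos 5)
Your chaining argument is correct, and it is the standard proof of the result being invoked: the paper itself offers no proof of Proposition \ref{pr:KCT}, merely labelling it a ``well-known chaining argument'' and citing \cite{DKMNX}, so there is no in-paper argument to diverge from. The one step you compress --- deducing a.s.\ uniform continuity of $X$ on $D_\infty$ from $\sum_n M_n<\infty$, which requires chaining two nearby points of $D_\infty$ down to common-level neighbours so that their increment is controlled by the tail $\sum_{m\ge n}M_m$ --- is routine and does not affect correctness.
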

	
\begin{proof}[Proof of Proposition \ref{pr:improved}]
	Combine Propositions
	\ref{pr:modulus:x}, \ref{pr:modulus:t}, and \ref{pr:KCT},
	all the time keeping track of the various [explicit] constants.
\end{proof}

Let us observe also  the following fixed-time result, which is
proved exactly as Proposition \ref{pr:improved} was, but without
the $t$-uniformity.

\begin{proposition}\label{pr:improved:x}
	There exists $\varepsilon_0=\varepsilon_0(\lip_\sigma)\in(0\,,1)$ such that 
	for every $\varepsilon\in(0\,,\varepsilon_0)$ there
	exist finite constants $C_1=C_1(\varepsilon\,,\lip_\sigma)>0$
	and $C_2=C_2(\lip_\sigma)>0$---not 
	depending on $u_0$---such that uniformly
	for all real numbers $\lambda\ge1$, $k\ge 2$ that satisfy 
	$k\lambda^2\geq\varepsilon(c\lip_\sigma)^{-1}$, and for every $t>0$,
	\[
		\E\left(\sup_{x\in\mathbb{T}}
		\left| u(t\,,x\,;\lambda)\right|^k\right)
		\le C_1^kk^{k/2}\left(1+\frac{1}{t^{k/2}}\right)
		\exp\left(\frac{C_2k^3\lambda^4t}{\varepsilon^2}\right)
		\|u_0\|_{L^\infty}^{k\varepsilon}\|u_0\|_{L^1}^{k(1-\varepsilon)}.
	\]
\end{proposition}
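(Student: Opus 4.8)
\emph{The plan} is to repeat the proof of Proposition~\ref{pr:improved} with one simplification: since no supremum over a time interval is taken, only a chaining argument in the spatial variable is required, so Proposition~\ref{pr:modulus:t} plays no role and the $(1+|t-t_0|)^{(\varepsilon k+2)/2}$ factor never arises; conversely, allowing $t$ to be arbitrarily small is exactly what forces the negative power of $t$, which I will import directly from the pointwise and increment estimates. Fix $\varepsilon\in(0\,,\varepsilon_0)$ with $\varepsilon_0=\varepsilon_0(\lip_\sigma)$ small enough that every auxiliary bound below applies (in particular $3\lambda^2\ge\varepsilon(c\lip_\sigma^2)^{-1}$ holds for all $\lambda\ge1$). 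First I would split the mild formula \eqref{mild}, handling $P_tu_0$ and $\cI_t(\cdot\,;\lambda)$ separately: Lemma~\ref{lem:semigp} gives $\|P_tu_0\|_{L^\infty}\le 2(t^{-1/2}\vee 1)^{1-\varepsilon}\|u_0\|_{L^\infty}^\varepsilon\|u_0\|_{L^1}^{1-\varepsilon}$, which is deterministic and, raised to the $k$th power, is already dominated by the asserted right-hand side (with exponential rate $1$ and no $k^{k/2}$); it therefore contributes harmlessly after the elementary inequality $(a+b)^k\le2^k(a^k+b^k)$.

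The stochastic term would be controlled by Proposition~\ref{pr:KCT} with $N=1$ and $T=\mathbb{T}$. For the pointwise input $B_k$ I would use $\cI_t(x\,;\lambda)=u(t\,,x\,;\lambda)-(P_tu_0)(x)$ together with Proposition~\ref{pr:moment:UB:improved} and Lemma~\ref{lem:semigp}, which yields (when $k\lambda^2\ge\varepsilon(c\lip_\sigma^2)^{-1}$) a bound of the form $4^k t^{-k(1-\varepsilon)/2}\exp(\tfrac{c^2}{\varepsilon^2}k^3\lambda^4\lip_\sigma^4t)\|u_0\|_{L^\infty}^{k\varepsilon}\|u_0\|_{L^1}^{k(1-\varepsilon)}$ up to a constant. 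For the increment input $C_k$ I would apply Lemma~\ref{lem:modulus:x} with a \emph{fixed} $\delta\in(\tfrac23\,,1)$, which supplies exactly the required bound with the power $|x-y|^{\delta k/2}$, the same exponential factor, and a $\max\{1\,,t^{-k/2}\}$ prefactor. For every $k$ with $\delta k/2>1$ --- i.e.\ $k\ge k_\delta$ for a threshold $k_\delta>2$ that does \emph{not} depend on $\varepsilon$ --- Proposition~\ref{pr:KCT} gives $\E(\sup_{x\in\mathbb{T}}|\cI_t(x\,;\lambda)|^k)\le2^k(B_k+D^kC_k)$ with $D$ bounded (the chaining constant only degenerates as the exponent product approaches $N$, which the threshold prevents). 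Collecting the constants and using $t^{-k(1-\varepsilon)/2}\vee t^{-k/2}\le1+t^{-k/2}$ then proves the proposition for $k\ge k_\delta$. For the remaining range $2\le k<k_\delta$ I would invoke monotonicity of $L^p(\P)$-norms: with $Z:=\sup_{x\in\mathbb{T}}|u(t\,,x\,;\lambda)|$ one has $\E(Z^k)\le(\E(Z^{\lceil k_\delta\rceil}))^{k/\lceil k_\delta\rceil}$, and raising the already-established bound to this sub-unit power reproduces the claimed estimate.

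The hard part will be the bookkeeping of the explicit constants so that $C_1$ depends only on $(\varepsilon\,,\lip_\sigma)$ while $C_2$ depends only on $\lip_\sigma$. The delicate issue is the $\varepsilon$-independence of $C_2$: it forces the chaining to be run at exponents bounded away from $2$ \emph{uniformly in} $\varepsilon$, so that the final descent is from an $O(1)$ exponent rather than from something of order $2/\varepsilon$ --- the latter would multiply the exponential rate by a factor that blows up like $1/\varepsilon^2$. This is exactly why $P_tu_0$ must be peeled off via Lemma~\ref{lem:semigp}, and why one uses Lemma~\ref{lem:modulus:x}, whose H\"older exponent $\delta k/2$ is governed by a free parameter $\delta$, rather than the bundled estimate of Proposition~\ref{pr:modulus:x}, whose effective exponent is $\min(\delta\,,\varepsilon)k/2=\varepsilon k/2$ when $\varepsilon$ is small. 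One must also keep the negative power of $t$ straight: it enters through $B_k$ as $t^{-k(1-\varepsilon)/2}$ and through $C_k$ as $t^{-k/2}$, and it is the latter that survives in the final $(1+t^{-k/2})$.
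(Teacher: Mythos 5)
Your proposal is correct, and it follows the same basic recipe as the paper: a pointwise moment bound (Proposition~\ref{pr:moment:UB:improved}), a spatial increment bound, and the chaining estimate of Proposition~\ref{pr:KCT}. The paper's own proof is a one-line instruction to combine Propositions~\ref{pr:moment:UB:improved}, \ref{pr:modulus:x} and \ref{pr:KCT} ``keeping track of the constants,'' and the one substantive point where you deviate is precisely the point where that instruction needs care. You correctly observe that the increment estimate of Proposition~\ref{pr:modulus:x} has effective H\"older exponent $\min(\delta,\varepsilon)k/2=\varepsilon k/2$ for small $\varepsilon$, so feeding it directly into Proposition~\ref{pr:KCT} requires $\varepsilon k>2$; for $k$ near $2$ and $\varepsilon$ small one would have to chain at an exponent of order $2/\varepsilon$ and descend by Jensen, which inflates the exponential rate from $\varepsilon^{-2}$ to $\varepsilon^{-4}$ and would make $C_2$ depend on $\varepsilon$, contrary to the statement. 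Your fix --- peel off $P_tu_0$ deterministically via Lemma~\ref{lem:semigp} and run the chaining on $\cI_t(\cdot\,;\lambda)$ alone using Lemma~\ref{lem:modulus:x} with a fixed $\delta$ bounded away from $0$, then descend from a bounded exponent for the remaining $k$ close to $2$ --- is exactly what is needed to obtain the stated dependence $C_1=C_1(\varepsilon,\lip_\sigma)$, $C_2=C_2(\lip_\sigma)$, and the descent computation you sketch (the $\|u_0\|$ factors scale correctly, the bounded $k'$ keeps the rate at $O(k/\varepsilon^2)\le O(k^3/\varepsilon^2)$, and $(1+t^{-k'/2})^{k/k'}\le 1+t^{-k/2}$) goes through. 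So your argument is not a different method, but it is a sharper and fully justified implementation of the paper's sketch.
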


\begin{proof}[Proof of Proposition \ref{pr:improved:x}]
	Combine Propositions \ref{pr:moment:UB:improved}, 
	\ref{pr:modulus:x} and \ref{pr:KCT}, all the time
	keeping track of the various [explicit] constants.
\end{proof}

\section{Proof of Theorem \ref{th:lambda:UB}}\label{sec:Pf:th:lambda:UB}
Define $\cF:=\{\cF_t^0\}_{t>0}$ denote the filtration of sigma-algebras
that is defined via
\[
	\cF_t^0 := \sigma\left\{ \int_{(0,t)\times\mathbb{T}}
	\phi(s\,,y)\, W(\d s\,\d y):\
	\phi\in L^2((0\,,t]\times\mathbb{T})\right\},
\]
for every $t\ge0$.
Let $\P^{u_0}$ denote the law of the process $\{u(t\,,x\,;\lambda)\}_{t\ge0,x\in\mathbb{T}}$,
conditional on the initial state being $u_0$. Then we can define
\begin{equation}\label{cF}
	\cF_t := \bigcap_{s>t} \overline{\cF_s^0}\qquad[t\ge0],
\end{equation}
where $\overline{\cF_s^0}$ denotes the completion of $\cF_s^0$ with respect to
the family $\{\P^{u_0}\}_{u_0\in L^\infty}$ of probability measures.
Intuitively speaking, the filtration $\cF:=\{\cF_t\}_{t\ge0}$ is the Brownian filtration
that corresponds to the infinite-dimensional Brownian motion
$t\mapsto W(t\,,\cdot)$.

It is well known, see \cite[Theorem 9.15, page 256]{DZ},
that the process $t\mapsto u(t\,,\cdot\,;\lambda)$ is a Markov process,
with values in $C(\mathbb{T})$, with respect to the filtration $\cF:=\{\cF_t\}_{t\ge0}$
and initial measures $\{\P^{u_0}\}_{u_0\in L^\infty}$. That is,
\[
	\E\left[\Phi(u(t+\tau\,,\cdot\,;\lambda))\mid \cF_t\right] = \E^{u_t}\left[ \Phi(u(\tau\,,\cdot\,;\lambda))\right]
	\qquad\text{a.s.,}
\]
for every bounded functional $\Phi:C(\mathbb{T})\to\R_+$, and all $t,\tau\ge0$,
where we have suppressed the notational dependence on $\lambda$ 
to keep the notation simple.
We can restate this fact as follows: Choose and fix $t\ge0$ and define
$v(\tau\,,x):= u(t+\tau\,,x\,;\lambda)$ for all $\tau\ge0$ and $x\in\mathbb{T}$. 
Then, conditioned on $\cF_t$, the random field
$\{v(\tau\,,x)\}_{\tau\ge0,x\in\mathbb{T}}$ solves the SPDE \eqref{SHE-lambda}
[in law], started at $v(0\,,x) := u(t\,,x\,;\lambda)$, where now the noise $W$ is replaced
by a Brownian sheet $W^{(t)}$ that is independent of $\cF_t$. 
In particular, we may appeal to Proposition \ref{pr:improved:x},
conditionally, as follows: 
There exists $\varepsilon_0=\varepsilon_0(\lip_\sigma)\in(0\,,\nicefrac12)$
such that for every $\varepsilon\in(0\,,\varepsilon_0)$
there exist finite constants $C_1=C_1(\varepsilon\,,\lip_\sigma)>0$
and $C_2=C_2(\lip_\sigma)>0$---not 
depending on $u_0$---such that uniformly
for all real numbers $\lambda\ge2$ and $t>0$ and $h\in (0,1)$,
\begin{equation}\label{eq:est:prior}
	\E\left(\left.\sup_{x\in\mathbb{T}}
	\left| u(t+h\,,x\,;\lambda)\right|^2\ \right|\, \mathscr{F}_t\right)
	\le \frac{C_1\e^{C_2\lambda^4 h/\varepsilon^2 }}{h}
	\|u(t\,,\cdot\,;\lambda)\|_{L^\infty}^{2\varepsilon}
	\|u(t\,,\cdot\,;\lambda)\|_{L^1}^{2(1-\varepsilon)},
\end{equation}
almost surely. 

Now consider the event,
\begin{equation}\label{A}
	\mathbf{A}(t\,;\lambda) :=\left\{\omega\in\Omega:\ \|u(t\,,\cdot\,;\lambda)
	(\omega)\|_{L^1}
	\le \|u_0\|_{L^1}\exp\left(-\frac{\lambda^2{\rm L}_\sigma^2 t}{8}\right) \right\}.
\end{equation}
According to Proposition \ref{pr:M:small} [with $\varepsilon:=\nicefrac12$],
\begin{equation}\label{P(A)}
	\P(\mathbf{A}(t\,;\lambda)) \ge 
	1- \e^{ -\lambda^2{\rm L}_\sigma^2t/64}.
\end{equation}
Also, \eqref{eq:est:prior} implies that
\begin{equation}\label{pre:Jensen}\begin{split}
	&\E\left(\sup_{x\in\mathbb{T}}
		\left| u(t+h\,,x\,;\lambda)\right|^2 ;\ \mathbf{A}(t\,;\lambda)
		\right)\\
	&\hskip1.2in\le \frac{C_1\e^{C_2
		\lambda^4 h/\varepsilon^2}\|u_0\|_{L^1}^{2(1-\varepsilon)}
		}{h}
		\exp\left(-\frac{(1-\varepsilon)
		\lambda^2{\rm L}_\sigma^2 t}{4}\right) 
		\E\left(\|u(t\,,\cdot\,;\lambda)\|_{L^\infty}^{2\varepsilon}\right).
\end{split}\end{equation}
Since $\varepsilon\in(0\,,\varepsilon_0)\subset(0\,,1)$, Jensen's inequality
shows that
\begin{equation}\label{eq:by:Jensen}
	\E\left(\|u(t\,,\cdot\,;\lambda)\|_{L^\infty}^{2\varepsilon}\right)
	\le \left| \E\left(\|u(t\,,\cdot\,;\lambda)\|_{L^\infty}^2\right)\right|^\varepsilon.
\end{equation}
Proposition \ref{pr:improved:x} implies the following [set $k:=2$
and $\varepsilon:=\nicefrac12$ in the statement of the proposition]:
There exists a positive and finite constant $C_3$ such that for all 
$t>0$ and $\lambda^2\ge(4c\lip_{\sigma})^{-1}$,
\[
	\E\left(\|u(t\,,\cdot\,;\lambda)\|_{L^\infty}^2\right) \le
	\frac{C_3\e^{C_3\lambda^4t}}{t}
	\|u_0\|_{L^\infty}\|u_0\|_{L^1}.
\]
We plug this estimate into \eqref{eq:by:Jensen}, and then appeal to \eqref{pre:Jensen},
and the fact that $\varepsilon<\varepsilon_0<\nicefrac12$,
in order to see that
\begin{equation}\label{oh:boy}
	\E\left(\sup_{x\in\mathbb{T}}
	\left| u(t+h\,,x\,;\lambda)\right|^2 ;\ \mathbf{A}(t\,;\lambda)\right)
	\le \frac{C_4\e^{C_5\lambda^4 [(h/\varepsilon^2 )+\varepsilon t]}
	\|u_0\|_{L^1}^{2-\varepsilon}\|u_0\|_{L^\infty}^\varepsilon
	}{h t^{\varepsilon}}
	\e^{-\lambda^2{\rm L}_\sigma^2 t/8},
\end{equation}
where $C_4:=C_1C_3^{\varepsilon}$ and $C_5:=\max(C_2\,,C_3)$. Note that the implied constants
do not depend on $(t\,,h\,,\lambda)$. 

We now specialize the preceding to the following choice of $\varepsilon$ and $h$:
\[
	\varepsilon := \frac{{\rm L}_\sigma^2}{32C_5\lambda^2}
	\quad\text{and}\quad
	h:=\frac{{\rm L}_\sigma^6 t}{(32C_5\lambda^2)^3}.
\]
This choice is permissible, provided that $\varepsilon<\varepsilon_0<\nicefrac12$;
since if $\lambda$ is large enough so that $\varepsilon<\varepsilon_0$ and $h<1$. 
Because $\varepsilon_0$ does not depend on $t$, it follows that for every $t>0$,
\[
	\limsup_{\lambda\uparrow\infty}\frac{1}{\lambda^2}\log
	\E\left(\sup_{x\in\mathbb{T}}
	\left| u\left(t+C_6\lambda^{-6}\,,x\,;\lambda\right)\right|^2 ;\ \mathbf{A}(t\,;\lambda)\right)
	\le -\frac{{\rm L}_\sigma^2 t}{16},
\]
where $C_6= {\rm L}_{\sigma}^6 (32C_5)^{-3}t$. By the Chebyshev inequality,
\[
	\limsup_{\lambda\uparrow\infty}\frac{1}{\lambda^2}
	\log \P\left\{ \sup_{x\in\mathbb{T}} u\left(t+C_6\lambda^{-6},x\,;\lambda\right) \ge 
	\exp\left(-\frac{{\rm L}_\sigma^2\lambda^2 t}{64}
	\right) ;\ \mathbf{A}(t\,;\lambda)\right\}\le-\frac{{\rm L}_\sigma^2t}{32}.
\]
It is easy to see, after a change of variables in the preceding quantitative
bounds [before we apply the limsup], that the preceding
holds also with $u(t\,,x\,;\lambda)$ in place of $u(t+C_6\lambda^{-6}\,,x\,;\lambda)$.
For otherwise, we simply replace $t$ by $t-C_6\lambda^{-6}$
in all of the formulas before we let $\lambda\uparrow\infty$. 
In this way, we can  combine the above estimate 
with \eqref{P(A)} in order to deduce the theorem.\qed


\section{Proof of Theorem \ref{th:decay:t}}

\begin{proof}[Proof of Theorem \ref{th:decay:t}: Upper bound]
	Throughout, we choose and hold $\lambda>0$ fixed. 
	
	The proof of \eqref{eq:est:prior} shows also the following variation, thanks to Proposition \ref{pr:improved}:
	There exists $\varepsilon_0=\varepsilon_0(\lip_\sigma)\in(0\,,\nicefrac12)$, 
	small enough, such that 
	for every $\varepsilon\in(0\,,\varepsilon_0)$ there
	exist finite constants $C_1=C_1(\varepsilon\,,\lip_\sigma)>0$
	and $C_2=C_2(\lip_\sigma)>0$---not 
	depending on $u_0$---such that uniformly
	for all real numbers $\lambda\ge2$ and $\eta\in(0\,,1)$ and $t\ge t_0:=1$,  
	\begin{align*}
		&\E\left(\left.\sup_{x\in\mathbb{T}}\sup_{h\in[1,\eta (t+1)+1]}
			\left| u(t+h\,,x\,;\lambda)\right|^2\, \right|\ \mathscr{F}_t\right)\\
		&\hskip1in\le C_1\left[1+\eta(t+1)\right]^{3/2} 
			\e^{C_2\lambda^4 (\eta (t+1)+1)/\varepsilon^2}
			\|u(t\,,\cdot\,;\lambda)\|_{L^\infty}^{2\varepsilon}
			\|u(t\,,\cdot\,;\lambda)\|_{L^1}^{2(1-\varepsilon)},
	\end{align*}
	almost surely.  We appeal to this bound with $\eta:=\varepsilon^3$ in order
	to see that for all real numbers $\lambda\ge2$, $\eta\in(0\,,1)$, and $t\ge t_0:=1$,
	\begin{align*}
		&\E\left(\left.\sup_{x\in\mathbb{T}}\sup_{h\in[1,\varepsilon^3 (t+1)+1]}
			\left| u(t+h\,,x\,;\lambda)\right|^2\, \right|\ \mathscr{F}_t\right)\\
		&\hskip1in\le C_1\left[
			1+\varepsilon^3(t+1)\right]^{3/2}\e^{C_2\lambda^4/\varepsilon^2}
			\e^{C_2\lambda^4 \varepsilon (t+1)}
			\|u(t\,,\cdot\,;\lambda)\|_{L^\infty}^{2\varepsilon}
			\|u(t\,,\cdot\,;\lambda)\|_{L^1}^{2(1-\varepsilon)},
	\end{align*}
	almost surely.  
	It follows from this inequality
	that, for the same set ${\bf A}(t\,;\lambda)$ as was defined
	in \eqref{A}, the following variation of \eqref{oh:boy} holds:
	\begin{align*}
		&\E\left(\sup_{x\in\mathbb{T}}\sup_{h\in[1,\varepsilon^3 (t+1)+1]}
			\left| u(t+h\,,x\,;\lambda)\right|^2 ;\ \mathbf{A}(t\,;\lambda)\right)\\
			&\le C_7\left[1+\varepsilon^3(t+1)\right]^{3/2} 
			\e^{2C_5\lambda^4 \varepsilon  t}
			\|u_0\|_{L^1}^{2-\varepsilon}\|u_0\|_{L^\infty}^\varepsilon
			\e^{-\lambda^2{\rm L}_\sigma^2 t/8}\\
		& = C_7\left[1+\varepsilon^3(t+1)\right]^{3/2} \exp\left( -\lambda^2 t\left[
			\frac{{\rm L}_\sigma^2}{8} - 2 C_5\lambda^2\varepsilon\right]\right)
			\|u_0\|_{L^1}^{2-\varepsilon}\|u_0\|_{L^\infty}^\varepsilon,
	\end{align*}
	provided, additionally, that $0<\varepsilon<\varepsilon_0$; here, 
	$C_7=C_7(\varepsilon\,,\lip_\sigma,\lambda)$ is a positive and finite constant,
	and $C_5$ is the same constant that appeared in \S\ref{sec:Pf:th:lambda:UB}.
	We use the preceding inequality with the following special choice:
	\[
		\varepsilon := \min\left( \frac{\varepsilon_0}{2}\,,\frac{{\rm L}_\sigma^2}{32
		C_5 \lambda^2}\right).
	\]
	For this particular choice of $\varepsilon$, we have
	\[
		\E\left(\sup_{x\in\mathbb{T}}\sup_{s\in[t+1,(1+\varepsilon^3) (t+1)]}
		\left| u(s\,,x\,;\lambda)\right|^2 ;\ \mathbf{A}(t\,;\lambda)\right)
		\le  C_8\left[1+\varepsilon^3(t+1)\right]^{3/2} 
		\e^{-{\rm L}_\sigma^2 \lambda^2 (t+1)/16},
	\]
	uniformly for all $t\ge 1$, where $C_8:=C_7\exp(\lambda^2 {\rm L}_\sigma^2 /16)
	\|u_0\|_{L^1}^{2-\varepsilon}\|u_0\|_{L^\infty}^\varepsilon$ is a finite
	constant that does not depend on $t$. Define
	\[
		\mu := \log(1+\varepsilon^3),
	\]
	to see that for large integers $N$, and replace $t$ by $\exp(N\mu)-1$
	\[
		\E\left(\sup_{x\in\mathbb{T}}\sup_{s\in[\exp(N\mu),\exp([N+1]\mu)]}
		\left| u(s\,,x\,;\lambda)\right|^2 ;\ \mathbf{A}(\e^{N\mu}-1\,;\lambda)\right)
		\le  C_8\left[1+\varepsilon^3\e^{N\mu}\right]^{3/2}
		\e^{-{\rm L}_\sigma^2 \lambda^2 \e^{N\mu}/16}.
	\]
	In particular, Chebyshev's inequality shows that for all $\rho>0$,
	\begin{align*}
		&\P\left\{ \sup_{x\in\mathbb{T}}\sup_{s\in[\exp(N\mu),\exp([N+1]\mu)]}
			\left| \frac{u(s\,,x\,;\lambda)}{\e^{- {\rm L}_\sigma^2 \lambda^2s/64}}
			\right| \ge \rho\,; 
			{\bf A}(\e^{N\mu}-1\,;\lambda)\right\}\\
		&\le \P\left\{ \sup_{x\in\mathbb{T}}\sup_{s\in[\exp(N\mu),\exp([N+1]\mu)]}
			\left|u(s\,,x\,;\lambda) \right| \ge
			\rho\e^{- {\rm L}_\sigma^2 \lambda^2 \e^{(N+1)\mu}/64}\,; 
			{\bf A}\left(\e^{N\mu}-1\,;\lambda\right)\right\}\\
		&\le \frac{C_8}{\rho^2}\left[1+\varepsilon^3\e^{N\mu}\right]^{3/2} 
			\exp\left(-\frac{ {\rm L}_\sigma^2 \lambda^2 \e^{N\mu}}{64}\right).
	\end{align*}
	Combine this estimate with \eqref{P(A)} to see that
	\begin{align*}
		&\sum_{N=1}^\infty
			\P\left\{ \sup_{x\in\mathbb{T}}\sup_{s\in[\exp(N\mu),\exp([N+1]\mu)]}
			\left| \frac{u(s\,,x\,;\lambda)}{\e^{- {\rm L}_\sigma^2 \lambda^2s/64}}
			\right| \ge \rho\right\}\\
		&\hskip.7in\le \frac{C_8}{\rho^2}\sum_{N=1}^\infty
			\left[1+\varepsilon^3\e^{N\mu}\right]^{3/2}
			\exp\left(-\frac{ {\rm L}_\sigma^2 \lambda^2 \e^{N\mu}}{64}\right)
			+\sum_{N=1}^\infty
			\exp\left( -\frac{\lambda^2{\rm L}_\sigma^2\left[
			\e^{N\mu}-1\right]}{64}\right)<\infty.
	\end{align*}
	We can conclude from this and the Borel--Cantelli that
	\begin{equation}\label{Lower:1/2}
		\lim_{t\to\infty}
		\sup_{x\in\mathbb{T}}
		\left|\frac{u(t\,,x\,;\lambda)}{\e^{-\lambda^2 {\rm L}_\sigma^2 t/64}}\right|=0,
	\end{equation}
	almost surely. This completes the first half of the proof of Theorem \ref{th:decay:t}.
\end{proof}

The proof of the lower bound of Theorem \ref{th:decay:t}
depends on the following large-deviations bound for sums of dependent Bernoulli
random variables. For a proof see Lemma 3.9 of Khoshnevisan, R\'ev\'esz, and Shi \cite{KRS}.

\begin{lemma}\label{lem:LD}
	Suppose $J_1,J_2,\ldots$ are $\{0\,,1\}$-valued random variables
	that satisfy the following for some non-random constant $q>0$:
	$\E ( J_{k+1}\mid J_1,\ldots,J_k ) \ge q$ for all $k\ge 1$, a.s.
	Then,
	\[
		\P\left\{  J_1+\cdots+J_n \le nq(1-\varepsilon) \right\} \le
		\exp\left(-\frac{nq \varepsilon^2}{2}\right)
		\qquad\text{for every $\varepsilon\in(0\,,1)$ and
		$n\ge1$.}
	\]
\end{lemma}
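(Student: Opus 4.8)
The plan is to prove Lemma \ref{lem:LD} by a standard Chernoff (exponential Markov) argument that uses the hypothesis only through the bound it gives on the conditional moment generating function. Write $S_n:=J_1+\cdots+J_n$ and let $\cF_k:=\sigma(J_1\,,\ldots,J_k)$ for $k\ge1$, with $\cF_0$ the trivial $\sigma$-algebra; we read the hypothesis as $\E(J_{k+1}\mid\cF_k)\ge q$ for all $k\ge0$, so that in particular $\E(J_1)\ge q$ (and necessarily $q\le1$). Fix $\theta>0$. Since $x\mapsto\e^{-\theta x}$ is decreasing, Markov's inequality gives
\[
	\P\left\{ S_n\le nq(1-\varepsilon)\right\}
	=\P\left\{\e^{-\theta S_n}\ge \e^{-\theta nq(1-\varepsilon)}\right\}
	\le \e^{\theta nq(1-\varepsilon)}\,\E\left(\e^{-\theta S_n}\right).
\]

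Next I would bound $\E(\e^{-\theta S_n})$ by peeling off one factor at a time. Because each $J_{k+1}$ takes values in $\{0\,,1\}$,
\[
	\E\left(\e^{-\theta J_{k+1}}\mid\cF_k\right)
	=1-\left(1-\e^{-\theta}\right)\E\left(J_{k+1}\mid\cF_k\right)
	\le 1-\left(1-\e^{-\theta}\right)q
	\le \exp\left(-\left(1-\e^{-\theta}\right)q\right),
\]
where the last step uses $1-x\le\e^{-x}$. Since $\e^{-\theta S_k}$ is $\cF_k$-measurable and nonnegative, conditioning successively on $\cF_{n-1}\,,\cF_{n-2}\,,\ldots$ and then on $\cF_0$ yields $\E(\e^{-\theta S_n})\le\exp(-nq(1-\e^{-\theta}))$, and therefore
\[
	\P\left\{ S_n\le nq(1-\varepsilon)\right\}
	\le \exp\left(-nq\left[\left(1-\e^{-\theta}\right)-\theta(1-\varepsilon)\right]\right).
\]

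It then remains to optimize over $\theta>0$. The function $\theta\mapsto(1-\e^{-\theta})-\theta(1-\varepsilon)$ is maximized at $\theta=\log(1/(1-\varepsilon))>0$, where it equals $\varepsilon+(1-\varepsilon)\log(1-\varepsilon)$; this gives
\[
	\P\left\{ S_n\le nq(1-\varepsilon)\right\}
	\le \exp\left(-nq\left(\varepsilon+(1-\varepsilon)\log(1-\varepsilon)\right)\right).
\]
To recover the form stated in the lemma it suffices to verify the elementary inequality $\varepsilon+(1-\varepsilon)\log(1-\varepsilon)\ge\varepsilon^2/2$ on $(0\,,1)$. Letting $g(\varepsilon)$ denote the difference of the two sides, one computes $g(0)=0$, $g'(\varepsilon)=-\log(1-\varepsilon)-\varepsilon$ so that $g'(0)=0$, and $g''(\varepsilon)=\varepsilon/(1-\varepsilon)\ge0$ on $[0\,,1)$; hence $g$ is nondecreasing with $g(0)=0$, so $g\ge0$, and the lemma follows.

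There is no serious obstacle here: this is a textbook large-deviation estimate for sums of dependent Bernoulli variables whose conditional means are uniformly bounded below. The only points requiring a little care are the harmless convention used to start the induction (so that the factor $\E(\e^{-\theta J_1})$ is also controlled) and the verification of the final scalar inequality $\varepsilon+(1-\varepsilon)\log(1-\varepsilon)\ge\varepsilon^2/2$, both of which are routine.
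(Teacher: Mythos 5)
Your argument is correct and complete. Note that the paper itself does not prove this lemma; it simply cites Lemma~3.9 of Khoshnevisan--R\'ev\'esz--Shi \cite{KRS}, so there is no internal proof to compare against, but what you have written is the standard exponential-Chernoff derivation and every step checks out: the Markov bound for $\e^{-\theta S_n}$, the identity $\E(\e^{-\theta J_{k+1}}\mid\cF_k)=1-(1-\e^{-\theta})\E(J_{k+1}\mid\cF_k)$ (valid precisely because the $J$'s are $\{0,1\}$-valued), the telescoping via the tower property, the optimization at $\theta=\log(1/(1-\varepsilon))$ yielding the relative-entropy rate $\varepsilon+(1-\varepsilon)\log(1-\varepsilon)$, and the elementary comparison of that rate with $\varepsilon^2/2$ via two differentiations. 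One point you were right to flag explicitly: as literally stated, the hypothesis only controls $\E(J_{k+1}\mid J_1,\ldots,J_k)$ for $k\ge1$ and says nothing about $\E(J_1)$, and without the convention $\E(J_1)\ge q$ the conclusion with the factor $nq$ is actually false (take $J_1\equiv0$ and $J_k\equiv1$ for $k\ge2$ with $q=1$ and $\varepsilon\le 1/n$). Your reading, with $\cF_0$ trivial so that $\E(J_1)\ge q$, is the intended one and is satisfied in the paper's application, where \eqref{PE} holds for all $n\ge1$ and hence bounds $\E(J_1)$ as well.
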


We now proceed with the derivation of the lower bound of Theorem \ref{th:decay:t}.

\begin{proof}[Proof of Theorem \ref{th:decay:t}: Lower bound]
	We appeal to a one-sided adaptation of a method of Mueller \cite{Mueller1}.
	Define $T_0:=0$ and then iteratively let
	\[
		T_{n+1} := \inf\left\{ t>T_n:\  \inf_{x\in\mathbb{T}} u(t\,,x\,;\lambda) < 
		\e^{-1}\inf_{x\in\mathbb{T}}
		u(T_n\,,x\,;\lambda)\right\}\qquad\text{for all $n\ge 0$},
	\]
	where $\inf\varnothing:=\infty$. We have already proved in \eqref{Lower:1/2} that 
	$\sup_{x\in\mathbb{T}}u(t\,,x\,;\lambda)\to 0$ a.s.\ as $t\to\infty$. Therefore,
	$T_n<\infty$ for all $n\ge 0$ almost surely. Moreover, the sample-function continuity 
	of $u$ shows that the $T_n$'s are stopping times with respect to the filtration
	$\cF:=\{\cF_t\}_{t\ge0}$, defined earlier in \eqref{cF}. We may apply the 
	strong Markov property of $u$, with respect to $\cF$, at the stopping time $T_n$
	in order to see that for all integers $n\ge0$,
	\[
		\P\left( \left. T_{n+1}- T_n \le \tau \ \right|\, \cF_{T_n}\right)
		\le \P\left( \left.\inf_{x\in\mathbb{T}}\inf_{t\in[0,\tau]} v^{(n)}(t\,,x) \le \e^{-1}
		\inf_{x\in\mathbb{T}}v^{(n)}_0(x) \ \right|\, v^{(n)}_0\right),
	\]
	a.s., where $v^{(n)}:=\{v^{(n)}(t\,,x)\}_{t\ge0,x\in[-1,1]}$ solves \eqref{SHE-lambda} starting from
	the random initial profile $v^{(n)}_0(x) := v^{(n)}(0\,,x)=u(T_n\,,x\,;\lambda)$. By the very definition
	of the stopping time $T_n$, and since $T_n$ is finite a.s., 
	\[
		v^{(n)}_0(x) = u(T_n\,,x\,;\lambda)\ge \e^{-1} \inf_{y\in\mathbb{T}}u(T_{n-1}\,,y\,;\lambda)
		\ge \cdots \ge \e^{-n} \inf_{y\in\mathbb{T}}u_0(y) =: \e^{-n}\underline{u}_0,
	\]
	a.s.\ for every $x\in\mathbb{T}$, and with identity for some $x\in\mathbb{T}$ a.s.  
	Because $\underline{u}_0>0$ [see \eqref{u_0>0}],
	it follows from a comparison theorem \cite{ChenKim,Mueller1,Shiga} that
	$v^{(n)}(t\,,x) \ge w^{(n)}(t\,,x)$ for all $t\ge0$ and $x\in\mathbb{T}$ a.s., where
	$w^{(n)}$ solves \eqref{SHE-lambda} [for a different Brownian sheet]
	starting from $w^{(n)}_0(x):= w^{(n)}(0\,,x)= \e^{-n}\underline{u}_0$.
	In particular, for all integers $n\ge0$ and reals $\tau\in(0\,,1)$,
	\begin{align*}
		\P\left( T_{n+1}- T_n \le\tau \mid \cF_{T_n}\right)
			&\le \P\left\{\inf_{x\in\mathbb{T}}\inf_{t\in[0,\tau]} w^{(n)}(t\,,x) \le 
			\e^{-1}w^{(n)}_0(x) \equiv \e^{-1-n} 
			\underline{u}_0\right\}\\
		&\le \P\left\{ \sup_{x\in\mathbb{T}}\sup_{t\in[0,\tau]}\left|
			w^{(n)}(t\,,x) - w^{(n)}_0(x)\right| \ge\e^{-1-n}\underline{u}_0
			\right\}\\
		&\le \e^2\E\left( \sup_{x\in\mathbb{T}}\sup_{t\in[0,\tau]}\left|
			\frac{w^{(n)}(t\,,x)}{w^{(n)}_0(0)}-1\right|^2\right).
	\end{align*}
	Now, $ z^{(n)}(t\,,x) := w^{(n)}(t\,,x)/w^{(n)}_0(0)$ solves
	\eqref{SHE-lambda}, started at $ z^{(n)}_0(x):=z^{(n)}(0\,,x)= 1$, with $\sigma$ replaced by
	\[
		\sigma^{(n)}(a) := \frac{\sigma\left( w_0(0) a\right)}{w_0(0)}
		= \frac{\e^n}{\underline{u}_0}
		\sigma\left(\frac{\underline{u}_0 a}{\e^n}\right)\qquad[a\in\R].
	\]
	We may observe that the Lipschitz constant of $\sigma^{(n)}$ is
	$\lip_\sigma$, uniformly for all $n\ge 0$. In this way we find that
	there exists a finite constant $C$ uniformly in $n$ and $K$ such that
	\[
		\sup_{\substack{(t,x),(s,y)\in (0,\tau)\times\mathbb{T}\\s\neq t, x\neq y}}
		\E\left( \frac{ \left| z^{(n)}(t\,,x) - z^{(n)}(s\,,y) \right |^ K}{ \left | 
		(t\,,x)-(s\,,y) \right |^{K/4}}\right)
		\le (CK)^{K/2} \lip_\sigma^K \lambda^K \e^{C K \lambda^4 
		\lip_{\sigma}^4 \tau},
	\] (see, e.g., \cite[Corollary 3.4]{Walsh} or \cite[Theorem 6.8]{DKMNX}).
	Thanks to this and a quantitative form of the Kolmogorov continuity theorem
	(see, e.g., Proposition \ref{pr:KCT}), there exists a real number $c_0$ such that, uniformly
	for all $\tau\in[0\,,1]$,
	\begin{align*}
		\sup_{n\ge 1}\P\left( \left. T_{n+1}- T_n \le \tau\ \right|\, \cF_{T_n}\right)
			&\le \e^2\sup_{n\ge 1}\E\left( \sup_{x\in\mathbb{T}}\sup_{t\in[0,\tau]}
			\left|z^{(n)}(t\,,x)- z^{(n)}_0(x)\right|^2\right)\\
		&\le c_0 \lambda^2 \lip_{\sigma}^2
        		\sqrt{\tau}\e^{c_0\lambda^4 \lip_{\sigma}^4 \tau}, 
	\end{align*}
	a.s.. Because $c_0$ does not depend on $\tau\in[0\,,1]$, we may choose a 
	special $\tau=\tau(\lambda\,,\lip_\sigma,c_0)$ by setting
	\begin{equation}\label{tau:lambda}
		\tau :=\frac{\delta^2}{(\lambda\lip_\sigma)^4}\wedge 1,
	\end{equation}
	where $\delta=\delta(c_0)$ is the unique strictly-positive solution to
	$c_0\delta\exp(c_0\delta^2)=\frac12$. This yields
	\begin{equation}\label{PE}
		\P\left( \left. T_{n+1}- T_n >  \tau\ 
		\right|\, \cF_{T_n}\right) \ge \tfrac12
		\qquad\text{for all $n\ge 1$ a.s.},
	\end{equation}
	for the particular choice of $\tau$ that is furnished by \eqref{tau:lambda}.
	We now apply Lemma \ref{lem:LD} with
	\[
		J_n:=\1_{\{\omega\in\Omega:\,
		T_{n+1}(\omega)-T_n(\omega)> \tau \}},
	\]
	where $\tau$ is given by
	\eqref{tau:lambda} and  $q=\varepsilon=\nicefrac12$
	in order to deduce from \eqref{PE} that
	\[
		\P\left\{ \sum_{i=1}^n \1_{\{T_{i+1}-T_i> \tau \}} \le 
		\frac{n}{4}\right\} \le \e^{-n/16}
		\qquad\text{for all integers $n\ge1$}.
	\]
	Because
	\[
		T_n \ge \sum_{i=0}^{n-1}(T_{i+1}-T_i)\1_{\{T_{i+1}-T_i> \tau \}} 
		\ge \tau \sum_{i=1}^n \1_{\{T_{i+1}-T_i> \tau \}},
	\]
	we find that
	\begin{equation}\label{Ltail}
		\P\left\{ \inf_{x\in\mathbb{T}}\inf_{0\le t\le \tau n/4} u(t\,,x\,;\lambda) 
		\le \e^{-n}\underline{u}_0\right\}\le 
		\P\left\{ T_n \le \frac{\tau n}{4}\right\} \le \e^{-n/16}
		\qquad\text{for all $n\ge 1$}.
	\end{equation}
	Since
	\[
		\inf_{x\in\mathbb{T}}\inf_{\tau(n-1)/4 \le t \le \tau n/4} 
		\frac{u(t\,,x\,;\lambda)}{\exp(-n)}
		\le\e\inf_{x\in\mathbb{T}} \inf_{\tau(n-1)/4 \le t\le \tau n/4}\frac{u(t\,,x\,;\lambda)}{\exp\left(- 4t/\tau \right)},
	\]
	the Borel--Cantelli lemma implies the remaining half of Theorem \ref{th:decay:t}.
\end{proof}


\section{Proof of Theorem \ref{th:interm1}}
The proof of Theorem \ref{th:interm1} is based on  the proofs of  Theorem \ref{th:decay:t} and Theorem \ref{th:lambda:UB}.
\begin{proof}[Proof of Theorem \ref{th:interm1}]   
Let $t\geq 1$. Define $A_1(t)$ as was defined in \eqref{A}, then \eqref{P(A)} says
\begin{equation}\label{P(A1)}
 \P(A_1(t)) \geq 1- \e^{-\lambda^2 \rm{L}_\sigma^2 t/64}. 
 \end{equation}
We can also get  the following variation of \eqref{oh:boy}: 
\[
	\E\left(\sup_{x\in\mathbb{T}}
	\left| u(t+h\,,x\,;\lambda)\right|^k ;\ A_1(t) \right)
	\le \frac
	   {
	     C_1^k\e^{C_2\lambda^4 k^3 [(h/\varepsilon^2 )+\varepsilon t]}
	     \|u_0\|_{L^1}^{k(1-\varepsilon/2)}
		   \|u_0\|_{L^\infty}^{k\varepsilon/2}
	   }
		 {
		   h^{k/2} t^{k\varepsilon/2}
		 }
	\e^{-k\lambda^2{\rm L}_\sigma^2 t/16}
\]
for some constants $C_1, C_2>0$ which are independent of $k$ and $t$. We now choose 
\[ h:=\frac{1}{2} \quad \text{and}\quad \varepsilon:=\frac{\rm{L}_\sigma^2}{32C_2k^2\lambda^2}\wedge \varepsilon_0 \ \ \text{($\varepsilon_0$ is defined in Proposition \ref{pr:improved:x}}) \] to get that for some constant $\tilde{c}_k>0$ which only depends on $k$,
  \[
      \E\left(\sup_{x\in\mathbb{T}}
	    \left| u(t+1/2\,,x\,;\lambda)\right|^k ;\ A_1(t) \right)
	    \le \tilde{c}_{k} \e^{-k\lambda^2 \rm{L}^2_\sigma t/32}. 
	\]	
Replacing $t+1/2$ by $t$ above and redefining $A_1(t):=A_1(t-1/2)$, we get 
\begin{equation}\label{upper}
\E\left(\sup_{x\in\mathbb{T}}
	\left| u(t\,,x\,;\lambda)\right|^k ;\ A_1(t) \right)
	\le \tilde{c}_{k}\e^{k\lambda^2\rm{L}^2_\sigma/64} \,\e^{-k\lambda^2 \rm{L}^2_\sigma t/32}. 
\end{equation}
Let us now define 
\[ A_2(t):=\left\{\omega \in \Omega:\, \inf_{x\in\mathbb{T}}\inf_{0\le s\le t} u(s\,,x\,;\lambda)(\omega) 	\ge \e^{-4t/\tau}\underline{u}_0 \right\} \]
where $\tau=\tau(\lambda, \lip_\sigma, c_0)>0$ is the constant defined  on \eqref{tau:lambda} and $\underline{u}_0:=\inf_{y\in\mathbb{T}}u_0(y)$. By \eqref{Ltail}, we get 
\begin{equation}\label{P(A2)}
 \P(A_2(t)) \geq 1-\e^{-t/4\tau}.
\end{equation}
Let 
\[ B(t):=A_1(t)\cap A_2(t).\] 
By \eqref{P(A1)} and \eqref{P(A2)}, we have 
\[
    \begin{aligned}
       \P(B(t))&=\P(A_1(t)) +\P(A_2(t)) - \P(A_1(t) \cup A_2(t)) \\
       & \geq 1- \e^{-t/4\tau} -\e^{\lambda^2\rm{L}^2_\sigma / 128} \e^{-\lambda^2 \rm{L}_\sigma^2 t/64} \geq 1-b_1 \e^{-b_2},
    \end{aligned}
\]
where $b_1:=2\max\left\{ 1, \e^{\lambda^2\rm{L}^2_\sigma/128}\right\}$ and $b_2 := \min \left\{\frac{1}{4\tau}, \frac{\lambda^2\rm{L}^2_\sigma}{64} \right\}$.  Hence, there exists $c>0$ for some $t_0$ large, if $t\geq t_0$, 
\[ \P(B(t)) \geq 1-c\e^{-ct}\geq 1-c\e^{-ct_0}>0.\] 
This shows the first statement of Theorem \ref{th:interm1}. For the second statement, the upper bound comes from \eqref{upper} and the lower bound comes from the following:
\[
 \E\left(\inf_{x\in\mathbb{T}}
	\left| u(t\,,x\,;\lambda)\right|^k ;\ B(t) \right) \geq \e^{-4t/\tau}\underline{u}_0\, \P(B(t)) \geq \e^{-4t/\tau}\underline{u}_0 \left( 1-c\e^{-ct_0}\right),
\]
which completes the proof.

\end{proof}

\appendix


\section{A real-variable inequality}

We will have need for the following.

\begin{lemma}\label{lem:IJ}
	For all $\varepsilon\in(0\,,1)$, $\alpha\in[0\,,1)$,
	and $\beta\ge 1$,
	\[
		\sup_{t>0}
		\int_0^t\left(\frac{t}{s}\right)^{1-\varepsilon}
		\frac{\e^{-\beta(t-s)}}{(t-s)^\alpha}\,\d s 
		\le \frac{2\Gamma(1-\alpha)+ 1}{(1-\alpha)\varepsilon\beta^{1-\alpha}},
	\]
	where $\Gamma$ denotes Gamma function.
\end{lemma}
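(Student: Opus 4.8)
The plan is to freeze $t>0$, rescale so that the integral becomes a Beta-type integral weighted by an exponential, split it at the midpoint $r=\tfrac12$, and estimate the two pieces separately; the only genuine issue will be keeping the constants small enough to land inside the asserted bound. Concretely, the substitution $s=tr$ gives, for every $t>0$,
\[
	\int_0^t\left(\frac ts\right)^{1-\varepsilon}\frac{\e^{-\beta(t-s)}}{(t-s)^\alpha}\,\d s
	= t^{1-\alpha}\int_0^1 r^{\varepsilon-1}(1-r)^{-\alpha}\e^{-\beta t(1-r)}\,\d r
	=: I_1(t)+I_2(t),
\]
where $I_1(t)$ and $I_2(t)$ collect the contributions of $r\in(0\,,\tfrac12)$ and $r\in(\tfrac12\,,1)$, respectively.

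For $I_1(t)$ I would use that on $(0\,,\tfrac12)$ one has $1-r\ge\tfrac12$, so $(1-r)^{-\alpha}\le 2^\alpha$ and $\e^{-\beta t(1-r)}\le\e^{-\beta t/2}$, together with $\int_0^{1/2}r^{\varepsilon-1}\,\d r\le\varepsilon^{-1}$; this gives $I_1(t)\le 2^\alpha\varepsilon^{-1}t^{1-\alpha}\e^{-\beta t/2}$. A one-variable calculus bound (the function $t\mapsto t^{1-\alpha}\e^{-\beta t/2}$ peaks at $t=2(1-\alpha)/\beta$) then yields
\[
	\sup_{t>0}t^{1-\alpha}\e^{-\beta t/2}=\left(\frac{2(1-\alpha)}{\e\beta}\right)^{1-\alpha}\le\beta^{-(1-\alpha)},
\]
the last inequality because $2(1-\alpha)/\e<1$ and the exponent $1-\alpha$ is positive. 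Hence $I_1(t)\le 2^\alpha\varepsilon^{-1}\beta^{-(1-\alpha)}$ for all $t>0$.

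For $I_2(t)$ I would use that on $(\tfrac12\,,1)$ one has $r^{\varepsilon-1}\le 2^{1-\varepsilon}\le 2$; then, after the substitution $v=1-r$ and extension of the range of integration to $(0\,,\infty)$,
\[
	I_2(t)\le 2t^{1-\alpha}\int_0^\infty v^{-\alpha}\e^{-\beta t v}\,\d v
	= 2t^{1-\alpha}\Gamma(1-\alpha)(\beta t)^{-(1-\alpha)}
	= 2\Gamma(1-\alpha)\beta^{-(1-\alpha)}.
\]
Adding the two estimates gives, uniformly in $t>0$, the preliminary bound $I_1(t)+I_2(t)\le\bigl(2^\alpha\varepsilon^{-1}+2\Gamma(1-\alpha)\bigr)\beta^{-(1-\alpha)}$.

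It remains to match this with $(2\Gamma(1-\alpha)+1)/((1-\alpha)\varepsilon)\cdot\beta^{-(1-\alpha)}$, and I expect this elementary constant-chasing to be the only delicate point. Since $(1-\alpha)\varepsilon\le 1$ we have $2\Gamma(1-\alpha)\le 2\Gamma(1-\alpha)/((1-\alpha)\varepsilon)$; and since $\alpha\mapsto 2^\alpha(1-\alpha)$ has derivative $2^\alpha\bigl((1-\alpha)\log 2-1\bigr)\le 0$ on $[0\,,1)$ and equals $1$ at $\alpha=0$, we get $2^\alpha(1-\alpha)\le 1$, i.e.\ $2^\alpha\varepsilon^{-1}\le 1/((1-\alpha)\varepsilon)$. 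Summing these two inequalities gives exactly the claimed bound. The point is that the cruder estimate $2^\alpha\le 2$ would be too lossy, so one must retain the factor $2^\alpha$ and invoke $2^\alpha(1-\alpha)\le 1$; likewise, one needs the exact value of $\sup_{t>0}t^{1-\alpha}\e^{-\beta t/2}$, not merely its finiteness.
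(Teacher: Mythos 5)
Your proof is correct and follows essentially the same route as the paper's: after the substitution $s=tr$, you split at the midpoint so that one piece captures the $(1-r)^{-\alpha}$ singularity (yielding $2\Gamma(1-\alpha)\beta^{-(1-\alpha)}$) and the other the $r^{\varepsilon-1}$ singularity (yielding the $1/\varepsilon$ factor), exactly as in the paper. The only differences are organizational---the paper first rescales to $\beta=1$, treats $t\le1$ and $t>1$ separately, and closes with the beta-function bound ${\rm B}(\varepsilon\,,1-\alpha)\le 1/(\varepsilon(1-\alpha))$, whereas you handle all $t$ and $\beta$ uniformly via the exact value of $\sup_{t>0}t^{1-\alpha}\e^{-\beta t/2}$ and the inequality $2^\alpha(1-\alpha)\le1$---and your constant-chasing checks out.
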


\begin{proof}
	By scaling, we might as well assume that $\beta=1$.
	Now, a change of variables yields
	\[
		\int_0^t\left(\frac{t}{s}\right)^{1-\varepsilon}
		\frac{\e^{-(t-s)}}{(t-s)^\alpha}\,\d s =
		t^{1-\alpha}\int_0^1\frac{\e^{- tr}}{r^\alpha(1-r)^{1-\varepsilon}}\,\d r,
	\]
	whenever $t>0$ and $0<\varepsilon<1$. If
	$t\le 1$, then we merely bound $t^{1-\alpha}$ and
	$\exp(- tr)$  by 1 in order to see that
	the preceding is at most ${\rm B}(\varepsilon\,,1-\alpha)$,
	where $\rm B$ is beta function.
	On the other hand, if $t>1$, then we change variables a few more times
	in order to see that
	\[
		\int_0^t\left(\frac{t}{s}\right)^{1-\varepsilon}
		\frac{\e^{-(t-s)}}{(t-s)^\alpha}\,\d s = \int_0^{t}
		\frac{\e^{-s}}{\left( 1 - \dfrac{s}{t}\right)^{1-\varepsilon}
		{s^\alpha}}\,\d s
		= \int_0^{t/2}(\,\cdots)\,\d s +
		\int_{ t/2}^{t}(\,\cdots)\,\d s,
	\]
	notation being obvious from context. Since
	$(1-[s/t])^{1-\varepsilon}\ge 2^{-1+\varepsilon}\ge\nicefrac12$
	for all $0<s< t/2$,
	\[
		\int_0^{ t/2}(\,\cdots)\,\d s \le 2\Gamma(1-\alpha).
	\]
	On the other hand,
	\[
		\int_{t/2}^{t}(\,\cdots)\,\d s \le
		\e^{-t/2}\int_{t/2}^{ t}\frac{\d s}{
		s^\alpha\left( 1 - \dfrac{s}{t}\right)^{1-\varepsilon}}
		= t^{1-\alpha}\,\e^{-t/2}\int_{1/2}^1\frac{\d r}{
		r^\alpha(1-r)^{1-\varepsilon}} \le
		{\rm B}(\varepsilon\,,1-\alpha),
	\]
	where we have used the elementary bound,
	$x^{1-\alpha}\e^{-x/2}\le 1$, valid for all $x\ge 0$.
	
	The preceding argument yields the inequality,
	\[
		\sup_{t>0} \int_0^t\left(\frac{t}{s}\right)^{1-\varepsilon}
		\frac{\e^{-(t-s)}}{(t-s)^\alpha}\,\d s 
		\le 2\Gamma(1-\alpha)+ 
		{\rm B}(\varepsilon\,,1-\alpha),
	\]
	valid for all $\varepsilon\in(0\,,1)$ and $\alpha\in[0\,,1)$.
	This implies the lemma since a change of variables yields
	\[
		{\rm B}(\varepsilon\,,1-\alpha) = \varepsilon^{-1}\int_0^\infty
		\e^{-y}\left( 1- \e^{-y/\varepsilon}\right)^{-\alpha}\d y
		\le \varepsilon^{-1}\int_0^\infty\e^{-y}\left( 1- \e^{-y}
		\right)^{-\alpha}\d y = \frac{1}{\varepsilon(1-\alpha)},
	\]
	for all $\varepsilon\in(0\,,1)$ and $\alpha\in[0\,,1)$, which is a well-known fact
	about the beta integral; see, Dragomir et al.
	\cite[(3.17)]{DAB} for a different proof of the latter inequality.
\end{proof}


\section{The heat kernel}
Recall $G$ and $p$ respectively from \eqref{eq:G} and \eqref{eq:p}.
\begin{lemma}\label{lem:c_l}
	For all $x,y\in[-1\,,1]$ and $t>0$,
	\[
		G_t(x-y) \le p_t(x\,,y) \le 2\max\left( \frac{1}{\sqrt t}\,,1\right).
	\]
\end{lemma}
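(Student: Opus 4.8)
The lower bound is immediate: every summand $G_t(x-y+2n)$ in \eqref{eq:p} is nonnegative, so keeping only the $n=0$ term gives $p_t(x\,,y)\ge G_t(x-y)$.

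For the upper bound the plan is to reduce first to the on-diagonal kernel. By the Chapman--Kolmogorov equation (cf.\ \eqref{ppp}), combined with the Cauchy--Schwarz inequality and the symmetry $p_s(x\,,z)=p_s(z\,,x)$,
\[
	p_t(x\,,y) = \int_{-1}^1 p_{t/2}(x\,,z)\,p_{t/2}(z\,,y)\,\d z
	\le \left(\int_{-1}^1 [p_{t/2}(x\,,z)]^2\,\d z\right)^{1/2}
	\left(\int_{-1}^1 [p_{t/2}(z\,,y)]^2\,\d z\right)^{1/2}
	= \sqrt{p_t(x\,,x)\,p_t(y\,,y)}.
\]
Since $p_t(x\,,x)=\sum_{n\in\Z}G_t(2n)$ is independent of $x$, this bounds $p_t(x\,,y)$ by $p_t(0\,,0)=\sum_{n\in\Z}G_t(2n)$, and it remains only to estimate this one-variable quantity.

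Next I would write $\sum_{n\in\Z}G_t(2n)=(4\pi t)^{-1/2}\sum_{n\in\Z}\e^{-n^2/t}$ and compare the series to an integral: since $u\mapsto\e^{-u^2/t}$ is nonincreasing on $[0\,,\infty)$, one has $\e^{-n^2/t}\le\int_{n-1}^{n}\e^{-u^2/t}\,\d u$ for every integer $n\ge1$, so $\sum_{n\ge1}\e^{-n^2/t}\le\int_0^\infty\e^{-u^2/t}\,\d u=\tfrac12\sqrt{\pi t}$, and hence $\sum_{n\in\Z}\e^{-n^2/t}\le1+\sqrt{\pi t}$. Using $(4\pi t)^{-1/2}\sqrt{\pi t}=\tfrac12$ this gives $p_t(x\,,y)\le p_t(0\,,0)\le(4\pi t)^{-1/2}+\tfrac12$. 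Finally I would verify the elementary inequality $(4\pi t)^{-1/2}+\tfrac12\le2\max\left(t^{-1/2}\,,1\right)$ by cases: if $t\le1$ then $(4\pi t)^{-1/2}\le t^{-1/2}$ and $\tfrac12\le\tfrac12 t^{-1/2}$, so the left side is at most $\tfrac32 t^{-1/2}\le2\max(t^{-1/2}\,,1)$; if $t\ge1$ then $(4\pi t)^{-1/2}\le(4\pi)^{-1/2}<\tfrac12$, so the left side is below $1\le2\max(t^{-1/2}\,,1)$.

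There is no genuine obstacle here; the only thing to watch is the constant bookkeeping, i.e.\ arranging the estimate so that the multiple of $\max(t^{-1/2}\,,1)$ stays safely under $2$. If one wished to avoid invoking Chapman--Kolmogorov, one could instead bound the periodized series $\sum_n G_t(x-y+2n)$ directly, grouping the terms by their distance to the nearest point of the lattice $x-y+2\Z$ and comparing the off-center terms with $\int_0^\infty G_t$; this yields $p_t(x\,,y)\le2G_t(0)+\tfrac12$, again comfortably below $2\max(t^{-1/2}\,,1)$, but the on-diagonal reduction is shorter and reuses \eqref{ppp}.
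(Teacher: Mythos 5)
Your proof is correct, but it takes a genuinely different route from the paper's. The paper works directly with the off-diagonal periodized sum $\sum_{n}G_t(x-y+2n)$: it keeps the three terms with $|n|\le1$, and for $|n|\ge2$ uses the elementary inequality $(x-y-2n)^2\ge 2n^2-(x-y)^2\ge n^2$ to dominate the tail by $\sum_{n\ge2}\exp(-n^2/(4t))$, which is then handled by cases in $t$ (a crude bound for $t\le1$, an integral comparison for $t>1$). You instead first reduce to the diagonal via the semigroup identity and Cauchy--Schwarz, $p_t(x,y)\le\sqrt{p_t(x,x)\,p_t(y,y)}=p_t(0,0)$, and only then do a single clean integral comparison for $\sum_n\e^{-n^2/t}$, valid for all $t$ at once. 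Your reduction buys a simpler one-variable estimate with no case split on $n$ and no need to control the shift $x-y$; the paper's direct attack is more elementary in that it never invokes Chapman--Kolmogorov (which is mildly relevant since the paper's display \eqref{ppp} cites this very lemma for its final inequality --- you only use the semigroup identity from \eqref{ppp}, not that estimate, so there is no circularity, but it is worth being explicit about that). Your closing remark, grouping the off-center terms of the periodized series directly, is essentially the paper's argument. The constant bookkeeping in your version ($(4\pi t)^{-1/2}+\tfrac12\le 2\max(t^{-1/2},1)$) checks out in both cases.
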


\begin{remark}
	By Lemma \ref{lem:c_l},
	$\sup_{-1<x,y<1} p_t(x\,,y) \ge G_t(0) = (4\pi t)^{-1/2}$,
	pointwise. Also, $2\sup_y p_t(x\,,y) \ge \int_{-1}^1
	p_t(x\,,y) \,\d y = 1$, for all $t>0$ and $x\in\mathbb{T}$. Therefore,
	Lemma \ref{lem:c_l} has the following consequence:
	\[
		\frac14 \max\left( \frac{1}{\sqrt t}\,,1\right)\le
		\sup_{x,y\in[-1,1]} p_t(x\,,y) \le 2\max\left( \frac{1}{\sqrt t}\,,1\right),
	\]
	for all $t>0$.
\end{remark}

\begin{proof}[Proof of Lemma \ref{lem:c_l}]
	The lower bound is immediate; we establish the upper bound.
	
	Consider the summands in \eqref{eq:p} for $|n|\le 1$ and 
	$|n|\ge 2$ separately in order to see that 
	\[
		p_t(x\,,y) \le\frac{3}{\sqrt{4\pi t}} + \frac{1}{\sqrt{4\pi t}}
		\sum_{\substack{n\in\Z:\\|n|\ge2}}
		\exp\left( -\frac{(x-y-2n)^2}{4t}\right),
	\]
	for all $t>0$ and $x,y\in[-1\,,1]$. Since $(a-b)^2\ge \frac12 a^2-b^2$
	for all $a,b\in\R$, the preceding yields 
	$(x-y-2n)^2 \ge 2n^2 - (x-y)^2 \ge 2 (n^2-2)
	\ge n^2,$
	for all $x,y\in[-1\,,1]$ and integers $n$ with $|n|\ge 2$. Thus, we obtain
	the bound,
	\begin{equation}\label{peexp}
		p_t(x\,,y) \le\frac{1}{\sqrt{t}} + \frac{1}{\sqrt{\pi t}}
		\sum_{n=2}^\infty\exp\left( -\frac{n^2}{4t}\right),
	\end{equation}
	for all $t>0$ and $x,y\in[-1\,,1]$. In particular,
	\[
		\sup_{x,y\in[-1,1]}
		p_t(x\,,y) \le \frac{1}{\sqrt{t}}\left(1 + 
		\sum_{n=2}^\infty \e^{-n^2/4}\right)\le\frac{2}{\sqrt t},
	\]
	uniformly for $t\in(0\,,1]$. If $t>1$, then use
	\[
		\sum_{n=2}^\infty\exp\left( -\frac{n^2}{4t}\right)
		\le \int_0^\infty \exp\left( - \frac{z^2}{4t}\right)\d z
		= \sqrt{\pi t},
	\]
	in \eqref{peexp}, to see that
	\[
		\sup_{x,y\in[-1,1]} p_t(x\,,y) \le \frac{1}{\sqrt{t}} + 1,
	\]
	which is at most $2$.
\end{proof}

\begin{lemma}\label{lem:p-p:x:1}
	There exists a finite constant $C$ such that
	\[
		\int_{-1}^1\left| p_t(x\,,w)-p_t(y\,,w)\right|^2\d w \le
		C\frac{|x-y|}{t\wedge \sqrt t},
	\]
	uniformly for all $x,y\in[-1\,,1]$ and $t>0$.
\end{lemma}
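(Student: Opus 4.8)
The plan is to evaluate the left-hand side exactly using the $L^2(\mathbb{T})$-orthogonality encoded in the heat kernel, and then reduce the whole estimate to an elementary bound on a Gaussian lacunary sum. First I would use the Chapman--Kolmogorov equation together with the symmetry $p_\tau(a,b)=p_\tau(b,a)$, exactly as in \eqref{ppp}: since $\int_{-1}^1 p_t(x,w)p_t(y,w)\,\d w=\int_{-1}^1 p_t(x,w)p_t(w,y)\,\d w=p_{2t}(x,y)$, expanding the square gives
\[
	\int_{-1}^1\left|p_t(x,w)-p_t(y,w)\right|^2\d w
	= p_{2t}(x,x)-2p_{2t}(x,y)+p_{2t}(y,y)
	= 2\bigl(p_{2t}(x,x)-p_{2t}(x,y)\bigr),
\]
where the last equality uses that $p_\tau(a,a)=\sum_{n}G_\tau(2n)$ is independent of $a\in\mathbb{T}$. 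Thus it suffices to bound $p_{2t}(x,x)-p_{2t}(x,y)$ by a constant multiple of $|x-y|\,/\,(t\wedge\sqrt t)$.

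Next I would pass from the image-sum form \eqref{eq:p} to the spectral form of the periodic heat kernel, $p_\tau(a,b)=\tfrac12\sum_{k\in\Z}\e^{-\pi^2k^2\tau}\cos(\pi k(a-b))$, which is the Poisson-summation dual of \eqref{eq:p} and is checked at once to solve $\partial_\tau p=\partial_x^2 p$ on $\mathbb{T}$ with $p_0(a,b)=\delta(a-b)$. This yields
\[
	p_{2t}(x,x)-p_{2t}(x,y)=\sum_{k=1}^\infty \e^{-2\pi^2k^2t}\bigl(1-\cos(\pi k(x-y))\bigr),
\]
and the key inequality $1-\cos\theta=2\sin^2(\theta/2)\le|\theta|$ then gives $p_{2t}(x,x)-p_{2t}(x,y)\le \pi|x-y|\sum_{k\ge1}k\,\e^{-2\pi^2k^2t}$.

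It then remains to show $\sum_{k\ge1}k\,\e^{-2\pi^2k^2t}\le C/(t\wedge\sqrt t)$ uniformly in $t>0$ (in fact $\le C/t$ for all $t>0$, which is stronger). For $0<t\le1$ I would compare the sum with $\int_0^\infty x\,\e^{-2\pi^2x^2t}\,\d x=(4\pi^2t)^{-1}$, controlling the single extra term near the mode by $\sup_{x>0}x\,\e^{-2\pi^2x^2t}=O(t^{-1/2})\le O(t^{-1})$; for $t\ge1$ I would factor out $\e^{-2\pi^2(t-1)}$ and use that $\sqrt t\,\e^{-2\pi^2(t-1)}$ is bounded on $[1,\infty)$. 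Combining the three displays completes the proof with an absolute constant $C$.

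I do not expect a genuine obstacle. The only points needing care are getting the normalisation of the Fourier expansion (hence the absolute constant) right, and the uniformity of the sum estimate as $t\downarrow0$ — where the naive term-by-term bound $1-\cos\theta\le\theta^2/2$ would lose a power of $|x-y|$, so it is essential to interpolate and use $1-\cos\theta\le|\theta|$ instead.
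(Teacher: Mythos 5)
Your argument is correct. You and the paper share the identical first step: Chapman--Kolmogorov plus symmetry reduces the left-hand side to $p_{2t}(x,x)+p_{2t}(y,y)-2p_{2t}(x,y)$. From there the routes diverge. The paper stays with the image-sum representation \eqref{eq:p}, writes the difference as $2\sum_n[G_{2t}(2n)-G_{2t}(2n+x-y)]$, and bounds each increment via the derivative identity $|G_t'(a)|=|a|G_t(a)/2t$, splitting the sum into small and large $|n|$; the small-$|n|$ terms produce the $|x-y|/t$ contribution and the tail produces the $|x-y|/\sqrt t$ contribution, whence the $t\wedge\sqrt t$ in the statement. You instead pass to the spectral representation $p_\tau(a,b)=\tfrac12\sum_k\e^{-\pi^2k^2\tau}\cos(\pi k(a-b))$ (whose normalisation you have right --- it is exactly the Poisson-summation dual the paper itself uses in Lemma \ref{lem:p-p:t:1}), and the whole estimate collapses to $1-\cos\theta\le|\theta|$ plus the elementary bound $\sum_{k\ge1}k\,\e^{-2\pi^2k^2t}\le C/t$; your integral-comparison and large-$t$ factoring arguments for that sum are both sound. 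Your route is arguably cleaner --- no case analysis over the image index, and it yields the slightly stronger uniform bound $C|x-y|/t$ for all $t>0$, which dominates $C|x-y|/(t\wedge\sqrt t)$ --- while the paper's method has the minor advantage of working directly from the definition \eqref{eq:p} without invoking the Fourier expansion. Your closing caveat is also exactly the right one: the Lipschitz-type bound $1-\cos\theta\le|\theta|$, rather than the quadratic bound, is what produces the first power of $|x-y|$.
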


\begin{proof}
	Choose and fix some $t>0$ and
	$x,y\in[-1\,,1]$.
	Without loss of generality, we assume that $x>y$.
	By the Chapman--Kolmogorov property, and thanks to the symmetry of
	$p_t$,
	\begin{align*}
		\int_{-1}^1\left| p_t(x\,,w)-p_t(y\,,w)\right|^2\d w
			&= p_{2t}(x\,,x) + p_{2t}(y\,,y) - 2p_{2t}(x\,,y)\\
		&= 2\sum_{n=-\infty}^\infty\left[ G_{2t}(2n) - G_{2t}(2n+x-y)\right].
	\end{align*}
	Because $|G_t'(a)| = |a|G_t(a)/2t$,
	\begin{equation}\label{eq:G:G}
		\left| G_t(2n) - G_t(2n+x-y)\right|
		\le \frac{1}{4\sqrt{\pi} t^{3/2}}\int_0^{x-y}|2n+a|\exp\left( -
		\frac{(2n+a)^2}{4t}\right)\d a,
	\end{equation}
	for all $n\in\Z$. In particular,
	\[
		\left| G_t(2n) - G_t(2n+x-y)\right|
		\le \frac{x-y}{4\sqrt{\pi} t^{3/2}}\sup_{r\ge0}\left[ r\exp\left( -
		\frac{r^2}{4t}\right)\right] \le \frac{x-y}{4\sqrt{\pi} t}.
	\]
	The preceding is useful when $|n|$ is not too large, say $|n|\le 2$. 
	On the other hand, if $|n|\ge 2$ and $0\le a\le 2$, then $3|n|\ge|2n+a|\ge
	2(|n|-1)\ge|n|$ . Therefore, \eqref{eq:G:G} implies that
	\[
		\left| G_t(2n) - G_t(2n+x-y)\right|
		\le \frac{3(x-y)}{4\sqrt{\pi} t^{3/2}}\cdot |n|\exp\left( -
		\frac{n^2}{4t}\right).
	\]
	We combine the preceding two displays to see that
	\[
		\int_{-1}^1\left| p_t(x\,,w)-p_t(y\,,w)\right|^2\d w
		\le \frac{3(x-y)}{2\sqrt{\pi} t} + \frac{3(x-y)}{2\sqrt{\pi} t^{3/2}}\cdot
		\sum_{|n|\ge 2}|n|\exp\left( - \frac{n^2}{4t}\right).
	\]
	The sum is at most $\int_{-\infty}^\infty|w|\exp(-w^2/(8t))\,\d w\propto t$,
	and this implies the result.
\end{proof}

\begin{lemma}\label{lem:p-p:x:2}
	For each $\delta\in(0\,,1)$
	there exists a finite constant $C$ such that
	\[
		\int_{-1}^1\left| p_t(x\,,w)-p_t(y\,,w)\right|^2\d w \le
		C\frac{|x-y|^\delta}{t^{(\delta+1)/2}\wedge t^{\delta/2}},
	\]
	uniformly for all $t>0$ and $x,y\in[-1\,,1]$.
\end{lemma}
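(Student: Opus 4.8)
The plan is to prove Lemma \ref{lem:p-p:x:2} by interpolating between two endpoint $L^2(\mathbb{T})$ estimates for the function $w\mapsto p_t(x\,,w)-p_t(y\,,w)$: the ``$\delta=1$'' estimate, which is exactly Lemma \ref{lem:p-p:x:1}, and a crude ``$\delta=0$'' estimate which follows from the Chapman--Kolmogorov identity and Lemma \ref{lem:c_l}.

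First I would record the crude bound. By the elementary inequality $(a-b)^2\le 2a^2+2b^2$, followed by the Chapman--Kolmogorov identity and the symmetry of $p_t$,
\[
	\int_{-1}^1\left| p_t(x\,,w)-p_t(y\,,w)\right|^2\d w
	\le 2p_{2t}(x\,,x)+2p_{2t}(y\,,y),
\]
and Lemma \ref{lem:c_l} bounds the right-hand side by $8\max(1/\sqrt t\,,1)=8/(\sqrt t\wedge 1)$, uniformly for $x,y\in[-1\,,1]$ and $t>0$. Call the common left-hand side $X$, set $B:=8/(\sqrt t\wedge 1)$, and write $A:=C'|x-y|/(t\wedge\sqrt t)$ for the bound supplied by Lemma \ref{lem:p-p:x:1}, where $C'$ is the absolute constant appearing there.

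Next I would invoke the elementary fact that a nonnegative quantity satisfying $X\le A$ and $X\le B$ also satisfies $X\le\min(A\,,B)\le A^\delta B^{1-\delta}$ for every $\delta\in[0\,,1]$ (since $\min(A,B)=\min(A,B)^\delta\min(A,B)^{1-\delta}\le A^\delta B^{1-\delta}$). This gives
\[
	\int_{-1}^1\left| p_t(x\,,w)-p_t(y\,,w)\right|^2\d w
	\le 8^{1-\delta}(C')^{\delta}\,\frac{|x-y|^{\delta}}{(t\wedge\sqrt t)^{\delta}\,(\sqrt t\wedge 1)^{1-\delta}}.
\]
It then remains only to identify the denominator with $t^{(\delta+1)/2}\wedge t^{\delta/2}$. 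I would do this by cases: if $t\le 1$ then $t\wedge\sqrt t=t$ and $\sqrt t\wedge 1=\sqrt t$, so the denominator equals $t^{\delta}t^{(1-\delta)/2}=t^{(\delta+1)/2}$, which agrees with $t^{(\delta+1)/2}\wedge t^{\delta/2}$ because $(\delta+1)/2>\delta/2$ and $t\le 1$; if $t>1$ then $t\wedge\sqrt t=\sqrt t$ and $\sqrt t\wedge 1=1$, so the denominator equals $t^{\delta/2}$, which again agrees with $t^{(\delta+1)/2}\wedge t^{\delta/2}$ since now $t^{\delta/2}\le t^{(\delta+1)/2}$. This yields the lemma with $C:=8^{1-\delta}(C')^{\delta}$, a finite constant depending only on $\delta$.

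There is no genuine obstacle here; the only place requiring care is the piecewise verification of the power-of-$t$ identity and making sure the normalization $t\wedge\sqrt t$ of Lemma \ref{lem:p-p:x:1} is used consistently. One should also note that the bound on $|x-y|^\delta$ comes out directly from the interpolation and needs no extra appeal to the boundedness of the torus.
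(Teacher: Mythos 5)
Your proof is correct and follows essentially the same route as the paper: both derive the crude bound $8\max(t^{-1/2},1)$ from Chapman--Kolmogorov and Lemma \ref{lem:c_l}, combine it with Lemma \ref{lem:p-p:x:1}, and conclude via $\min(A,B)\le A^\delta B^{1-\delta}$. Your explicit case analysis identifying $(t\wedge\sqrt t)^\delta(\sqrt t\wedge 1)^{1-\delta}$ with $t^{(\delta+1)/2}\wedge t^{\delta/2}$ is a correct (and slightly more careful) spelling-out of the step the paper leaves implicit.
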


\begin{proof}
	Since $(a+b)^2\le2a^2+2b^2$ for all $a,b\in\R$, the Chapman--Kolmogorov
	property yields the following for all $t>0$ and $x,y\in[-1\,,1]$:
	\[
		\int_{-1}^1\left| p_t(x\,,w)- p_t(y\,,w) \right|^2\,\d w
		\le 2p_{2t}(x\,,x)+2p_{2t}(y\,,y)\le 8\max\left(\frac{1}{\sqrt t},1\right);
	\]
	see Lemma \ref{lem:c_l}.  Therefore, Lemma \ref{lem:p-p:x:1} implies that
	we can find a finite constant $C$ such that, uniformly for all $x,y\in[-1\,,1]$
	and $t>0$,
	\[
		\int_{-1}^1\left| p_t(x\,,w)- p_t(y\,,w) \right|^2\,\d w
		\le C\frac{|x-y|}{\sqrt{t}}\max\left\{\frac{1}{\sqrt{t}},1\right\},
	\]
	The lemma follows since $\min(A\,,B)\le A^\delta B^{1-\delta}$
	for all $A,B\ge0$ and $\delta\in(0\,,1)$.
\end{proof}

\begin{lemma}\label{lem:P_th(x)-P_th(y)}
	There exists a finite constant $C$ such that,
	uniformly for all $\varepsilon\in(0\,,1)$,  $t>0$, $x,y\in[-1\,,1]$, and 
	$h\in L^1\cap L^\infty$,
	\[
		\left| (P_th)(x) - (P_th)(y) \right| \le C
		\max\left\{1\,, \frac{1}{\sqrt{t}}\right\}|x-y|^{\varepsilon/2} 
		\|h\|_{L^\infty}^{\varepsilon}\|h\|_{L^1}^{1-\varepsilon}.
	\]
\end{lemma}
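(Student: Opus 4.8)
The plan is to produce two separate pointwise estimates for $|(P_th)(x)-(P_th)(y)|$---one carrying the factor $\|h\|_{L^\infty}$ and one carrying the factor $\|h\|_{L^1}$---and then interpolate between them by the trivial observation that a nonnegative quantity bounded by both $M_1$ and $M_2$ is bounded by $M_1^\varepsilon M_2^{1-\varepsilon}$. The starting point is the identity $(P_th)(x)-(P_th)(y)=\int_{-1}^1[p_t(x\,,w)-p_t(y\,,w)]h(w)\,\d w$, valid for $h\in L^1\cap L^\infty$; the two estimates are obtained by pulling absolute values through this integral in the two natural ways.

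For the $L^\infty$-flavored bound I would write $|(P_th)(x)-(P_th)(y)|\le\|h\|_{L^\infty}\int_{-1}^1|p_t(x\,,w)-p_t(y\,,w)|\,\d w$, apply the Cauchy--Schwarz inequality on the interval of length $2$ to get $\int_{-1}^1|p_t(x\,,w)-p_t(y\,,w)|\,\d w\le\sqrt2\,(\int_{-1}^1|p_t(x\,,w)-p_t(y\,,w)|^2\,\d w)^{1/2}$, and then invoke Lemma \ref{lem:p-p:x:1}, which bounds the last $L^2$ expression by $C|x-y|/(t\wedge\sqrt t)$. Since $(t\wedge\sqrt t)^{-1/2}$ equals $1/\sqrt t$ when $t\le1$ and equals $t^{-1/4}\le1$ when $t>1$, one has $(t\wedge\sqrt t)^{-1/2}\le\max\{1\,,1/\sqrt t\}$, which yields a bound of the form $C_1\max\{1\,,1/\sqrt t\}\,|x-y|^{1/2}\|h\|_{L^\infty}$. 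For the $L^1$-flavored bound I would instead write $|(P_th)(x)-(P_th)(y)|\le\|h\|_{L^1}\sup_{w\in\mathbb{T}}|p_t(x\,,w)-p_t(y\,,w)|\le 2\|h\|_{L^1}\sup_{x,w}p_t(x\,,w)$ and apply the upper bound $p_t(\cdot\,,\cdot)\le2\max\{1\,,1/\sqrt t\}$ from Lemma \ref{lem:c_l}, giving a bound $C_2\max\{1\,,1/\sqrt t\}\,\|h\|_{L^1}$.

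Finally, applying the interpolation remark with the two displayed bounds---both of which carry exactly one power of $\max\{1\,,1/\sqrt t\}$---gives
\[
	|(P_th)(x)-(P_th)(y)|\le C_1^\varepsilon C_2^{1-\varepsilon}\max\{1\,,1/\sqrt t\}\,|x-y|^{\varepsilon/2}\|h\|_{L^\infty}^\varepsilon\|h\|_{L^1}^{1-\varepsilon},
\]
and bounding $C_1^\varepsilon C_2^{1-\varepsilon}\le\max(C_1\,,C_2)=:C$ completes the proof, with $C$ independent of $\varepsilon$, $t$, $x$, $y$, and $h$. I do not expect a genuine obstacle here; the only points deserving a moment's care are that the exponent of $\max\{1\,,1/\sqrt t\}$ remains $1$ after interpolation (which holds because it occurs to the first power in each of the two bounds, and $\varepsilon+(1-\varepsilon)=1$) and that the $t\wedge\sqrt t$ in the denominator of Lemma \ref{lem:p-p:x:1} is correctly absorbed into $\max\{1\,,1/\sqrt t\}$ after taking the square root.
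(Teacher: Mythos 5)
Your proof is correct and follows essentially the same route as the paper's: an $L^1$-bound via the uniform kernel estimate of Lemma \ref{lem:c_l}, an $L^\infty$-bound via Cauchy--Schwarz and Lemma \ref{lem:p-p:x:1}, and then the interpolation $\min(A\,,B)\le A^\varepsilon B^{1-\varepsilon}$. The only cosmetic difference is that the paper applies Cauchy--Schwarz to the product $|p_t(x\,,\cdot)-p_t(y\,,\cdot)|\,|h|$ directly rather than first pulling out $\|h\|_{L^\infty}$, which yields the same estimate.
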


\begin{proof}

First, we use Lemma \ref{lem:c_l} to get that 
	\[ 
	\left| (P_th)(x) - (P_th)(y) \right| \le 4\max\left\{1
	\,, \frac{1}{\sqrt{t}}\right\}\|h\|_{L^1} .
	\]
	We can also apply  the Cauchy--Schwarz inequality and then Lemma \ref{lem:p-p:x:1} 
	to obtain
	\begin{align*}
		\left| (P_th)(x)  - (P_th)(y) \right|^2 &\le 
			\left(\int_{-1}^1 |p_t(x\,,w)-p_t(y\,,w)| \cdot |h(w)|\,\d w\right)^2\\
		&\le 2\|h\|_{L^\infty}^2\int_{-1}^1|p_t(x\,,w)-p_t(y\,,w)| ^2\,\d w\\
		&\le C \|h\|_{L^\infty}^2 \frac{|x-y| }{t\wedge \sqrt{t}}\\
		&\le C \|h\|_{L^\infty}^2|x-y| \max\left\{1\,, \frac{1}{t}\right\}.		
	\end{align*}
	Now the lemma follows since $\min(A\,,B)\le A^\varepsilon B^{1-\varepsilon}$
	for all $A,B\ge0$ and $\varepsilon\in(0\,,1)$.
\end{proof}

\begin{lemma}\label{lem:p-p:t:1}
	For each $t,\delta>0$,
	\[
		\sup_{x\in\mathbb{T}}
		\int_{-1}^1\left| p_{t+\delta}(x\,,w)-p_t(x\,,w)\right|^2\d w \le
		\sqrt{\frac{\pi}{2t}}\min\left(1\,,\frac{\delta}{4t}\right).
	\]
\end{lemma}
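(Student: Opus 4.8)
The plan is to pass from the integral to an on-diagonal quantity via the Chapman--Kolmogorov equation, and then to evaluate that quantity through the theta-function expansion of $p_\tau(x\,,x)$. Since $p_\tau$ is symmetric, $\int_{-1}^1 p_a(x\,,w)p_b(x\,,w)\,\d w=\int_{-1}^1 p_a(x\,,w)p_b(w\,,x)\,\d w=p_{a+b}(x\,,x)$ by Chapman--Kolmogorov, so expanding the square yields
\[
	\int_{-1}^1\left|p_{t+\delta}(x\,,w)-p_t(x\,,w)\right|^2\d w
	= p_{2t+2\delta}(x\,,x)-2\,p_{2t+\delta}(x\,,x)+p_{2t}(x\,,x).
\]
In particular the left-hand side is independent of $x$, so the supremum over $x\in\mathbb{T}$ in the statement is automatic. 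I would then use the classical identity $p_\tau(x\,,x)=\tfrac12\sum_{k\in\Z}\e^{-\pi^2k^2\tau}$, valid for every $x\in\mathbb{T}$ and $\tau>0$ --- this is the eigenfunction expansion of the periodic Laplacian on $[-1\,,1]$, and also follows from \eqref{eq:p} by Poisson summation --- to rewrite the right-hand side as $\tfrac12\sum_{k\in\Z}\e^{-2\pi^2k^2t}(1-\e^{-\pi^2k^2\delta})^2$. The $k=0$ term vanishes and the summand is even in $k$, so this equals $S:=\sum_{k\ge1}\e^{-2\pi^2k^2t}(1-\e^{-\pi^2k^2\delta})^2$.

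It then suffices to prove the two bounds $S\le\sqrt{\pi/(2t)}$ and $S\le\tfrac{\delta}{4t}\sqrt{\pi/(2t)}$ separately, since the lemma asserts the minimum of the two. For the first, I would bound $(1-\e^{-\pi^2k^2\delta})^2\le1$ and, using that $x\mapsto\e^{-2\pi^2tx^2}$ decreases on $[0\,,\infty)$, compare the sum to a Gaussian integral:
\[
	S\le\sum_{k\ge1}\e^{-2\pi^2k^2t}\le\int_0^\infty\e^{-2\pi^2tx^2}\,\d x=\frac{1}{2\sqrt{2\pi t}}=\frac{1}{2\pi}\sqrt{\frac{\pi}{2t}},
\]
which is comfortably below $\sqrt{\pi/(2t)}$. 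For the second, I would use $(1-\e^{-y})^2\le 1-\e^{-y}\le y$ with $y=\pi^2k^2\delta$ to get $S\le\pi^2\delta\sum_{k\ge1}k^2\e^{-2\pi^2k^2t}$, and control the theta-type sum by peeling a factor off the exponential: since $r^2\e^{-\pi^2tr^2}\le(\e\pi^2t)^{-1}$ for all $r>0$, one has $k^2\e^{-2\pi^2k^2t}\le(\e\pi^2t)^{-1}\e^{-\pi^2k^2t}$, hence
\[
	\sum_{k\ge1}k^2\e^{-2\pi^2k^2t}\le\frac{1}{\e\pi^2t}\int_0^\infty\e^{-\pi^2tx^2}\,\d x=\frac{1}{2\e\,\pi^{5/2}\,t^{3/2}}.
\]
Feeding this back gives $S\le\delta/(2\e\sqrt{\pi}\,t^{3/2})$, and since $2\sqrt2\le\e\pi$ this is at most $\tfrac{\delta}{4t}\sqrt{\pi/(2t)}$, as required.

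The only step that needs a moment's care is that $x\mapsto x^2\e^{-2\pi^2tx^2}$ is not monotone, so one cannot compare $\sum_{k\ge1}k^2\e^{-2\pi^2k^2t}$ to $\int_0^\infty x^2\e^{-2\pi^2tx^2}\,\d x$ term by term; splitting off the factor $\e^{-\pi^2k^2t}$ and bounding the leftover $k^2\e^{-\pi^2k^2t}$ by its explicit maximum avoids this, and --- importantly --- makes the resulting estimate uniform in $t>0$. Everything else is elementary calculus together with bookkeeping of the (generous) numerical constants, and the same two estimates for $S$ are what I would write out in full.
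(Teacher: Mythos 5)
Your proof is correct and follows essentially the same route as the paper's: Chapman--Kolmogorov reduces the integral to on-diagonal heat-kernel values, the spectral/Poisson-summation identity $p_\tau(x\,,x)=\tfrac12\sum_{k\in\Z}\e^{-\pi^2k^2\tau}$ turns it into a theta-type sum, and Gaussian integral comparisons finish the job. Your handling of the $k^2$-weighted sum (peeling off $\e^{-\pi^2k^2t}$ and bounding the remainder by its supremum) is in fact a bit more careful than the paper's direct sum-to-integral comparison of the non-monotone summand, but the argument is the same in structure and your constants are more than sufficient.
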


\begin{proof}
	Choose and fix some $t,\delta>0$ and $x\in\mathbb{T}$.
	By the Chapman--Kolmogorov property, and thanks to the symmetry of
	$p_t$,
	\begin{align*}
		\int_{-1}^1\left| p_{t+\delta}(x\,,w)-p_t(x\,,w)\right|^2\d w
			&= p_{2(t+\delta)}(x\,,x) + p_{2t}(x\,,x) - 2p_{2t+\delta}(x\,,x)\\
		&= \sum_{n=-\infty}^\infty \left[ G_{2t+2\delta}(2n) +
			G_{2t}(2n)-2G_{2t+\delta}(2n)\right].
	\end{align*}
	Because the Fourier transform of $F(x):=G_\tau(2x)$ is 
	$\widehat{F}(z)=\frac12\widehat{G}_\tau(z/2)=\frac12\exp(-\tau z^2/4),$
	the Poisson summation formula \cite[page 161]{Katznelson} implies that
	\[
		\int_{-1}^1\left| p_{t+\delta}(x\,,w)-p_t(x\,,w)\right|^2\d w
		=\tfrac12 \sum_{n=-\infty}^\infty
		\e^{-tn^2/2}\left(\e^{-\delta n^2/2}+ 1- 2\e^{-\delta n^2/4}\right)
		\le  \sum_{n=1}^\infty
		\e^{-tn^2/2}\min\left(1\,, \frac{\delta n^2}{4}\right),
	\]
	uniformly for all $t>0$ and $\delta\in(0\,,1)$.  This readily implies
	the result since
	\[
		\sum_{n=1}^\infty \e^{-tn^2/2} \le \int_0^\infty \e^{-tx^2/2}\,\d x
		=\sqrt{\frac{\pi}{2t}},
	\]
	and
	\[
		\delta \sum_{n=1}^\infty \e^{-tn^2/2} n^2 \le\delta\int_0^\infty
		x^2\e^{-tx^2/2}\,\d x=\frac{\delta}{t}\sqrt{\frac{\pi}{2t}},
	\]
	for all $t>0$ and $\delta\in(0\,,1)$.
\end{proof}

\begin{lemma}\label{lem:P_th(x)-P_sh(x)}
	For every $t,\delta>0$, $\varepsilon\in(0\,,1)$,  and $h\in L^1\cap L^\infty$,
	\[
		\sup_{x\in\mathbb{T}}
		\left| (P_{t+\delta}h)(x) - (P_th)(x) \right| \le 4\left(1+\frac{1}{\sqrt{t}} \right) 
		\min\left(1\,,\left[\frac{\delta}{4t}\right]^{\varepsilon/2}\right)
		\cdot
		\|h\|_{L^\infty}^{\varepsilon}\|h\|_{L^1}^{1-\varepsilon}.
	\]
\end{lemma}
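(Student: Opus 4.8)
The plan is to mirror the proof of Lemma \ref{lem:P_th(x)-P_th(y)}: I would establish two separate pointwise bounds on $(P_{t+\delta}h)(x)-(P_th)(x)$---one controlled by $\|h\|_{L^1}$ and one controlled by $\|h\|_{L^\infty}$ and carrying the temporal increment $\delta$---and then interpolate between them using the elementary inequality $\min(A\,,B)\le A^\varepsilon B^{1-\varepsilon}$, valid for $A,B\ge0$ and $\varepsilon\in(0\,,1)$.

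For the $L^1$ bound, I would use the upper estimate $p_\tau(x\,,y)\le 2\max(\tau^{-1/2}\,,1)$ from Lemma \ref{lem:c_l} to write $|(P_\tau h)(x)|\le\int_{-1}^1 p_\tau(x\,,y)|h(y)|\,\d y\le 2\max(\tau^{-1/2}\,,1)\|h\|_{L^1}$ for $\tau=t$ and $\tau=t+\delta$, and then the triangle inequality and $\max(t^{-1/2}\,,1)\le 1+t^{-1/2}$ give
\[
	\left|(P_{t+\delta}h)(x)-(P_th)(x)\right|\le 4\left(1+\frac1{\sqrt t}\right)\|h\|_{L^1},
\]
uniformly in $x$, $t$, $\delta$. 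For the $L^\infty$ bound, I would apply the Cauchy--Schwarz inequality on $[-1\,,1]$ together with Lemma \ref{lem:p-p:t:1}: with $f:=p_{t+\delta}(x\,,\cdot)-p_t(x\,,\cdot)$,
\[
	\left|(P_{t+\delta}h)(x)-(P_th)(x)\right|^2\le\|h\|_{L^\infty}^2\left(\int_{-1}^1|f|\,\d w\right)^2\le 2\|h\|_{L^\infty}^2\int_{-1}^1|f|^2\,\d w\le 2\|h\|_{L^\infty}^2\sqrt{\frac\pi{2t}}\min\left(1\,,\frac\delta{4t}\right).
\]
Taking square roots, using $\sqrt{\min(1\,,\delta/4t)}=\min(1\,,(\delta/4t)^{1/2})$, and observing that the residual prefactor is $2^{1/4}\pi^{1/4}t^{-1/4}\le 4(1+t^{-1/2})$ (indeed the absolute constant here is well below $2$), I would obtain
\[
	\left|(P_{t+\delta}h)(x)-(P_th)(x)\right|\le 4\left(1+\frac1{\sqrt t}\right)\min\left(1\,,\left[\frac\delta{4t}\right]^{1/2}\right)\|h\|_{L^\infty}.
\]

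Finally I would combine the two displays via $\min(A\,,B)\le A^\varepsilon B^{1-\varepsilon}$, taking $A$ to be the $L^\infty$ bound (raised to the power $\varepsilon$, which also carries the $\min$-factor to the power $\varepsilon$) and $B$ the $L^1$ bound (raised to $1-\varepsilon$), and then use $\min(1\,,x)^\varepsilon=\min(1\,,x^\varepsilon)$ to put the exponent in the stated form. The only thing requiring any care is the bookkeeping of constants: one checks that $C_\infty^\varepsilon\,4^{1-\varepsilon}\le 4$ since the $L^\infty$-constant $C_\infty$ is itself at most $4$, and similarly that the $t$-dependent factor $(1+t^{-1/2})$ is common to both bounds so that it survives interpolation unchanged. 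There is no real obstacle beyond this routine constant-tracking; the genuine content is entirely carried by Lemma \ref{lem:p-p:t:1}, which is already available.
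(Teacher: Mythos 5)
Your proposal is correct and follows essentially the same route as the paper's proof: the same $L^1$ bound via Lemma \ref{lem:c_l}, the same Cauchy--Schwarz step combined with Lemma \ref{lem:p-p:t:1} for the $L^\infty$ bound, and the same interpolation $\min(A\,,B)\le A^\varepsilon B^{1-\varepsilon}$ at the end. The constant bookkeeping you describe checks out, so there is nothing to add.
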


\begin{proof}

We first use Lemma \ref{lem:c_l}, as we did in the proof of Lemma \ref{lem:P_th(x)-P_th(y)}, to get that
\[ \left| (P_{t+\delta}h)(x) - (P_th)(x) \right| \le 4\max\left(1, \frac{1}{\sqrt{t}} \right) \|h\|_{L^1}.\]
We now apply first the Cauchy--Schwarz inequality and then Lemma \ref{lem:p-p:t:1} in order to deduce
	\begin{align*}
		\left| (P_{t+\delta}h)(x)  - (P_th)(x) \right|^2 
			&\le 2\|h\|_{L^\infty}^{2}
			\int_{-1}^1|p_{t+\delta}(x\,,w)-p_t(x\,,w)| ^2\,\d w \\
		&\le  2\|h\|_{L^\infty}^{2} \sqrt{\frac{\pi}{2t}}\min\left(1\,,\frac{\delta}{4t}\right)\\
		&\le  4\max\left(1\,, \frac{1}{t} \right) \|h\|_{L^\infty}^{2}
			\min\left(1\,,\frac{\delta}{4t}\right).
	\end{align*}
	Now the lemma follows since  $\max\{A\,, B\} \leq A+B$ and
	$\min(A\,,B)\le A^\varepsilon B^{1-\varepsilon}$
	for all $A,B\ge0$ and $\varepsilon\in(0\,,1)$.
\end{proof}


\bigskip

\begin{small}
\noindent {\bf D. Khoshnevisan.} Department of Mathematics, University of Utah,
	Salt Lake City, UT 84112-0090, USA,
	\texttt{davar@math.utah.edu}\\[.2cm]
\noindent {\bf Kunwoo Kim.} Department of  Mathematics, Pohang University of Science and Technology (POSTECH), Pohang, Gyeongbuk, Korea 37673,
	\texttt{kunwoo@postech.ac.kr}\\[.2cm]
\noindent\textbf{Carl Mueller.} Department of Mathematics, University of Rochester,
	Rochester, NY 14627, USA,\\
	\texttt{http://www.math.rochester.edu/people/faculty/cmlr}\\[.2cm]
\noindent\textbf{Shang-Yuan Shiu.} Department of Mathematics, 
	National Central University,
	No.\ 300, Jhongda Rd., Jhongli City, Taoyuan County 32001, Taiwan (R.O.C.)
	\texttt{shiu@math.ncu.edu.tw}
\end{small}

\end{document}